\documentclass[reqno]{amsart}

\usepackage{color}


\usepackage[english, activeacute]{babel}
\usepackage{amsmath,amsthm,amsxtra}
\usepackage{fix-cm}
\usepackage{graphics}
\usepackage{graphicx}	
\usepackage{fancyhdr}
\usepackage{float}
\usepackage{anysize}
\usepackage{thmtools}
\usepackage{amssymb,mathrsfs}
\usepackage{arydshln}
\usepackage{enumerate}
\numberwithin{equation}{section}

\usepackage{mathtools}
\usepackage{epsfig}
\usepackage{amssymb}
\usepackage{float}
\usepackage{latexsym}
\usepackage{amsfonts}
\usepackage{hyperref}

\usepackage{xfrac}
\usepackage[capitalize]{cleveref}

\newtheorem{theorem}{Theorem}[section]
\newtheorem*{theorem*}{Theorem}
\newtheorem{corollary}[theorem]{Corollary}

\newtheorem{lemma}[theorem]{Lemma}
\newtheorem{proposition}[theorem]{Proposition}

\theoremstyle{definition}
\newtheorem{definition}[theorem]{Definition}
\newtheorem{remark}[theorem]{Remark}

\newtheorem{example}[theorem]{Example}

\title[Smooth semi-Lipschitz functions and almost isometries of Finsler manifolds]{Smooth semi-Lipschitz functions and \\ almost isometries between Finsler manifolds}
\author{Aris Daniilidis}
\address{Departamento de Ingenier\'ia Matem\'atica, Universidad de Chile, CMM CNRS UMI 2807 (CONICYT AFB 170001), Av. Beauchef 851, Santiago, Chile}
\email{arisd@dim.uchile.cl}

\author{Jesus A. Jaramillo}
\address{Instituto de Matem\'atica Interdiscliplinar (IMI) and Departamento de An\'alisis Matem\'atico\\
	Universidad Complutense de Madrid\\
	28040-Madrid\\
	Spain}
\email{jaramil@mat.ucm.es}
\author{Francisco Venegas M.}
\address{Departamento de Ingenier\'ia Matem\'atica, Universidad de Chile, CMM CNRS UMI 2807 (CONICYT AFB 170001), Av. Beauchef 851, Santiago, Chile}
\email{fvenegas@dim.uchile.cl}

\thanks{The research of A. Daniilidis is supported by the research grants FONDECYT 1171854, CMM-CONICYT AFB 170001, ECOS-CONICYT C18E04 (Chile) and PGC2018-097960-B-C22 (Spain). The research of J. A. Jaramillo is supported in part by grant PGC2018-097286-B-I00 (Spain). The research of F. Venegas is supported by the CONICYT Doctorate Fellowship PFCHA/DOCTORADO NACIONAL/2019--21191167 and the FONDECYT Grant 1171854 (Chile).}
\subjclass[2010]{Primary 54E40, 54C65.}
\keywords{Finsler manifold, semi-Lipschitz function, smooth function, almost isometry}

\begin{document}
\begin{abstract} The convex cone $SC_{\mathrm{SLip}}^1(\mathcal{X})$ of real-valued smooth semi-Lipschitz functions on a Finsler manifold $\mathcal{X}$ is an order-algebraic structure that captures both the differentiable and the quasi-metric feature of $\mathcal{X}$. In this work we show that the subset of smooth semi-Lipschitz functions of constant strictly less than $1$, denoted $SC_{1^{-}}^1(\mathcal{X})$, can be used to classify Finsler manifolds and to characterize \textit{almost isometries} between them, in the lines of the classical Banach-Stone and Mykers-Nakai theorems.
\end{abstract}
\maketitle

\section{Introduction}
Starting with the classical Banach-Stone Theorem, there is a long and fruitful line of research whose aim is to characterize the topological (respectively, metric, smooth) structure of a given space $X$ in terms of an algebraic or topological-algebraic structure on the space $C(X)$ of all real-valued continuous functions on $X$, or on a suitable subspace of $C(X)$.  We refer to the survey \cite{GJ} and references therein for further information about this subject. \smallskip
	
A variant of the preceding results is the so-called Myers-Nakai Theorem, stating that the Riemannian structure of a Riemannian manifold $\mathcal{X}$ is determined by the natural normed algebra $C^1_b(\mathcal{X})$ of all bounded $C^1$-smooth real functions on $\mathcal{X}$ with bounded derivative on $\mathcal{X}$. This result was initially proved by Myers~\cite{M} for compact manifolds and then extended by Nakai~\cite{N} to the general case. 
\smallskip

Let us recall that according to the Myers-Steenrod Theorem, the Riemannian structure of a Riemannian manifold is characterized in purely metric terms by its associated distance, in the sense that a bijection between two Riemann manifolds is a Riemann isometry if and only if it is a metric isometry for the corresponding Riemannian distances. The Myers-Steenrod Theorem has been extended to Finsler manifolds by Deng and Hou~\cite{DH}, and the results of the aforementioned work have in turn be used in~\cite{GJR-10} to extend the Myers-Nakai Theorem from the setting of Riemann manifolds to the one of {\it reversible} Finsler manifolds (see \cite[Theorem~3.1]{GJR-10} or forthcoming Theorem~\ref{4-thm3.1}). 
\smallskip

In this work we will focus on the case of general ({\it non-reversible}) Finsler manifolds (Definition~\ref{def-Finsler mnfd}). In this case, the associated distance is only a {\it quasi-metric}, in the sense that it does not need to be symmetric (see Definition~\ref{defqm}). In this setting, a natural class of transformations, considered in \cite{JLP}, are the so-called {\it almost isometries} (Definition~\ref{def-saliso}(i)) which are bijections between quasi-metric spaces that preserve the triangular functions. We recall that for a quasi-metric space $(X, d_{X})$ the associated {\it triangular function} is defined by $$Tr_X(x_1, x_2, x_3) := d_{X}(x_1, x_2) + d_{X}(x_2, x_3) - d_{X}(x_1, x_3),\quad\text{for all }x_1,x_2,x_3\in X,$$
and measures, in a sense, how far the involved points are from achieving equality in the triangle inequality. Every {\it isometry} (that is, a distance-preserving mapping) is an almost isometry and if the distances are symmetric (which is the case for reversible Finsler manifolds, or more generally, for metric spaces) the two classes coincide. The difference between isometries and almost isometries is illustrated as follows (see forthcoming Proposition~\ref{ai} for a more general formulation): a bijection  $\tau: \mathcal{X} \to \mathcal{Y}$ between Finsler manifolds is an almost isometry for the respective associated distances $d_{\mathcal{X}}$ and $d_{\mathcal{Y}}$ if, and only if, there exists a smooth function $\phi:\mathcal{X}\to\mathbb R$ (which is unique up to an additive constant) such that:
	\begin{equation}\label{qi}
	d_{\mathcal{Y}}(\tau (x_1), \tau (x_2)) = d_{\mathcal{X}} (x_1, x_2) + \phi (x_1) - \phi (x_2).
	\end{equation}
\smallskip
\emph{Strict almost isometries} form an intermediate class between isometries and almost isometries: an almost isometry $\tau:X\to Y$ between the quasi-metric spaces $(X, d_{X})$ and $(Y, d_{Y})$ is called {\it strict} if for some $c \geq 1$ it holds
$$
 {c}^{-1} \, d_{X}(x, x') \leq d_{Y}(\tau(x), \tau (x')) \leq c \, d_{X}(x, x'), \quad \text{  for all  } x, x'\in X.
$$

In the purely metric setting, a functional characterization of almost isometries between quasi-metric spaces has been obtained in \cite{CJ} using the {\it convex lattice structure} of the space of {\it (backward) semi-Lipschitz} functions (Definition~\ref{def-SL}(ii)) with semi-Lipschitz constant at most~$1$. In the smooth setting (whenever $\mathcal{X}$ has also a structure of a smooth manifold), we should naturally consider semi-Lipschitz functions that are additionally $\mathcal{C}^1$-smooth. However, this reveals an intrinsic difficulty, since no subclass of smooth functions can be given a lattice structure (differentiability is lost when taking suprema or infima). Moreover, it is more natural to consider {\it forward} (rather than backward) {\it semi-Lipschitz} functions ({\it c.f.} Definition~\ref{def-SL}(i)), since for these functions the semi-Lipschitz constant coincides with supremum of the asymmetric norms of their derivatives (Corollary~\ref{Slip-dif}). Last, but not least, for reasons that we shall figure out later, the natural morphisms between general (non-reversible, non-compact) Finsler manifolds are the strict almost isometries, rather than almost isometries. (In Proposition~\ref{compact} we shall see that every almost isometry between compact Finsler manifolds is in fact strict.) 
\smallskip

Resuming the above, given a Finsler manifold $\mathcal{X}$, we shall work with the structure $SC^1_{1^-}(\mathcal{X}, d_{\mathcal{X}})$ of all $C^1$-smooth, (forward) semi-Lipschitz functions on $\mathcal{X}$ with semi-Lipschitz constant strictly less than~$1$, considered as a {\it partially ordered convex set}. Our main result, in this work, is to establish a functional characterization of strict almost isometries between Finsler manifolds, in terms of {\it convex-order isomorphisms} between the class of smooth {\it (forward) semi-Lipschitz} functions of constant strictly less than~1 (or, equivalently, smooth functions for which the supremum of the asymmetric norm of their derivatives is strictly less than~1.) 
\smallskip

Indeed, given a strict almost isometry $\tau:\mathcal{X} \to \mathcal{Y}$ between Finsler manifolds, it turns out (see  Proposition~\ref{isomorph}) that the function $\phi:\mathcal{X}\to\mathbb R$ associated to $\tau$ in the sense of equation \eqref{qi} determines a bijection $T:SC^1_{1^-}(\mathcal{Y}, d_{\mathcal{Y}})\rightarrow SC^1_{1-}(\mathcal{X},d_{\mathcal{X}})$ in the following way: $f\mapsto Tf= f\circ\tau+\phi$. It is clear that this map preserves both order and convex combinations. In the sequel we refer to this mapping as an {\em isomorphism of convex partially ordered sets}.

\smallskip

In the opposite direction, let us note that there are three main types of natural isomorphism between convex partially ordered sets, namely: \smallskip
	\begin{itemize}
		\item $T_1: SC^1_{1^-}(\mathcal{Y}, d_{\mathcal{Y}})\rightarrow SC^1_{1^-}(\mathcal{X},d_{\mathcal{X}}), T_1f=f\circ\tau$,
		where $\tau$ is an isometry. \smallskip
		\item $T_2:SC^1_{1^-}(\mathcal{X},d_{\mathcal{X}})\rightarrow SC^1_{1^-}(\mathcal{X}, c \, d_{\mathcal{X}}), T_2f=c \cdot f$,
		where $c \in(0,\infty)$. \smallskip
		\item $T_3:SC^1_{1^-}(\mathcal{X},d'_{\mathcal{X}})\rightarrow SC^1_{1^-}(\mathcal{X},d_{\mathcal{X}}),
		T_3f=f+\phi$, where $d'(x,x')=d(x,x')+\phi(x)-\phi(x')$.
	\end{itemize}
\smallskip
	In Theorem~\ref{teo} we show that given two connected, second countable and bicomplete Finsler manifolds $\mathcal{X}$ and $\mathcal{Y}$, every isomorphism of convex partially ordered sets $T:SC^1_{1^-}(\mathcal{Y},d_{\mathcal{Y}})\rightarrow SC^1_{1^-}(\mathcal{X},d_{\mathcal{X}})$  is, in fact, a composition of one of each kind: $T$ is of the form $Tf=c \cdot (f\circ\tau) +\phi$.
\medskip

	\subsection{Organization of the paper}
	In Section~\ref{prelim} we recall definitions and previous results, regarding quasi-metric spaces, Finsler manifolds and semi-Lipschitz functions. In Section~\ref{main} we present the proof of our main result as well as several consequences.

	\section{Preliminaries}\label{prelim}
	In this article, we denote by $\mathbb{R}$ the set of real numbers. For any two numbers $s,t\in \mathbb{R}$, we denote by $s\vee t$ (respectively, $s\wedge t$) the maximum (respectively, the minimum) of $s$ and $t$.
	\subsection{Quasi-metric spaces}\hfill

We start by recalling the definition of a quasi-metric space. (The reader should be advertised that this terminology is not universal: some authors consider variants of this definition allowing the quasi-metric to take negative values and/or the value $+\infty$.)
	\begin{definition}[Quasi-metric space]\label{defqm}
		A \emph{quasi-metric space} is a pair $(X,d)$, where $X$ is a nonempty set and ${d:X\times X\to [0,\infty)}$ is a function satisfying:
		\begin{enumerate}
			\renewcommand\labelenumi{(\roman{enumi})}
			
			\item $d(x,x)=0$ for all $x\in X$. \smallskip
			\item $d(x,y)=d(y,x)=0$ implies $x=y$ for all $x,y\in X$. \smallskip
			\item $d(x,y)\leq d(x,z)+d(z,y)$ for any $x,y,z\in X$.\smallskip
		\end{enumerate}
		Condition (iii) corresponds to the \emph{triangular inequality}. Replacing (ii) by the stronger condition \smallskip
		\begin{enumerate}
			\item [(ii)']  $d(x,y)=0\implies x=y$, \smallskip
		\end{enumerate}
		we get the definition of a \emph{$T_1$-quasi-metric space}.
	\end{definition}

A quasi-metric space need not be $T_2$ (neither $T_1$), but its symmetrization satisfies both properties, since it yields a metric space.
	\begin{definition}[Symmetrized distance]\label{deftopo}
		For a quasi-metric space $(X,d)$, the \emph{reverse quasi-metric} $\bar{d}$ is defined by $\bar{d}(x,y)=d(y,x)$, and the \emph{symmetrized distance} $d^s$ is defined by ${d^s(x,y)=d(x,y)\vee d(y,x)}$. Clearly, $\bar{d}$ is a quasi-metric and $d^s$ is a metric.
   \end{definition}
	The following definition describes the topologies that are naturally associated to a quasi-metric space.
\begin{definition}[Topologies of a quasi-metric space]\label{def-top}
To each quasi-metric space $(X,d)$ we can associate three ``natural'' topologies:
	\begin{enumerate}
		\renewcommand\labelenumi{(\roman{enumi})}
		\item the \emph{forward topology} $\mathcal{T}(d)$, generated by the
		family of open \emph{forward}-balls \\ $\{B_{d}(x,r)\hbox{\rm :}\ x\in X,$
		$r>0\},$ with ${B_{d}(x,r)=\{y\in X\hbox{\rm :}\ d(x,y)<r\}}$ for
		any $x\in X$ and $r>0.$ \medskip
		\item the \emph{backward topology} $\mathcal{T}(\bar{d})$, generated by the family of
        \emph{backward}-balls:\\ ${B_{\bar{d}}(x,r)=\{y\in X\hbox{\rm :}\ d(y,x)<r\}}$ for any $x\in X$ and $r>0.$ \medskip
		\item the \emph{symmetric topology} $\mathcal{T}(d^s)$, which is the metric topology induced by the distance $d^s$,\\ or equivalently, by the family ${\{B_{d}(x,r)\cap B_{\bar{d}}(x,r)\hbox{\rm :}\ x\in X,r>0\}}$.
   \end{enumerate}
\end{definition}

	\smallskip
There are several ways to consider a notion of completeness for a quasi-metric space, the most forward one being the usual (metric) completeness of the symmetrized (metric) space. Following terminology of the recent literature, we refer to this notion as \emph{bicompleteness} of the quasi-metric space.

	\begin{definition}[Bicompleteness]\cite{C}
		A quasi-metric space $(X,d)$ is said to be \emph{bicomplete} if the metric space $(X,d^s)$ is complete.
	\end{definition}
	\medskip
We shall now define the notion of an \emph{almost isometry} (a weaker notion than mere \emph{isometry}) to identify structure of quasi-metric spaces, which is based on the notion of triangular function.
	\begin{definition}[Triangular function]\label{defai}
Let $(X,d)$ be a quasi-metric space. The \emph{triangular function} $\mathrm{Tr}_X:X\times X \times X\to [0,+\infty)$ (associated to the quasi-metric space $X$) is defined by $${\mathrm{Tr}_X(x_1,x_2,x_3)=d(x_1,x_2)+d(x_2,x_3)-d(x_1,x_3)}.$$
\end{definition}

\begin{definition}[(strict) almost isometries]\label{def-saliso}		
A bijection $\tau:X\to Y$ between the quasi-metric spaces $(X,d_X)$ and $(Y,d_Y)$ is called: 
\begin{itemize}
\item[(i)] an \emph{almost isometry}, if it preserves the respective triangular functions, that is 
\begin{equation}\label{eq:alm-iso} 
\mathrm{Tr}_Y(\tau(x_1),\tau(x_2),\tau(x_3))=\mathrm{Tr}_X(x_1,x_2,x_3), \quad\text{for all }x_1,x_2,x_3\in X
\end{equation}
\item[(ii)]
a \emph{strict almost isometry}, if it satisfies \eqref{eq:alm-iso} and there exists a constant $c \geq 1$ such that
$$
 \frac{1}{c} \, d_{X}(x_1, x_2) \leq d_{Y}(\tau(x_1), \tau (x_2)) \leq c \, d_{X}(x_1, x_2) \quad \text{  for all  } x_1, x_2\in X.
$$
\item[(iii)] an \emph{isometry}, if $d_X(x_1,x_2)=d_Y(\tau(x_1),\tau(x_2))$ for any $x_1,x_2\in X$.
\end{itemize}
	\end{definition}
	Clearly, every isometry is a (strict) almost isometry, and in metric spaces every almost isometry is in fact an isometry and the three notions above coincide. The following characterization of almost-isometries was obtained in \cite[Proposition~2.8]{JLP}.
	\medskip
	\begin{proposition}[Characterization of almost isometries]\label{ai}
Given quasi-metric spaces $(X,d_X)$ and $(Y,d_Y)$, a bijection $\tau:X\to Y$ is an almost isometry if and only if there exists a function $\phi:X\to \mathbb{R}$ such that for any $x_1,x_2\in X$
		$$d_Y(\tau(x_1),\tau(x_2))=d_X(x_1,x_2)+\phi(x_1)-\phi(x_2).$$
		Moreover, the function $\phi$ can be determined up to an additive constant by $$\phi(x)=d_Y(\tau(x),\tau(x_0)) - d_X(x,x_0), \quad \text{for any fixed } x_0\in X. $$
	\end{proposition}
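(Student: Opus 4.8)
\smallskip
\noindent\textbf{Proof proposal.} The plan is to establish the two implications separately. For the ``if'' direction, assuming the existence of $\phi$, I would simply substitute the hypothesised identity
\[
d_Y(\tau(x_i),\tau(x_j))=d_X(x_i,x_j)+\phi(x_i)-\phi(x_j)
\]
into $\mathrm{Tr}_Y(\tau(x_1),\tau(x_2),\tau(x_3))=d_Y(\tau(x_1),\tau(x_2))+d_Y(\tau(x_2),\tau(x_3))-d_Y(\tau(x_1),\tau(x_3))$; the $\phi$-contributions $\bigl(\phi(x_1)-\phi(x_2)\bigr)+\bigl(\phi(x_2)-\phi(x_3)\bigr)-\bigl(\phi(x_1)-\phi(x_3)\bigr)$ telescope to $0$, and what remains is exactly $\mathrm{Tr}_X(x_1,x_2,x_3)$. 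Hence $\tau$ satisfies \eqref{eq:alm-iso}, i.e. it is an almost isometry.

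For the ``only if'' direction, assume $\tau$ is an almost isometry, fix $x_0\in X$, and \emph{define} $\phi(x):=d_Y(\tau(x),\tau(x_0))-d_X(x,x_0)$; this is a genuine real-valued function since $d_X$ and $d_Y$ take finite nonnegative values, and clearly $\phi(x_0)=0$. Now I would invoke \eqref{eq:alm-iso} with the particular triple $(x_1,x_2,x_0)$, which gives $\mathrm{Tr}_Y(\tau(x_1),\tau(x_2),\tau(x_0))=\mathrm{Tr}_X(x_1,x_2,x_0)$. Expanding both triangular functions and moving every term other than $d_Y(\tau(x_1),\tau(x_2))$ and $d_X(x_1,x_2)$ to the appropriate side leaves
\[
d_Y(\tau(x_1),\tau(x_2))-d_X(x_1,x_2)=\bigl(d_Y(\tau(x_1),\tau(x_0))-d_X(x_1,x_0)\bigr)-\bigl(d_Y(\tau(x_2),\tau(x_0))-d_X(x_2,x_0)\bigr),
\]
which is precisely $d_Y(\tau(x_1),\tau(x_2))=d_X(x_1,x_2)+\phi(x_1)-\phi(x_2)$.

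For the final clause, if $\phi$ and $\psi$ both satisfy the displayed functional equation then $\phi(x_1)-\phi(x_2)=\psi(x_1)-\psi(x_2)$ for all $x_1,x_2\in X$, so $x\mapsto\phi(x)-\psi(x)$ is constant; in particular the functions produced above for two different base points differ by an additive constant, and $\phi$ is uniquely pinned down after a normalisation such as $\phi(x_0)=0$. I do not anticipate any genuine obstacle here: the argument is an elementary manipulation of the triangle-defect functional, and the only ``idea'' involved is to collapse the ternary function $\mathrm{Tr}$ to a binary object by freezing its last slot at a fixed base point, which is exactly what makes the cocycle-type expression $\phi(x_1)-\phi(x_2)$ emerge.
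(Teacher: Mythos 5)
Your proof is correct. The paper does not actually prove this proposition --- it is quoted from \cite[Proposition~2.8]{JLP} without proof --- but your argument (telescoping of the $\phi$-terms for the ``if'' direction, and freezing the last slot of $\mathrm{Tr}$ at a base point $x_0$ to extract $\phi$ for the ``only if'' direction, plus the observation that any two solutions differ by a constant) is the standard, complete derivation of this result.
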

	\medskip
The forthcoming notion of \emph{(forward/backward) semi-Lipschitz} function consists of the class of natural real-valued morphisms defined on a quasi-metric space, that capture its structure. We recall the definition below.

\begin{definition}[Semi-Lipschitz function]\label{def-SL}
Let $(X,d)$ be a quasi-metric space. \smallskip \\
(i). A function $f:X\to \mathbb{R}$ is called \emph{forward semi-Lipschitz} (or simply \emph{semi-Lipschitz}) if there exists $L\geq 0$ such that 	 
\begin{equation}\label{eq:fSL}
f(y)-f(x)\leq L\,d(x,y),\quad\text{for all } x,y\in X. 
\end{equation}
The infimum of the above constants $L>0$ is called the \emph{(forward) semi-Lipschitz constant} of $f$, that is,
		$$\|f|_S:=\sup_{d(x,y)>0}\frac{f(y)-f(x)}{d(x,y)}.$$
We denote by $\mathrm{SLip}(X,d)$ (or simply, $\mathrm{SLip}(X)$) the set of (forward) semi-Lipschitz functions on $(X,d)$.
\smallskip \newline

\noindent (ii). A function $f:X\to \mathbb{R}$ is said to be \emph{backward semi-Lipschitz} if $-f$ is (forward) semi-Lipschitz, or equivalently, if there exists $L\geq 0$ such that $f(x)-f(y)\leq L\,d(x,y)$, for all $x,y\in X.$ The infimum of the above constants $L>0$ is called the \emph{(backward) semi-Lipschitz constant} of $f$ that is,
		$$|f\|_S:=\sup_{d(x,y)>0}\frac{f(x)-f(y)}{d(x,y)}.$$
Notice that $f$ is backward semi-Lipschitz on $(X,d)$ if and only if $f$ is (forward) semi-Lipschitz on $(X,\bar d)$ (the reverse quasi-metric). Therefore, we shall denote by $\mathrm{SLip}(X,\bar d)$ (or simply, $\mathrm{SLip(\bar{X}}$) the set of backward semi-Lipschitz functions on $(X,d)$.
\smallskip \newline

\noindent (iii). A function $f:X\to \mathbb{R}$ is Lipschitz, if there exists $L\geq 0$ such that for any $x,y\in X$ $$|f(x)-f(y)|\leq Ld(x,y).$$
The \emph{Lipschitz constant} of $f$ is defined by
$$ \|f\|_{\mathrm{Lip}}:=\sup_{d(x,y)>0}\frac{|f(x)-f(y)|}{d(x,y)}=\max\{\|f|_S,\,|f\|_S\}.$$
	\end{definition}

\begin{remark} (i). We use the notation $\|\cdot|_S$ (with double bar on the left and only one bar on the right) and respectively, $|\cdot\|_S$, to indicate that this is an asymmetric quantity, in the sense that even if both $f$ and $-f$ are semi-Lipschitz, we typically have $\|f|_S\neq \|-f|_S$. Notice that if $(X,d)$ is a metric space, semi-Lipschitz and Lipschitz functions are the same, and $\|f|_S = |f\|_S=\|f\|_{\mathrm{Lip}}$. In a quasi-metric space, a function $f$ is Lipschitz if and only if both $f$ and $-f$ are semi-Lipschitz. In this case, the Lipschitz constant of $f$ is the maximum of the semi-Lipschitz constants of $f$ an $-f$.  \smallskip\\
(ii). Canonical examples of (forward) semi-Lipschitz functions that are not Lipschitz are functions of the form $d(x_0, \cdot)$ on certain type of quasi-metric space (see \cite[Example~2.3]{CJ} \textit{e.g}).\smallskip \\
(iii). A semi-Lipschitz function on a quasi-metric space $(X,d)$ might not be ``forward continuous" (that is, continuous with respect to the forward topology, see Definitiion~\ref{def-top}(i)). However, it is Lipschitz (and therefore continuous) on the associated symmetrized metric space $(X,d^s)$. 
	\end{remark}

We now give a simple characterization of strict almost isometries, which will be useful in the sequel.

\begin{proposition}[Characterization of strict almost isometries]\label{sai}
Let $\tau:X\to Y$ be an almost isometry between the quasi-metric spaces $(X,d_X)$ and $(Y,d_Y)$. Let  $\phi:X\to \mathbb{R}$ and $\psi:Y\to \mathbb{R}$ be the functions associated to $\tau$ and respectively, to $\tau^{-1}$ in the sense of Proposition~\ref{ai}, that is,
		$$ d_Y(\tau(x_1),\tau(x_2))=d_X(x_1,x_2)+\phi(x_1)-\phi(x_2), $$
        $$ d_X(\tau^{-1}(y_1),\tau^{-1}(y_2))=d_Y(y_1,y_2)+\psi(y_1)-\psi(y_2).$$
Then $\tau$ is a strict almost isometry if, and only if, $\|\phi|_S <1$ and $\|\psi|_S <1$.
\end{proposition}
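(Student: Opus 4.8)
The plan is to translate both the distortion inequality of Definition~\ref{def-saliso}(ii) and the quantities $\|\phi|_S$, $\|\psi|_S$ into statements about one family of quotients, namely $\frac{\phi(x_2)-\phi(x_1)}{d_X(x_1,x_2)}$ (and its analogue for $\psi$), and then read off the equivalence directly. Throughout I will use that the inverse of an almost isometry is again an almost isometry — substituting $x_i=\tau^{-1}(y_i)$ in \eqref{eq:alm-iso} shows that $\tau^{-1}$ preserves $\mathrm{Tr}$ — so the hypothesis legitimately attaches a function $\psi$ to $\tau^{-1}$, and moreover if $\tau$ satisfies the distortion inequality with constant $c$ then so does $\tau^{-1}$, with the same $c$ (just substitute $x_i=\tau^{-1}(y_i)$ in the two-sided inequality and rearrange).

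For necessity, assume $\tau$ is strict with constant $c\ge 1$. For $x_1,x_2\in X$ with $d_X(x_1,x_2)>0$, rewrite the defining relation for $\tau$ as $\phi(x_2)-\phi(x_1)=d_X(x_1,x_2)-d_Y(\tau(x_1),\tau(x_2))$ and insert $d_Y(\tau(x_1),\tau(x_2))\ge c^{-1}d_X(x_1,x_2)$ to get $\frac{\phi(x_2)-\phi(x_1)}{d_X(x_1,x_2)}\le 1-c^{-1}$; taking the supremum over such pairs gives $\|\phi|_S\le 1-c^{-1}<1$. Running the very same computation for $\tau^{-1}$ (using the defining relation for $\tau^{-1}$ and its distortion inequality with the same $c$) yields $\|\psi|_S\le 1-c^{-1}<1$.

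For sufficiency, assume $\|\phi|_S<1$ and $\|\psi|_S<1$. From $\phi(x_2)-\phi(x_1)\le \|\phi|_S\,d_X(x_1,x_2)$ together with the defining relation for $\tau$, I would deduce $d_Y(\tau(x_1),\tau(x_2))=d_X(x_1,x_2)-(\phi(x_2)-\phi(x_1))\ge(1-\|\phi|_S)\,d_X(x_1,x_2)$ for all $x_1,x_2$ (the case $d_X(x_1,x_2)=0$ being trivial, as distances are nonnegative). The same estimate applied to $\tau^{-1}$ gives $d_X(\tau^{-1}(y_1),\tau^{-1}(y_2))\ge(1-\|\psi|_S)\,d_Y(y_1,y_2)$, and specializing $y_i=\tau(x_i)$ turns this into the upper bound $d_Y(\tau(x_1),\tau(x_2))\le(1-\|\psi|_S)^{-1}d_X(x_1,x_2)$. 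Then $c:=\max\{1,(1-\|\phi|_S)^{-1},(1-\|\psi|_S)^{-1}\}\ge 1$ witnesses the two-sided inequality, so $\tau$ is strict.

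I do not expect a serious obstacle: the content is elementary and the argument is essentially symmetric in $\tau$ and $\tau^{-1}$. The only point requiring genuine care is the asymmetry bookkeeping — keeping track of which argument of $d$ carries $+\phi$ and which carries $-\phi$, so that the quotient that shows up in each estimate is exactly the one defining $\|\phi|_S$ (and not $\|-\phi|_S$ or $|\phi\|_S$) — together with the harmless but necessary separate treatment of the degenerate pairs with $d_X(x_1,x_2)=0$ (equivalently $d_Y(\tau(x_1),\tau(x_2))=0$).
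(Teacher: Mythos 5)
Your proof is correct and follows essentially the same route as the paper: both directions rewrite the defining relation for $\phi$ (and for $\psi$, via $\tau^{-1}$) as a quotient estimate against $d_X$ (resp.\ $d_Y$) and read off the bound $1-c^{-1}$ one way and $c=(1-\alpha)^{-1}$ the other. The only difference is cosmetic — the paper bounds both constants by a single $\alpha<1$ and dispatches the second inequality with ``follows in the same way,'' whereas you carry out the $\psi$/$\tau^{-1}$ computation explicitly and take a max of the resulting constants.
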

\begin{proof}
Suppose first that $\tau:X\to Y$ is a strict almost isometry, and consider $c > 1$ such that
$$
 c^{-1} \, d_{X}(x, x') \leq d_{Y}(\tau(x), \tau (x')) \leq c \, d_{X}(x, x') \quad \text{  for all  } x, x'\in X.
$$
Since $\phi(x')-\phi(x) =  d_X(x,x') - d_Y(\tau(x),\tau(x')),$ whenever $d_{X}(x, x')>0$ we have that
$$
\frac{\phi(x')-\phi(x)}{d_X(x,x')} = 1 - \frac{d_Y(\tau(x),\tau(x'))}{d_X(x,x')} \leq 1 - c^{-1}.
$$
Thus $\|\phi|_S \leq 1 - c^{-1} <1$. By considering $\tau^{-1}$, we also obtain that $\|\psi|_S \leq 1 - c^{-1} <1$.
\smallskip

Conversely, let $0< \alpha <1$ such that $\|\phi|_S \leq \alpha$ and $\|\varphi|_S  \leq \alpha$. Then for $d_{X}(x, x')>0$ we have that
$$
\frac{d_Y(\tau(x),\tau(x'))}{d_X(x,x')} = 1 - \frac{\phi(x')-\phi(x)}{d_X(x,x')} \geq 1 - \alpha = \frac{1}{c},
$$
where $c= (1-\alpha)^{-1}$. The other inequality follows in the same way.
\end{proof}

Semi-Lipschitz functions (respectively, backward semi-Lipschitz functions) are stable with respect to the max/min operations. We have in particular the following definition.

\begin{definition}[The convex lattice $\mathrm{SLip}_1(\bar{X})$]
Let $(X,d)$ be a quasi-metric space. The space of backward semi-Lipschitz functions with backward semi-Lipschitz constant less or equal to 1 is denoted by
$$\mathrm{SLip}_1(\bar X)=\{f:X\to \mathbb{R}\::\:f(x)-f(y)\leq d(x,y)\}\,\left(=\{f:X\to \mathbb{R}\::\:f(y)-f(x)\leq \bar d(x,y)\}\,\right).$$
	\end{definition}
	\noindent It is not difficult to check that given $f,g\in \mathrm{SLip}_1(\bar X)$, both their supremum $f\vee g$ and their infimum $f\wedge g$ belong to $\mathrm{SLip}_1(\bar X)$, so $\mathrm{SLip}_1(\bar X)$ has a natural lattice structure. Furthermore, it is also closed under convex combinations. Thus, following \cite{CJ}, we say that $\mathrm{SLip}_1(\bar X)$ has a \emph{convex lattice} structure. If $(Y,\rho)$ is another quasi-metric space, we say that a bijection $T:\mathrm{SLip}_1(\bar Y)\to \mathrm{SLip}_1(\bar X)$ is a \emph{convex lattice isomorphism} if $T$ preserves both order and convex combinations, that is, \smallskip
	\begin{itemize}
		\item $Tf\geq Tg$ if and only if $f\geq g$ for all $f,g\in \mathrm{SLip}_1(\bar Y)$, and \smallskip
		\item $T(\lambda f+(1-\lambda)g)=\lambda Tf +(1-\lambda)Tg$ for all $f,g\in \mathrm{SLip}_1(\bar Y)$ and $\lambda \in [0,1]$.
	\end{itemize}

\begin{remark}
Note that any order-preserving bijection between lattices is automatically a lattice isomorphism, so any convex lattice isomorphism satisfies $T(f\wedge g)=Tf\wedge Tg$ and $T(f\vee g)=Tf\vee Tg$ for all $f,g\in \mathrm{SLip}_1(\bar Y)$.
\end{remark}
	\medskip

	The following result, taken from \cite[Theorem 3.1]{CJ}, reveals the importance of the convex lattice structure $\mathrm{SLip}_1(\bar X)$ for the study of the quasi-metric structure of a bicomplete quasi-metric space.

	\begin{theorem}[representation of almost isometries between quasi-metric spaces]\label{4-thm3.1}
		Let $(X,d)$ and $(Y,\rho)$ be bicomplete quasi-metric spaces, and let  ${T:\mathrm{SLip}_1(\bar Y)\to \mathrm{SLip}_1(\bar X)}$ be a convex lattice isomorphism. Then there exist $\alpha>0$, an homeomorphism ${\tau:(\bar X,d)\to (\bar Y,\rho)}$ and a quasi-metric $d'$ on $X$, such that \smallskip
		\begin{itemize}
			\item $(X,d)$ and $(X,d')$ are almost-isometric, and $d'(x,x')=d(x,x')+T0(x')-T0(x).$ \smallskip
			\item $\tau:(X,\alpha \cdot d')\to (Y,\rho)$ is an isometry. \smallskip
			\item For every $f\in \mathrm{SLip}_1(\bar Y)$ we have that $Tf=c\cdot(f\circ \tau)+\phi$, where $c={\alpha}^{-1}$ and $\phi=T0$.
		\end{itemize}
	\end{theorem}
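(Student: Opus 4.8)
The plan is to imitate the classical Banach--Stone/Kaplansky scheme: first remove the ambiguity coming from $T0$ by changing the quasi-metric, then remove the scaling ambiguity by analysing how $T$ acts on constants, then recover the underlying points of $X$ and $Y$ from the order-and-convexity structure alone, and finally read off the quasi-metric. \textbf{Step 1 (normalization).} Set $\phi:=T0\in\mathrm{SLip}_1(\bar X,d)$. Since $\phi(x)-\phi(x')\le d(x,x')$, the function $d'(x,x'):=d(x,x')+\phi(x')-\phi(x)$ is nonnegative and is readily checked to be a quasi-metric, with $\mathrm{id}\colon(X,d)\to(X,d')$ an almost isometry with associated function $\phi$ in the sense of Proposition~\ref{ai}. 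Moreover $f\in\mathrm{SLip}_1(\bar X,d')\iff f+\phi\in\mathrm{SLip}_1(\bar X,d)$, so $g\mapsto g-\phi$ is an order- and convex-combination-preserving bijection onto $\mathrm{SLip}_1(\bar X,d')$; hence $\tilde T:=(\,\cdot\,-\phi)\circ T$ is a convex lattice isomorphism $\mathrm{SLip}_1(\bar Y,\rho)\to\mathrm{SLip}_1(\bar X,d')$ with $\tilde T0=0$. It thus suffices to prove the statement assuming $T0=0$ (with $d$ replaced by $d'$); we do so below and undo the normalization at the end, recovering $\phi=T0$ and $c=\alpha^{-1}$.

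\textbf{Step 2 (constants and partial linearity).} From $T0=0$ and preservation of convex combinations, $T(\lambda f)=\lambda Tf$ for $\lambda\in[0,1]$; and whenever $f,g,f+g\in\mathrm{SLip}_1(\bar Y,\rho)$, writing $\tfrac12(f+g)$ simultaneously as $\tfrac12 f+\tfrac12 g$ and as $\tfrac12(f+g)+\tfrac12\cdot 0$ gives $T(f+g)=Tf+Tg$. Taking $g$ constant (constants can always be added) yields $T(f+c)=Tf+Tc$ for all $c\in\mathbb R$, hence $T(qc)=qTc$ for every $q\in\mathbb Q$ by the same convexity trick. Then $w:=T1$ satisfies $nw\in\mathrm{SLip}_1(\bar X,d')$ for all $n\in\mathbb N$, which forces the forward semi-Lipschitz constant of $w$ to be $0$, i.e. $w$ is a constant $\beta$; monotonicity and injectivity of $T$ give $\beta>0$. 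A sandwich over rationals yields $T(c)=\beta c$ for all $c\in\mathbb R$ and therefore $T(f+c)=Tf+\beta c$.

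\textbf{Step 3 (recovering the points --- the main obstacle).} For $x\in X$, the evaluation $\delta_x\colon\mathrm{SLip}_1(\bar X,d')\to\mathbb R$ is monotone, affine on convex combinations, satisfies $\delta_x(f\wedge g)=\delta_xf\wedge\delta_xg$ and $\delta_x(f\vee g)=\delta_xf\vee\delta_xg$ (recall that an order-preserving bijection between lattices automatically preserves $\wedge$ and $\vee$), and fixes constants. The crucial point --- and the only place where \emph{bicompleteness} is genuinely used --- is the converse: every $\Phi\colon\mathrm{SLip}_1(\bar X,d')\to\mathbb R$ which is monotone, affine, lattice-preserving and fixes constants is an evaluation $\delta_x$ at a unique $x\in X$. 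The natural argument attaches to $\Phi$ the numbers $\Phi\bigl(-d'(x_0,\cdot)\bigr)\le 0$ (note $-d'(x_0,\cdot)\in\mathrm{SLip}_1(\bar X,d')$), shows that a maximizing sequence for $x_0\mapsto\Phi(-d'(x_0,\cdot))$ is $(d')^s$-Cauchy, uses bicompleteness to obtain a limit $x^*$, and then identifies $\Phi$ with $\delta_{x^*}$ by testing against the ``cone'' functions $t+d'(\cdot,a)$ and $s-d'(a,\cdot)$ --- respectively the largest and the smallest element of $\mathrm{SLip}_1(\bar X,d')$ with prescribed value at $a$, hence detectable purely order-theoretically. (On an incomplete space one would instead pick up the evaluations at the bicompletion, exactly the classical Banach--Stone obstruction, which is why bicompleteness is indispensable.) Granting this lemma, $\delta_x\circ T$ is such a functional on $\mathrm{SLip}_1(\bar Y,\rho)$ --- it sends the constant $c$ to $\beta c$, so it is non-degenerate --- whence $\delta_x\circ T=\beta\,\delta_{\tau(x)}$ for a unique $\tau(x)\in Y$, i.e. $(Tf)(x)=\beta\,f(\tau(x))$ for all $f$ and $x$. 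Applying the same to $T^{-1}$ produces the inverse map, so $\tau\colon X\to Y$ is a bijection and $Tf=\beta\,(f\circ\tau)$.

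\textbf{Step 4 (recovering the quasi-metric and conclusion).} Since $Tf=\beta\,(f\circ\tau)\in\mathrm{SLip}_1(\bar X,d')$, we have $\beta\bigl(f(\tau x)-f(\tau x')\bigr)\le d'(x,x')$ for all $f\in\mathrm{SLip}_1(\bar Y,\rho)$; taking the supremum over such $f$, which is attained at $f=\rho(\cdot,\tau x')$, gives $\beta\,\rho(\tau x,\tau x')\le d'(x,x')$, and the symmetric estimate applied to $T^{-1}=\beta^{-1}(\,\cdot\circ\tau^{-1})$ gives the reverse inequality. Hence $\rho(\tau x,\tau x')=\beta^{-1}d'(x,x')$: with $\alpha:=\beta^{-1}$, the bijection $\tau\colon(X,\alpha d')\to(Y,\rho)$ is an isometry, in particular a homeomorphism for the associated topologies (and, $d$ and $d'$ being almost isometric, for the topologies of $d$ as well). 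Undoing Step~1, the original $T$ satisfies $Tf=c\,(f\circ\tau)+\phi$ with $c=\alpha^{-1}$ and $\phi=T0$, while $d'(x,x')=d(x,x')+T0(x')-T0(x)$ and $(X,d),(X,d')$ are almost isometric. This gives the three asserted conclusions. The only serious difficulty is Step~3 (the intrinsic recovery of points, via the ``nice functionals are evaluations'' lemma and its Cauchy/bicompleteness argument); Steps~1, 2 and 4 are essentially formal once that is available.
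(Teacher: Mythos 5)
There is a genuine gap, and it sits exactly where you flagged the ``only serious difficulty''. (For context: the paper does not prove this statement at all --- it quotes it from \cite{CJ}; but both \cite{CJ} and the paper's own proof of the smooth analogue, Theorem~\ref{teo}, follow the same overall architecture as yours, so the comparison is meaningful.) Your Steps 1, 2 and 4 are correct and essentially formal, as you say. The problem is the key lemma of Step 3: it is \emph{false} that on a bicomplete space every monotone, affine, finite-lattice-preserving, constant-fixing functional $\Phi$ on $\mathrm{SLip}_1(\bar X,d')$ is an evaluation. Take $X=\mathbb{N}$ with $d(n,m)=2$ for $n\neq m$, a complete (hence bicomplete) metric space. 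Every $f\in\mathrm{SLip}_1(\bar X)$ then has oscillation at most $2$, hence is bounded, so $\Phi(f):=\lim_{\mathcal{U}}f(n)$ along a free ultrafilter $\mathcal{U}$ on $\mathbb{N}$ is well defined; it is monotone, linear, preserves finite suprema and infima, and fixes constants, yet for every $k$ it differs from $\delta_k$ (test it against the function equal to $2$ at $k$ and $0$ elsewhere). Your sketched argument collapses at its first move on this example: $\Phi(-d(a,\cdot))=-2$ for every $a$, so the supremum of $a\mapsto\Phi(-d'(a,\cdot))$ is not $0$, every sequence is maximizing, and no Cauchy condition is forced. Bicompleteness does \emph{not} exclude these ``evaluations at infinity''; this is the same obstruction as nonprincipal characters on $C_b(X)$ for noncompact $X$, and it survives completion.

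The actual proofs avoid claiming that each $\delta_x\circ T$ is an evaluation. Instead they encode the topology order-theoretically: to each $f\geq h$ one attaches the regular open set $\mathrm{int}\bigl(\overline{\{f>h\}}\bigr)$, shows that inclusion of such sets is expressible through the relation $\sqsubset_h$ built purely from order and convex combinations (Proposition~\ref{inc}), so that $T$ induces an inclusion-preserving bijection between bases of the two topologies, and then invokes the Key Lemma of \cite{CC11} (Lemma~\ref{lema6}) to produce a homeomorphism $\tau$ only between \emph{dense} subsets $X'\subset X$ and $Y'\subset Y$. The pointwise formula $Tf(x)=c\,f(\tau(x))+\phi(x)$ and the isometry are first proved on these dense subsets, and completeness enters only at the very end, to extend the isometry to all of $X$ and $Y$. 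This is precisely the device that sidesteps ultrafilter-type functionals: one never evaluates $T$ at an arbitrary point, only at points delivered by the Key Lemma. To repair your argument you would either need to restrict Step 3 to a dense set of $x$ by some such mechanism, or strengthen the hypotheses of your lemma (e.g.\ exploiting that $\Phi$ arises from a \emph{bijection} $T$, or imposing a normality/continuity condition) enough to kill the counterexample; neither is routine, so as written the proof is incomplete.
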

	Therefore, two bicomplete quasi-metric spaces are almost isometric up to a multiplicative constant whenever the respective spaces of $1$-backward semi-Lipschitz functions are \emph{isomorphic as convex lattices}, and that the isomorphism is a composition operator associated with the almost isomerty. Notice that (forward) semi-Lipschitz functions with semi-Lipschitz constant less or equal to 1 form readily an analogous convex lattice structure. In particular, the above theorem can be readily restated in a completely analogous way in terms of the lattices $\mathrm{SLip}_1(X)$ and $\mathrm{SLip}_1(Y)$. \smallskip

In this work we establish a result of similar flavor to the above, in case that the quasi-metric spaces are \emph{Finsler manifolds} (see forthcoming Definition~\ref{def-Finsler mnfd}). The structure that is naturally associated to this study are (forward) smooth semi-Lipschitz functions. As already mentioned in the introduction, the main difficult in this framework is that the operations $f\wedge g$ and $f\vee g$ are not compatible with differentiability and, as a consequence, we do no longer have a lattice structure.

	\medskip
	\subsection{Finsler manifolds}
	\begin{definition}[Minkowski norm]
		Let $V$ be a finite-dimensional real vector space. A functional $F:V\to [0,+\infty)$ is called a \emph{Minkowski norm} on $V$ if the following conditions are satisfied:\smallskip
		\begin{enumerate}
			\renewcommand\labelenumi{(\roman{enumi})}
			\item Positive homogeneity: $F(\lambda v)=\lambda F(v)$ for every $v\in V$ and $\lambda \geq 0$.\smallskip
			\item Regularity: $F$ is continuous on $V$ and $C^\infty$-smooth on $V\setminus \{0\}$.\smallskip
			\item Strong convexity: for every $v\in V\setminus \{0\}$, the quadratic form associated to the second derivative of the function $F^2$ at $v$, that is, $$g_v=\frac{1}{2}d^2[F^2](v),$$
			is positive definite on $V$.
		\end{enumerate}
Every Minkoweki norm satisfies in addition the following conditions (see \cite[Theorem~1.2.2]{BCS} \textit{e.g.}):\smallskip
\begin{enumerate}
 			\item[(iv)] Positivity: $F(v)=0$ if and only if $v=0$. \smallskip
			\item[(v)] Triangle inequality: $F(u+v)\leq F(u)+F(v)$, for every $u,v\in V$.
\end{enumerate}
 It is clear that every norm associated to an inner product is a Minkowski norm.
		In general, a Minkowski norm does not need to be symmetric, and there are indeed very interesting examples of asymmetric Minkowski norms, such as, for example, Randers spaces (\cite{BCS}) or more generally Finsler manifolds.   \smallskip\newline
We say $F$ is \emph{symmetric} (or \emph{absolutely homogeneous}) if
		$$F(\lambda v)=|\lambda|F(v)\text{ for any }\lambda\in \mathbb{R} \text{ and }v \in V.$$
		In this case, $F$ is a norm in the usual sense.
	\end{definition}
	\medskip
	\begin{definition}[Finsler manifold]\label{def-Finsler mnfd}
		A Finsler manifold is a pair $(\mathcal{X},F)$ such that $\mathcal{X}$ is a finite-dimensional $C^\infty$-smooth manifold and $F:T\mathcal{X}\to [0,\infty)$ is a continuous function defined on the tangent bundle $T\mathcal{X}$, satisfying \smallskip
		\begin{enumerate}
			\renewcommand\labelenumi{(\roman{enumi})}
			
			\item $F$ is a $C^\infty$-smooth on $T\mathcal{X}\setminus\{0\}.$ \smallskip
			\item For every $x\in \mathcal{X}$, $F(x,\cdot):T_x\mathcal{X}\to [0,\infty)$ is a Minkowski norm on the tangent space $T_x\mathcal{X}$.
		\end{enumerate}
	\end{definition}

		The Finsler structure $F$ is said to be \emph{reversible} if, for every $x\in \mathcal{X}$, $F(x,\cdot)$ is symmetric. Clearly, any Riemannian manifold is a reversible Finsler manifold, where the symmetric Minkowski norm on each tangent space is given by an inner product.
	\begin{definition}[Finsler distance $d_F$]
		Let $(\mathcal{X},F)$ be a connected Finsler manifold. The \emph{Finsler distance} $d_F$ on $\mathcal{X}$ is defined by
		$$d_F(x,y)=\inf\{\ell_F(\sigma)\,:\,\sigma \text{ is a piecewise }C^1\text{ path from }x\text{ to }y\},$$
		where the Finsler length of a piecewise $C^1$ path $\sigma:[a,b]\to \mathcal{X}$ is defined as:
		$$\ell_F(\sigma)=\int_a^bF(\sigma(t),\dot{\sigma}(t))dt,$$
		\noindent where $\dot{\sigma}$ is the derivative of $\sigma$.
		The Finsler distance $d_F$ is a $T_1$-quasi-metric on $\mathcal{X}$ for any connected Finsler manifold $(\mathcal{X},F)$ (see \textit{e.g.} \cite[Section~6.2]{BCS}).
	\end{definition}
\medskip
\begin{remark}[Topology of a Finsler manifold]\label{topfinsler}
	Even if the forward and backward distances of a connected Finsler manifold $\mathcal{X}$ differ, they do induce the same topology on $\mathcal{X}$, which coincides with the manifold topology (see \cite[Chapter~6.2]{BCS}). Therefore, for Finsler manifolds, the three topologies of Definition~\ref{deftopo} are the same.
\end{remark}

\medskip
\begin{definition}[Finsler isometry]
	A mapping $\tau:(\mathcal{X},F)\to(\mathcal{Y},G)$ between Finsler manifolds is said to be a \emph{Finsler isometry} if it is a diffeomorphism which preserves the Finsler structure, that is, for every $x\in \mathcal{X}$ and every $v\in T_x\mathcal{X}$:
	$$F(x,v)=G(\tau(x),d\tau(x)(v)).$$
\end{definition}

A classical result due to Myers and Steenrod \cite{MS} asserts that a mapping between Riemannian manifolds is a Riemannian isometry if and only if it is a metric isometry for the corresponding Riemannian distances. This was extended by Deng and Hou in \cite{DH} to the context of Finsler manifolds:
\medskip
\begin{theorem}[Characterization of isometries for Finsler manifolds]
	Let $(\mathcal{X},F)$ and $(\mathcal{Y},G)$ be connected Finsler manifolds. Then $\tau:(\mathcal{X},F)\to(\mathcal{Y},G)$ is a Finsler isometry if and only if it is bijective and an isometry for the corresponding Finsler distances.
\end{theorem}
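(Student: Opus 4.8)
The plan is to prove the two implications separately: the direct one is routine, while the converse is the Finsler analogue of the Myers--Steenrod theorem \cite{MS}, proved in \cite{DH}, and I would follow the adaptation of the classical argument to the (possibly non-reversible) setting. For necessity, suppose $\tau$ is a Finsler isometry. Then $\tau$ is in particular a diffeomorphism, hence bijective, and for a piecewise $C^1$ path $\sigma\colon[a,b]\to\mathcal X$ the path $\tau\circ\sigma$ is piecewise $C^1$ with $\frac{d}{dt}(\tau\circ\sigma)(t)=d\tau(\sigma(t))(\dot\sigma(t))$, so $G(\tau\sigma(t),\frac{d}{dt}(\tau\circ\sigma)(t))=F(\sigma(t),\dot\sigma(t))$ and therefore $\ell_G(\tau\circ\sigma)=\ell_F(\sigma)$. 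Since $\sigma\mapsto\tau\circ\sigma$ is a length-preserving bijection from the piecewise $C^1$ paths joining $x$ to $x'$ onto those joining $\tau(x)$ to $\tau(x')$, passing to the infimum gives $d_G(\tau(x),\tau(x'))=d_F(x,x')$; that is, $\tau$ is a distance isometry.

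For sufficiency, assume $\tau$ is a bijection with $d_G(\tau(x),\tau(x'))=d_F(x,x')$ for all $x,x'$. Then $\tau$ also preserves the symmetrized distances, hence is an isometry between $(\mathcal X,d_F^s)$ and $(\mathcal Y,d_G^s)$ and, by Remark~\ref{topfinsler}, a homeomorphism for the manifold topologies. The first key point I would establish is that $\tau$ carries minimizing geodesics to minimizing geodesics: a curve parametrized proportionally to $F$-arclength on $[0,\ell]$ is a minimizing geodesic exactly when $d_F(\sigma(s),\sigma(t))=c\,(t-s)$ for $0\le s\le t\le\ell$, a condition visibly transferred by $\tau$ to $\tau\circ\sigma$; that $\tau\circ\sigma$ is then an honest, and hence (by the regularity theory of length minimizers, \cite{BCS}) $C^\infty$, minimizing geodesic is what the argument rests on. Fixing $x\in\mathcal X$ and $y=\tau(x)$, for small $v\in T_x\mathcal X$ the geodesic $t\mapsto\exp_x(tv)$ is mapped to a minimizing geodesic issuing from $y$, necessarily of the form $t\mapsto\exp_y(tw)$ with $G(y,w)=F(x,v)$; extending $v\mapsto w$ by positive homogeneity defines a bijection $L_x\colon T_x\mathcal X\to T_y\mathcal Y$ with $G(y,L_xv)=F(x,v)$ and $\tau\circ\exp_x=\exp_y\circ L_x$ near $0$.

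Next I would show $L_x$ is linear, and then deduce smoothness. In the chart given by $\exp_x$ the pulled-back Finsler metric is continuous and equals $F_x:=F(x,\cdot)$ at the origin, which gives $\lim_{t\to0^+}t^{-1}d_F(\exp_x(tu),\exp_x(tv))=F_x(v-u)$ for all $u,v$; applying $\tau$ together with the analogous limit at $y$ yields $G_y(L_xv-L_xu)=F_x(v-u)$ for all $u,v$, where $G_y:=G(y,\cdot)$. Since $F_x$ is strongly convex, equality in its triangle inequality forces the metric midpoint of $u$ and $v$ to be unique and to coincide with the linear midpoint $m=\tfrac12(u+v)$; then $G_y(L_xm-L_xu)=G_y(L_xv-L_xm)=\tfrac12G_y(L_xv-L_xu)$ together with strong convexity of $G_y$ forces $L_x(\tfrac12(u+v))=\tfrac12(L_xu+L_xv)$, and with positive homogeneity this gives additivity and oddness, so $L_x$ is a linear Minkowski-norm isometry of $(T_x\mathcal X,F_x)$ onto $(T_y\mathcal Y,G_y)$. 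Finally, since $\exp_x$ is a homeomorphism near $0$ and $C^\infty$ off the origin, $\tau=\exp_y\circ L_x\circ\exp_x^{-1}$ is $C^\infty$ on a punctured neighborhood of $x$; running the same construction at points $x'$ near $x$, for which $x$ lies in the smooth locus of $\exp_{x'}$, upgrades this to $C^\infty$ at $x$ itself, with $d\tau(x)=L_x$ preserving the Finsler norms. Thus $\tau$ is smooth with everywhere-invertible differential preserving $F$; being a bijection, it is a diffeomorphism, i.e. a Finsler isometry.

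The step I expect to be the main obstacle is the differentiability-plus-linearity argument, and in particular the fact that the Finsler exponential map is only $C^1$ at the zero section (it is $C^2$ only when the metric is Riemannian), which blocks the naive Riemannian bootstrap and is precisely why one passes through the midpoint-convexity argument above rather than differentiating directly. A secondary complication, absent in the Riemannian case, is non-reversibility: throughout one must consistently orient geodesics and carefully distinguish $d_F$ from its reverse $\bar d_F$, since forward and backward geodesics no longer coincide.
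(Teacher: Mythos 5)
The paper does not prove this theorem: it is quoted verbatim as the Finsler extension of the Myers--Steenrod theorem established by Deng and Hou in \cite{DH}, so there is no internal proof to compare your attempt against. Judged on its own terms, your sketch is a faithful reconstruction of the standard Myers--Steenrod line as adapted in \cite{DH}: the easy direction via invariance of the length functional under the induced bijection of path spaces is fine, and for the converse you correctly isolate the two load-bearing inputs, namely (a) the regularity theory guaranteeing that a continuous curve satisfying the metric equality $d_F(\sigma(s),\sigma(t))=c\,(t-s)$ is a genuine $C^\infty$ minimizing geodesic, and (b) the Busemann--Mayer-type limit $\lim_{t\to0^+}t^{-1}d_F(\exp_x(tu),\exp_x(tv))=F_x(v-u)$, which survives the fact that the Finsler exponential map is only $C^1$ at the zero section. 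Your midpoint argument for linearity of $L_x$ is sound: strict convexity of the Minkowski norms forces uniqueness of metric midpoints, midpoint preservation plus positive homogeneity yields additivity and oddness, and the two-centre trick recovers smoothness of $\tau$ at the base point. The only points I would flag are that both (a) and (b) are themselves nontrivial lemmas that you are importing rather than proving, and that in the non-reversible case the orientation conventions in (b) (which argument of $d_F$ comes first, and whether one works with $F_x(v-u)$ or $F_x(u-v)$) must be kept consistent throughout, as you note; with those understood, the proposal is correct and is essentially the argument of \cite{DH}.
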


A weaker result, established in \cite{JLP} (see Lemma~3.1 and Proposition~3.2 therein), holds for almost isometries. Given a diffeomorphism $\tau:\mathcal{X}\to \mathcal{Y}$ and a Finsler structure $F$ on $\mathcal{X}$, we denote by $\tau_*(F)$ the Finsler structure on $\mathcal{Y}$ obtained as the push-forward of $F$ by $\tau$, that is, for every $y\in \mathcal{Y}$ and every $w\in T_y\mathcal{Y}$:
$$
\tau_*(F)(y,w)=F(\tau^{-1}(y), d\tau^{-1}(y)(w)).
$$

\begin{proposition}[Characterization of almost isometries for Finsler manifolds]\label{tausmooth}
	Let $(\mathcal{X},F)$ and $(\mathcal{Y},G)$ be connected Finsler manifolds, and let $\tau:\mathcal{X}\to \mathcal{Y}$ be an almost isometry induced by a function $\phi:\mathcal{X}\to \mathbb{R}$ (in the sense of Proposition~\ref{ai}). Then $\tau$ and $\phi$ are smooth, and $G=\tau_*(F) - d(\phi\circ \tau^{-1})$. Conversely, if $G=\tau_*(F) - d(\phi\circ \tau^{-1})$, then $\tau$ is an almost isometry.
\end{proposition}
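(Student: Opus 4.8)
The plan is to prove the two implications separately: the converse is a short computation with Finsler lengths, while the direct implication rests on the (considerably deeper) fact — due to \cite{JLP} — that an almost isometry between Finsler manifolds is automatically smooth, after which the identity $G=\tau_*(F)-d(\phi\circ\tau^{-1})$ is again obtained by a length computation. \emph{For the converse}, assume $\tau$ is a diffeomorphism, $\phi\in C^\infty(\mathcal{X})$, and $G=\tau_*(F)-d(\phi\circ\tau^{-1})$, and set $\psi:=\phi\circ\tau^{-1}\in C^\infty(\mathcal{Y})$. For a piecewise $C^1$ path $\sigma:[a,b]\to\mathcal{Y}$ the chain rule gives $G(\sigma(t),\dot\sigma(t))=F\big((\tau^{-1}\!\circ\sigma)(t),\tfrac{d}{dt}(\tau^{-1}\!\circ\sigma)(t)\big)-\tfrac{d}{dt}\psi(\sigma(t))$, hence, integrating over $[a,b]$,
\[
\ell_G(\sigma)=\ell_F(\tau^{-1}\!\circ\sigma)-\psi(\sigma(b))+\psi(\sigma(a)).
\]
Since $\sigma\mapsto\tau^{-1}\!\circ\sigma$ is a bijection between the piecewise $C^1$ paths joining $y_1$ to $y_2$ and those joining $\tau^{-1}(y_1)$ to $\tau^{-1}(y_2)$, taking infima and using $\psi\circ\tau=\phi$ we get $d_G(\tau x_1,\tau x_2)=d_F(x_1,x_2)+\phi(x_1)-\phi(x_2)$ (with $x_i=\tau^{-1}(y_i)$), so $\tau$ is an almost isometry induced by $\phi$ by Proposition~\ref{ai}.

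\emph{For the direct implication}, suppose $d_G(\tau x_1,\tau x_2)=d_F(x_1,x_2)+\phi(x_1)-\phi(x_2)$ for all $x_1,x_2\in\mathcal{X}$. Applying $d_G\ge 0$ to the pairs $(\tau x_1,\tau x_2)$ and $(\tau x_2,\tau x_1)$ yields $-d_F(x_1,x_2)\le\phi(x_1)-\phi(x_2)\le d_F(x_2,x_1)$, so $|\phi(x_1)-\phi(x_2)|\le d_F^s(x_1,x_2)$ and $d_G(\tau x_1,\tau x_2)\le d_F(x_1,x_2)+d_F(x_2,x_1)\le 2\,d_F^s(x_1,x_2)$; symmetrizing and then doing the same with $\tau^{-1}$, $\tau$ is a bi-Lipschitz homeomorphism between the symmetrized metric spaces, hence a homeomorphism for the manifold topologies (Remark~\ref{topfinsler}), and $\phi$ is continuous. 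As an almost isometry $\tau$ preserves the triangular function and thus maps minimizing geodesic segments to minimizing geodesic segments ($\mathrm{Tr}_X(x_1,x_2,x_3)=0$ iff $x_2$ lies on a minimizing geodesic from $x_1$ to $x_3$). Starting from these two facts and using essentially the regularity and strong convexity of the Minkowski norms, \cite[Lemma~3.1]{JLP} shows that $\tau$ is in fact a $C^\infty$-diffeomorphism; this is the one genuinely hard step, and I would either invoke it or reproduce that argument — it is here that one must leave the purely metric setting and exploit the differentiable structure.

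Granting that $\tau$ is a $C^\infty$-diffeomorphism, the smoothness of $\phi$ follows from its intrinsic description: by Proposition~\ref{ai}, for any base point $x_0$ one has $\phi=d_G(\tau(\cdot),\tau(x_0))-d_F(\cdot,x_0)$ up to an additive constant; given $p\in\mathcal{X}$ choose $x_0\neq p$ in a sufficiently small punctured neighborhood of $p$, so that $x\mapsto d_F(x,x_0)$ is $C^\infty$ near $p$ and $y\mapsto d_G(y,\tau(x_0))$ is $C^\infty$ near $\tau(p)$ (a Finsler distance function is $C^\infty$ off the base point and its cut locus); composing the latter with the diffeomorphism $\tau$ gives $\phi\in C^\infty$ near $p$, and since $p$ was arbitrary, $\phi\in C^\infty(\mathcal{X})$.

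Finally, with $\tau$ and $\phi$ smooth, fix any $C^1$ path $\sigma:[a,b]\to\mathcal{X}$. Passing to finer partitions of $[a,b]$ in the almost-isometry relation and telescoping the $\phi$-terms yields $\ell_G(\tau\circ\sigma)=\ell_F(\sigma)+\phi(\sigma(a))-\phi(\sigma(b))$; since $\phi$ is $C^1$ this equals $\int_a^b\big(F(\sigma(t),\dot\sigma(t))-d\phi_{\sigma(t)}(\dot\sigma(t))\big)\,dt$, whereas $\ell_G(\tau\circ\sigma)=\int_a^b G\big(\tau\sigma(t),d\tau_{\sigma(t)}(\dot\sigma(t))\big)\,dt$ by the chain rule. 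As this holds for every $C^1$ path — in particular on arbitrarily short subintervals — the two integrands agree at every point, i.e. $G(\tau x,d\tau_x v)=F(x,v)-d\phi_x(v)$ for all $(x,v)\in T\mathcal{X}$; substituting $(x,v)=(\tau^{-1}y,\,d\tau^{-1}_y w)$ and using $d(\phi\circ\tau^{-1})_y=d\phi_{\tau^{-1}(y)}\circ d\tau^{-1}_y$ gives exactly $G=\tau_*(F)-d(\phi\circ\tau^{-1})$. The main obstacle, to repeat, is the smoothness assertion; the remaining steps are routine bookkeeping with Finsler lengths and the chain rule.
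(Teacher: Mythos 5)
Your argument is sound and matches the paper's treatment: the paper states this proposition without proof, attributing it to \cite{JLP} (Lemma~3.1 and Proposition~3.2 therein), and you correctly isolate the smoothness of $\tau$ as the one genuinely non-routine step and defer it to exactly that reference. The surrounding work — the chain-rule/length computation for the converse, the recovery of $\phi$'s smoothness from $\phi=d_G(\tau(\cdot),\tau(x_0))-d_F(\cdot,x_0)$, and the telescoping-partition argument identifying the integrands to get $G=\tau_*(F)-d(\phi\circ\tau^{-1})$ — is correct.
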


\medskip
In what follows, for simplicity, the term Finsler manifold will also refer to the pair $(\mathcal{X},d_{\mathcal{X}})$, where $(\mathcal{X},F)$ is a Finsler Manifold and $d_{\mathcal{X}}$ is the Finsler distance induced by $F$.
\medskip


\subsection{Smooth semi-Lipschitz functions}
We shall now introduce a class of real-valued functions that is naturally associated to Finsler manifolds.

\begin{definition}[The convex partially ordered set $SC^1_{1^-}(\mathcal{X})$]
Let $(\mathcal{X},d_{\mathcal{X}})$ be a connected Finsler manifold. The space of $C^1$-smooth (forward) semi-Lipschitz functions with semi-Lipschitz constant \emph{strictly} less than $1$ will be denoted by
$$SC_{1^{-}}^1(\mathcal{X}):=\{f\in C^1(X,\mathbb{R})\::\: \|f|_S<1\}.$$
\end{definition}

When the Finsler manifold $(\mathcal{X},d_{\mathcal{X}})$ is reversible, we write $C_{1^-}^1(\mathcal{X})$ instead of $SC_{1^-}^1(\mathcal{X})$.\smallskip

The set $SC_{1^-}^1(\mathcal{X})$ (respectively, the set $C_{1^-}^{1}(\mathcal{X})$ in the reversible case) is convex and partially ordered, but in contrast to $\mathrm{SLip}_1(\mathcal{X})$, it is not a lattice, since differentiability is lost when taking suprema and infima. Therefore, for the study of Finsler manifolds, we shall consider the structure $SC_{1-}^1(\mathcal{X})$ as a \emph{convex partially ordered} set. We shall now define the notion of isomorphism for the aforementioned structures.

\begin{definition}[Isomorphism between convex partially ordered sets]
Given connected Finsler manifolds $(\mathcal{X},d_{\mathcal{X}})$ and $(\mathcal{Y},d_{\mathcal{Y}})$, we say that a bijection $$T:SC_{1^-}^1(\mathcal{Y})\to SC_{1^-}^1(\mathcal{X})$$ is an \emph{isomorphism of convex partially ordered sets} if \smallskip
\begin{itemize}
    \item[(i)] $Tf\geq Tg$ if and only if $f\geq g$ for all $f,g\in SC_{1^-}^1(\mathcal{Y})$, and \smallskip
    \item[(ii)] $T(\lambda f+(1-\lambda)g)=\lambda Tf +(1-\lambda)Tg$ for all $f,g\in SC_{1^-}^1(\mathcal{Y})$ and $\lambda \in [0,1]$.
\end{itemize}
\end{definition}

We shall now define the norm and the asymmetric norm of the derivative $df(x)$ of a smooth function $f\in C^1(\mathcal{X})$, at a point $x$ of  a Finsler manifold $\mathcal{X}$.

\begin{definition}[Norm and asymmetric norm of the derivative $df(x)$]
Let $(\mathcal{X},F)$ be a connected Finsler manifold and $f:\mathcal{X}\to \mathbb{R}$ a $C^1$-smooth function. The norm of the derivative of $f$ at the point $x\in \mathcal{X}$ is defined by:
$$\|df(x)\|_F=\sup\{|df(x)(v)|\::\:v\in T_x\mathcal{X},\:F(x,v)\leq1\}.$$

\smallskip

In the same way, the asymmetric norm of $df(x)$ is defined by:
$$\|df(x)|_F=\sup\{df(x)(v)\::\:v\in T_x\mathcal{X},\:F(x,v)\leq1\}.$$
\end{definition}

It is clear that, in the case of a reversible Finsler manifold, the norm and the asymmetric norm of $df(x)$ coincide. In general, we have that $\|df(x)|_F \leq \|df(x)\|_F$.
\medskip

It is proved in \cite[Theorem 5]{GJR-13} that, for a $C^1$-smooth function $f$ defined on a connected Finsler manifold, the Lipschitz constant of $f$ coincides with the supremum of the norm of its derivative. In fact, the same proof of \cite[Theorem 5]{GJR-13} gives also the corresponding one-sided result:

\smallskip

\begin{proposition}[$\|f|_{S}=\|df|_{S, \infty}$]
Let $(\mathcal{X},F)$ be a connected Finsler manifold and $f:\mathcal{X}\to \mathbb{R}$ a $C^1$-smooth function. Then $$ \|f\|_{\mathrm{Lip}}=\|df\|_\infty:=\sup\{\|df(x)\|_F\::\:x\in \mathcal{X}\}\in [0,\infty],$$ where $$\|f\|_{\mathrm{Lip}}=\displaystyle\sup_{x\neq y}\frac{|f(x)-f(y)|}{d_F(x,y)}\quad\text{is the Lipschitz constant of } f.$$
Similarly,
$$ \|f|_{S}=\|df|_{S, \infty}:=\sup\{\|df(x)|_F\::\:x\in \mathcal{X}\}\in [0,\infty],$$
where $$\|f|_{S}=\displaystyle\sup_{x\neq y}\frac{f(y)-f(x)}{d_F(x,y)}\quad\text{ is the semi-Lipschitz constant of } f.$$
\end{proposition}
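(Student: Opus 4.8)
The plan is to establish the one-sided identity $\|f|_S = \|df|_{S,\infty}$ by proving the two inequalities separately, and then to recover the Lipschitz identity $\|f\|_{\mathrm{Lip}} = \|df\|_\infty$ from it. Indeed, $\|f\|_{\mathrm{Lip}} = \max\{\|f|_S,\,\|{-f}|_S\}$ while $\|df(x)\|_F = \max\{\|df(x)|_F,\,\|d(-f)(x)|_F\}$ for every $x\in\mathcal{X}$, so applying the one-sided statement to $f$ and to $-f$ and taking suprema over $x$ gives the two-sided one. (Equivalently, that last identity is precisely \cite[Theorem~5]{GJR-13}.) Throughout, all four quantities are read in $[0,\infty]$, with the value $+\infty$ allowed.

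\emph{Inequality $\|f|_S \le \|df|_{S,\infty}$.} Put $M := \|df|_{S,\infty}$; if $M=\infty$ there is nothing to prove, so assume $M<\infty$. Fix $x,y\in\mathcal{X}$ and a piecewise $C^1$ path $\sigma:[a,b]\to\mathcal{X}$ from $x$ to $y$. Splitting $[a,b]$ into subintervals on which $\sigma$ is $C^1$ and applying the fundamental theorem of calculus to $f\circ\sigma$ on each of them, one gets $f(y)-f(x) = \int_a^b df(\sigma(t))(\dot\sigma(t))\,dt$. Since $df(\sigma(t))$ is linear and $F(\sigma(t),\cdot)$ is positively homogeneous, the definition of the asymmetric norm gives, pointwise, $df(\sigma(t))(\dot\sigma(t)) \le \|df(\sigma(t))|_F\,F(\sigma(t),\dot\sigma(t)) \le M\,F(\sigma(t),\dot\sigma(t))$. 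Hence $f(y)-f(x) \le M\int_a^b F(\sigma(t),\dot\sigma(t))\,dt = M\,\ell_F(\sigma)$. Taking the infimum over all such paths $\sigma$ yields $f(y)-f(x)\le M\,d_F(x,y)$, and then the supremum over pairs with $d_F(x,y)>0$ yields $\|f|_S\le M$.

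\emph{Inequality $\|df|_{S,\infty}\le\|f|_S$.} We may assume $\|f|_S<\infty$. Fix $x\in\mathcal{X}$ and $v\in T_x\mathcal{X}$, and pick any $C^1$ curve $\gamma:(-\varepsilon,\varepsilon)\to\mathcal{X}$ with $\gamma(0)=x$ and $\dot\gamma(0)=v$. For small $t>0$, the restriction $\gamma|_{[0,t]}$ is a path from $x$ to $\gamma(t)$, so $d_F(x,\gamma(t)) \le \int_0^t F(\gamma(s),\dot\gamma(s))\,ds$; dividing by $t$ and using that $t^{-1}\int_0^t F(\gamma(s),\dot\gamma(s))\,ds \to F(x,v)$ by continuity, we obtain $\limsup_{t\to0^+} t^{-1}d_F(x,\gamma(t)) \le F(x,v)$. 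On the other hand, for every $t$ we have $f(\gamma(t))-f(x) \le \|f|_S\,d_F(x,\gamma(t))$ — trivially when $\gamma(t)=x$, and from the definition of $\|f|_S$ otherwise, since $d_F$ is a $T_1$-quasi-metric so $\gamma(t)\ne x$ forces $d_F(x,\gamma(t))>0$. Dividing by $t>0$ and letting $t\to0^+$, the left-hand side tends to $df(x)(v)$ while the $\limsup$ of the right-hand side is at most $\|f|_S\,F(x,v)$ (recall $\|f|_S\ge0$); hence $df(x)(v)\le\|f|_S\,F(x,v)$. Taking the supremum over $v$ with $F(x,v)\le1$ gives $\|df(x)|_F\le\|f|_S$, and the supremum over $x\in\mathcal{X}$ gives $\|df|_{S,\infty}\le\|f|_S$.

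\emph{Main obstacle.} There is no essential difficulty here; the points deserving care are (i) writing the pointwise bound $df(x)(v)\le\|df(x)|_F\,F(x,v)$ without absolute values, since $F$ is only positively homogeneous and the asymmetric norm $\|df(x)|_F$ is genuinely asymmetric; (ii) noticing that in the second inequality one needs only the upper estimate $\limsup_{t\to0^+}t^{-1}d_F(x,\gamma(t))\le F(x,v)$, which follows immediately from the length of the comparison curve, so that no first-variation or short-geodesic argument is required; and (iii) the routine bookkeeping of the value $+\infty$, which may genuinely occur for $\|f|_S$ on a non-compact Finsler manifold.
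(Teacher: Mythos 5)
Your proof is correct and complete; the paper itself omits the argument, deferring to \cite[Theorem~5]{GJR-13} with the remark that the same proof yields the one-sided statement, and your two inequalities (integrating $df$ along paths for $\|f|_S\le\|df|_{S,\infty}$, differentiating along curves through $x$ for the converse) are exactly the standard argument that citation refers to. The points you flag as needing care --- the absence of absolute values in the asymmetric pointwise bound, the $T_1$ property guaranteeing $d_F(x,y)>0$ for $x\neq y$, and the reduction of the two-sided identity to the one-sided one via $\|df(x)\|_F=\max\{\|df(x)|_F,\|d(-f)(x)|_F\}$ --- are handled correctly.
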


\smallskip
As a direct consequence we obtain the following alternative description of $SC_{1^{-}}^1(\mathcal{X})$:

\begin{corollary}[The convex partially ordered set $SC_{1^{-}}^1(\mathcal{X})$]\label{Slip-dif}
Let $(\mathcal{X},d_{\mathcal{X}})$ be a connected Finsler manifold. Then
$$SC_{1^{-}}^1(\mathcal{X})=\{f\in C^1(X,\mathbb{R})\::\: \|f|_S<1\}=\{f\in C^1(X,\mathbb{R})\::\: \|df|_{S, \infty}<1\} .$$
\end{corollary}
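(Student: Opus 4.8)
The plan is to recognize that the statement is essentially a restatement of the preceding Proposition. The first equality, $SC^1_{1^-}(\mathcal{X})=\{f\in C^1(\mathcal{X},\mathbb{R}):\|f|_S<1\}$, is nothing but the definition of $SC^1_{1^-}(\mathcal{X})$, so there is nothing to prove. For the second equality I would simply invoke the preceding Proposition, which asserts that for every $C^1$-smooth $f$ on a connected Finsler manifold $(\mathcal{X},F)$ one has the identity $\|f|_S=\|df|_{S,\infty}$ as elements of $[0,\infty]$. Since these two extended reals coincide, the strict inequality $\|f|_S<1$ holds if and only if $\|df|_{S,\infty}<1$; hence the two sets on the right-hand side are equal, and both coincide with $SC^1_{1^-}(\mathcal{X})$. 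That is the entire argument.

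Thus the only substantive content sits in the Proposition, which I am allowed to assume. Were I to reprove it directly, I would argue in two directions. For $\|f|_S\le\|df|_{S,\infty}$: given $x\neq y$ and a piecewise $C^1$ path $\sigma:[a,b]\to\mathcal{X}$ from $x$ to $y$, the chain rule yields $f(y)-f(x)=\int_a^b df(\sigma(t))(\dot\sigma(t))\,dt\le\int_a^b\|df(\sigma(t))|_F\,F(\sigma(t),\dot\sigma(t))\,dt\le\|df|_{S,\infty}\,\ell_F(\sigma)$; taking the infimum over all such paths gives $f(y)-f(x)\le\|df|_{S,\infty}\,d_F(x,y)$, and then the supremum over $x\neq y$. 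For the reverse inequality $\|df|_{S,\infty}\le\|f|_S$: fix $x\in\mathcal{X}$ and a vector $v\in T_x\mathcal{X}$ with $F(x,v)\le 1$ nearly attaining $\|df(x)|_F$; working in a chart, follow the short curve $t\mapsto x+tv$ (read in coordinates), whose Finsler length is $t(1+o(1))$ as $t\to 0^+$ by continuity of $F$, and along which $f$ increases at rate $df(x)(v)+o(1)$; comparing with $d_F$ gives $df(x)(v)\le\|f|_S\,(1+o(1))$, and letting the perturbations vanish and taking suprema over $x$ and $v$ yields the claim. This is precisely the one-sided variant of the proof of \cite[Theorem~5]{GJR-13}.

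The main (and essentially only) obstacle in such a re-derivation is the reverse inequality, where one must convert an abstract tangent vector at $x$ into genuine short paths in the manifold whose Finsler length is controlled and along which $f$ grows at the prescribed rate — a localization argument requiring some care with the chart and with the possible non-reversibility of $F$. For the Corollary as stated, however, none of this is needed: it follows at once from the equality $\|f|_S=\|df|_{S,\infty}$ supplied by the Proposition.
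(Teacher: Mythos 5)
Your argument is exactly the paper's: the first equality is the definition of $SC^1_{1^-}(\mathcal{X})$, and the second follows immediately from the identity $\|f|_S=\|df|_{S,\infty}$ of the preceding Proposition, which the paper likewise invokes without further proof. The additional sketch of the Proposition itself is not needed for the Corollary but is consistent with the one-sided adaptation of \cite[Theorem~5]{GJR-13} that the paper cites.
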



\smallskip
Using the above result, we can easily see that, in the case of compact manifolds, every almost isometry is strict.

\begin{proposition}[Almost isometries for compact Finsler manifolds]\label{compact}
Let $(\mathcal{X},F)$ and $(\mathcal{Y},G)$ be connected and compact Finsler manifolds. Then every almost isometry $\tau:\mathcal{X}\to \mathcal{Y}$ is strict.
\end{proposition}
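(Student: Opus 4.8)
The plan is to use the characterization of strict almost isometries given in Proposition~\ref{sai}, which reduces everything to showing that the associated functions $\phi:\mathcal{X}\to\mathbb{R}$ and $\psi:\mathcal{Y}\to\mathbb{R}$ satisfy $\|\phi|_S<1$ and $\|\psi|_S<1$. By Proposition~\ref{tausmooth}, since $\tau$ is an almost isometry between Finsler manifolds, both $\tau$ and $\phi$ are smooth; in particular $\phi\in C^1(\mathcal{X})$, and symmetrically $\psi=\psi_{\tau^{-1}}\in C^1(\mathcal{Y})$. So it suffices to prove $\|\phi|_S<1$, the argument for $\psi$ being identical after replacing $\tau$ by $\tau^{-1}$.

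The key point is that $\phi$ is automatically forward semi-Lipschitz with $\|\phi|_S\le 1$: indeed, from $d_{\mathcal{Y}}(\tau(x_1),\tau(x_2))=d_{\mathcal{X}}(x_1,x_2)+\phi(x_1)-\phi(x_2)$ and $d_{\mathcal{Y}}\ge 0$ we get $\phi(x_2)-\phi(x_1)\le d_{\mathcal{X}}(x_1,x_2)$ for all $x_1,x_2$, hence $\|\phi|_S\le 1$. By the one-sided version of the Gordon--Jaramillo--Rangel result (the Proposition preceding Corollary~\ref{Slip-dif}), this means $\|d\phi(x)|_F\le 1$ for every $x\in\mathcal{X}$, i.e. $df(x)(v)\le F(x,v)$ for all tangent vectors $v$. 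What we must upgrade is the \emph{strict} inequality $\|d\phi(x)|_F<1$ uniformly in $x$, and then conclude $\sup_x\|d\phi(x)|_F<1$ by compactness.

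To get strictness at a fixed point $x$: suppose, toward a contradiction, that $\|d\phi(x_0)|_F=1$ for some $x_0$. Then there is a unit vector $v_0\in T_{x_0}\mathcal{X}$ (the supremum in the definition of $\|d\phi(x_0)|_F$ is attained since $\{F(x_0,\cdot)\le 1\}$ is compact) with $d\phi(x_0)(v_0)=1=F(x_0,v_0)$. Now run a geodesic (or just the flow of a smooth vector field extending $v_0$) from $x_0$ in the direction $v_0$ for a short forward time, call its value at time $t$ the point $\gamma(t)$; then along this curve $\frac{d}{dt}\phi(\gamma(t))\big|_{t=0}=1$ and $\ell_F(\gamma|_{[0,t]})=t+o(t)$, so $\phi(\gamma(t))-\phi(x_0)=t+o(t)$ while $d_{\mathcal{X}}(x_0,\gamma(t))\le \ell_F(\gamma|_{[0,t]})=t+o(t)$. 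I would combine these with the defining equation $d_{\mathcal{Y}}(\tau(x_0),\tau(\gamma(t)))=d_{\mathcal{X}}(x_0,\gamma(t))+\phi(x_0)-\phi(\gamma(t))$, which forces $d_{\mathcal{Y}}(\tau(x_0),\tau(\gamma(t)))=o(t)$; but $\tau$ is a diffeomorphism, so $d_{\mathcal{Y}}(\tau(x_0),\tau(\gamma(t)))$ is comparable to $t$ for small $t$ (its derivative at $0$ is $G(\tau(x_0),d\tau(x_0)v_0)>0$), a contradiction. Hence $\|d\phi(x)|_F<1$ for every $x$.

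The remaining step is the passage from pointwise strict inequality to a uniform bound, and this is where compactness is essential and is the main (mild) obstacle: the map $x\mapsto\|d\phi(x)|_F$ must be shown to be continuous — or at least upper semicontinuous — on $\mathcal{X}$, so that on the compact manifold $\mathcal{X}$ it attains a maximum, necessarily $<1$. Continuity follows because $(x,v)\mapsto d\phi(x)(v)$ and $(x,v)\mapsto F(x,v)$ are continuous and the unit "ball field" $\{(x,v):F(x,v)\le 1\}$ is a continuously-varying family of compact convex bodies, so the parametrized supremum is continuous; I would cite the standard fact that $\|df\|_\infty$ and its one-sided analogue are well behaved, or argue directly via a finite atlas. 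With $\sup_x\|d\phi(x)|_F=:\alpha<1$ we obtain $\|\phi|_S=\|d\phi|_{S,\infty}=\alpha<1$; symmetrically $\|\psi|_S<1$; and Proposition~\ref{sai} then yields that $\tau$ is strict, with constant $c=(1-\max\{\|\phi|_S,\|\psi|_S\})^{-1}$.
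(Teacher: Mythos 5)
Your proposal is correct and follows essentially the same strategy as the paper: reduce to showing $\|\phi|_S<1$ and $\|\psi|_S<1$ via Proposition~\ref{sai}, establish the pointwise strict bound $\|d\phi(x)|_F<1$, upgrade it to a uniform bound by compactness, and conclude with Corollary~\ref{Slip-dif}. The only (inessential) difference is in the pointwise step: the paper reads $d\phi(x)(v)<1$ on the indicatrix directly off the identity $G=\tau_*(F)-d(\phi\circ\tau^{-1})$ from Proposition~\ref{tausmooth}, using that $G$ is positive on nonzero vectors, whereas you re-derive the same infinitesimal fact by a first-order expansion of the distances along a curve issuing from $x_0$ in the extremal direction.
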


\begin{proof}
Consider the function $\phi:\mathcal{X}\to \mathbb{R}$ associated to $\tau$ in the sense of Proposition~\ref{ai}. By the above proposition we have that $G=\tau_*(F) + d(\phi\circ \tau^{-1})$. Then for every $x\in \mathcal{X}$ and every $v\in T_x\mathcal{X}$:
$$
G( \tau (x), d\tau (x) (v)) = F(x, v) + d \phi(x)(v).
$$
As a consequence, if $F(x, v)=1$, since we have that $d\tau (x)(v) \neq 0$, and then $G( \tau (x), d\tau (x) (v))>0$, it follows that  $d\phi (x)(v)<1$. 

For every $x\in \mathcal{X}$, the indicatrix $S_x := \{ v\in T_x\mathcal{X} \, : \, F(x, v)=1 \}$ is compact. Therefore, for each fixed $x_0\in \mathcal{X}$  we can choose a compact neighborhood $W^{x_0}$ such that the portion of the indicatrix bundle over $W^{x_0}$ is a compact set. That is, the set
$$
B_{x_0} = \{(x, v)\in T\mathcal{X} \, : \, x\in W^{x_0}; \, v\in T_x\mathcal{X}, \, F(x, v)=1\}
$$
is compact, and furthermore $d\phi (x)(v)<1$ for every $(x, v) \in B_{x_0}$. Then $\|d\phi (x)|_S <1$ for every $x \in W^{x_0}$. Now, from the compactness of $\mathcal{X}$ we obtain that $\|d\phi|_{S, \infty} <1$. Then by Corollary~\ref{Slip-dif} we have that $\|\phi|_S<1$. Finally, considering $\tau^{-1}$ and using Proposition~\ref{sai} we obtain the result. 
\end{proof}

We next give a simple example of non-strict almost isometry:

\begin{example}[Nonstrict almost isometry]
 Let $\mathcal{X}=\mathcal{Y}= \mathbb R$. We consider on $\mathcal{X}$ the usual Finsler structure $F_{\mathcal{X}}(x, v) = \vert v \vert$ and we define on $\mathcal{Y}$ the Finsler structure $F_{\mathcal{Y}}(x, v) = \vert v \vert - d \phi(x)(v)$, where $\phi : \mathbb R \to \mathbb{R}$ is given by
$$
\phi (x) := \int_0^x \frac{t^2}{1+t^2} \, dt.
$$
Note that $(\mathcal{Y}, F_{\mathcal{Y}})$ is a Randers space, since $\vert \phi'(x) \vert < 1$ for every $x\in \mathbb{R}$. It is easy to see that the associated Finsler distances are $d_{\mathcal{X}} (x, x')= \vert x-x'\vert$ and $d_{\mathcal{Y}} (x, x')= \vert x-x'\vert + \phi (x) - \phi(x')$. In this way we obtain that the identity map $\tau : \mathcal{X} \to \mathcal{Y}$ given by $\tau (x)=x$ is an almost isometry from $(\mathcal{X}, d_{\mathcal{X}})$ to $(\mathcal{Y}, d_{\mathcal{Y}})$. Nevertheless in this case we have that $\|\phi|_S=1$. Therefore by Proposition~\ref{sai} the almost isometry $\tau$ is not strict.
\end{example}

\medskip

\noindent The following proposition shows that the elements of $SC_{1^-}^1(\mathcal{X})$ can be used to describe open sets of~$\mathcal{X}$. The proof is omitted, as it follows from standard smooth manifold arguments.
\begin{proposition}[co-zero sets]\label{cozero}
Let $(\mathcal{X},F)$ be a Finsler manifold and $\mathcal{U}$ an open subset of $\mathcal{X}$. Then, there exists a smooth function $f:\mathcal{X}\to[0,\infty)$ such that $$\mathcal{U}=\{x\in \mathcal{X}:\:f(x)>0\}.$$ Moreover, $f$ can be chosen so that $\|df\|_\infty< 1$, and therefore $f\in SC_{1^-}^1(\mathcal{X})$.
\end{proposition}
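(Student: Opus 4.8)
The plan is to build $f$ as a suitably damped countable sum of bump functions supported inside $\mathcal{U}$.

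\textbf{Step 1 (a countable family of bumps filling $\mathcal{U}$).} For every $x\in\mathcal{U}$ pick a coordinate ball $B_x$ with $\overline{B_x}\subset\mathcal{U}$ and a $C^\infty$-smooth $\theta_x\colon\mathcal{X}\to[0,1]$ that equals $1$ near $x$ and has $\operatorname{supp}\theta_x\subset B_x$. Since a manifold is Lindel\"of, countably many of the open sets $\{\theta_x>0\}$ already cover $\mathcal{U}$; call the chosen bumps $\varphi_1,\varphi_2,\dots$, so each $\operatorname{supp}\varphi_n$ is a compact subset of $\mathcal{U}$ and $\bigcup_n\{\varphi_n>0\}=\mathcal{U}$. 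Because $\operatorname{supp}\varphi_n$ is compact it is covered by finitely many charts, so
$$
P_n:=\max_{0\le k\le n}\bigl(\text{the }C^k\text{-sup-norm of }\varphi_n\text{ in those charts}\bigr),\qquad
M_n:=\sup_{x\in\mathcal{X}}\|d\varphi_n(x)\|_F
$$
are finite; here $M_n<\infty$ because over a compact set of base points the $F$-unit balls lie inside a fixed Euclidean ball (by continuity and positive homogeneity of $F$, using positivity $F(x,v)=0\Rightarrow v=0$), so $\|d\varphi_n(x)\|_F$ is bounded by a constant times $\sup|\nabla\varphi_n|$, and it vanishes outside the compact set $\operatorname{supp}\varphi_n$.

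\textbf{Step 2 (choice of coefficients and verification).} Set $f:=\sum_{n\ge1}c_n\varphi_n$ with $c_n:=2^{-n-1}\,(1+P_n+M_n)^{-1}>0$. The bound $c_nP_n\le 2^{-n-1}$ makes the series and all of its termwise derivatives converge uniformly on each prescribed compact subset of $\mathcal{X}$, so $f$ is $C^\infty$-smooth on all of $\mathcal{X}$ with $df=\sum_n c_n\,d\varphi_n$. Clearly $f\ge0$; and $f(x)>0$ iff $x\in\{\varphi_n>0\}$ for some $n$ iff $x\in\mathcal{U}$ — if $x\notin\mathcal{U}$ then $x\notin\operatorname{supp}\varphi_n$ for every $n$, so every summand vanishes at $x$. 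This gives the co-zero statement. Finally, by the triangle inequality for $\|\cdot\|_F$,
$$
\|df(x)\|_F\le\sum_{n\ge1}c_n\|d\varphi_n(x)\|_F\le\sum_{n\ge1}c_nM_n\le\sum_{n\ge1}2^{-n-1}=\tfrac12<1,
$$
so $\|df\|_\infty\le\tfrac12<1$ and hence, by Corollary~\ref{Slip-dif}, $f\in SC_{1^-}^1(\mathcal{X})$.

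The only point that is more than routine is the $C^\infty$-smoothness of the infinite sum \emph{at the boundary} $\partial\mathcal{U}$: the supports $\operatorname{supp}\varphi_n$ need not form a locally finite family in $\mathcal{X}$, since they may accumulate on $\partial\mathcal{U}$, so one cannot simply invoke local finiteness and must instead use the rapidly decaying coefficients $c_n$ built to dominate the $C^k$-norms $P_n$; this is precisely the classical device behind ``every closed set is the zero set of a smooth nonnegative function,'' now carried out while also controlling the first derivative uniformly. Everything else — existence of the bumps, the Lindel\"of extraction of a countable subcover, the finiteness of $M_n$, and the termwise derivative estimate — is standard.
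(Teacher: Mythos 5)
The paper omits this proof as ``standard smooth manifold arguments,'' and your damped countable sum of bump functions is precisely that standard construction; it is correct, and in particular the chart-independent estimate via $M_n$ cleanly yields $\|df\|_\infty\le\tfrac12<1$ and the $C^1$ membership in $SC_{1^-}^1(\mathcal{X})$ that the paper actually needs. The one imprecision is in the $C^\infty$ claim: the norms $P_n$ should be measured with respect to a single fixed countable atlas (say, the $C^k$-norms over the first $n$ charts), rather than in charts adapted to each $\varphi_n$, since otherwise the bound $c_nP_n\le 2^{-n-1}$ does not control the derivatives of $\varphi_n$ in the chart where smoothness is tested --- the conversion involves derivatives of transition maps that vary with $n$ and are a priori unbounded; this is the usual bookkeeping in the ``closed sets are zero sets'' argument you cite, and it does not affect the $C^1$ conclusion.
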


\medskip
Let us now recall from \cite[Theorem 8]{GJR-13} the following smooth approximation theorem. An adaptation of this result (stated below as Corollary~\ref{smoothslip}) will be one of the key elements of our main result.

\begin{theorem}[Smooth approximation of Lipschitz functions in Finsler manifolds]
Let $(\mathcal{X},F)$ be a connected, second countable Finsler manifold, ${f:\mathcal{X}\to \mathbb{R}}$ a Lipschitz function, $\varepsilon:\mathcal{X}\to (0,+\infty)$ a continuous function and $r>0$. Then, there exists a $C^1$-smooth Lipschitz function $g:\mathcal{X}\to \mathbb{R}$ such that:
\begin{enumerate}
\leftskip .35pc
    \item[$(i)$] $|g(x)-f(x)|\leq \varepsilon(x)$ for all $x\in \mathcal{X}$ ; \medskip
    \item[$(ii)$] $\|g\|_{\mathrm{Lip}}\leq \|f\|_{\mathrm{Lip}}+r$.
\end{enumerate}
\end{theorem}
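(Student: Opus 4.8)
The plan is to run the classical scheme of smoothing $f$ chart-by-chart and patching the pieces together with a partition of unity, the only delicate point being that neither the local smoothing nor the patching may inflate the Lipschitz constant — measured with $F$ — by more than the prescribed $r$. Write $L:=\|f\|_{\mathrm{Lip}}$; the case $L=0$ is trivial, so assume $L>0$ and fix $\epsilon'>0$ so small that $L\,\frac{1+\epsilon'}{1-\epsilon'}\le L+\frac r2$. Since $\mathcal X$ is second countable, hence paracompact, I would first cover it by a countable locally finite family of charts $\varphi_i\colon U_i\to\mathbb R^n$ with $\overline{\varphi_i(U_i)}$ compact and convex, each chosen small enough that, identifying $T_xU_i$ with $\mathbb R^n$ via $d\varphi_i(x)$ and letting $N_i:=F(p_i,\cdot)$ be the Minkowski norm of $F$ at the centre $p_i$ of $U_i$, one has
\[
(1-\epsilon')\,N_i(w)\ \le\ F(x,w)\ \le\ (1+\epsilon')\,N_i(w)\qquad\text{for all }x\in U_i,\ w\in\mathbb R^n
\]
(possible by joint continuity of $F$, compactness of the $N_i$-unit sphere and positive homogeneity); then take a subordinate partition of unity $\{\theta_i\}$ with $\operatorname{supp}\theta_i\subset U_i$ compact, $\sum_i\theta_i\equiv1$, and set $M_i:=\sup_x\|d\theta_i(x)\|_F<+\infty$.

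On each chart I would then smooth $f$ by convolution. On the convex set $\varphi_i(U_i)$, comparing $d_F$ with the length of the straight segment and using the upper estimate above, $\tilde f_i:=f\circ\varphi_i^{-1}$ satisfies $|\tilde f_i(u)-\tilde f_i(v)|\le L(1+\epsilon')\,N_i(v-u)$; convolving with a standard mollifier $\rho_{\delta}$ (for $\delta$ small enough to be defined near $\varphi_i(\operatorname{supp}\theta_i)$) does not increase such a one-sided Lipschitz bound, so $g_i:=(\tilde f_i*\rho_{\delta})\circ\varphi_i$ satisfies $|dg_i(x)(w)|\le L(1+\epsilon')\,N_i(w)$ for all $w$; by the lower estimate above ($F(x,w)\le1$ forces $N_i(w)\le(1-\epsilon')^{-1}$) this gives $\|dg_i(x)\|_F\le L\frac{1+\epsilon'}{1-\epsilon'}\le L+\frac r2$ on $\operatorname{supp}\theta_i$. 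Since $\tilde f_i*\rho_{\delta}\to\tilde f_i$ uniformly on compacta, I would moreover take $\delta=\delta_i$ so small that $|g_i(x)-f(x)|\le\eta_i(x):=\min\bigl(\varepsilon(x),\,r\,2^{-i-1}M_i^{-1}\bigr)$ on $\operatorname{supp}\theta_i$ (dropping the second term if $M_i=0$, in which case $d\theta_i\equiv0$).

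Finally I would set $g:=\sum_i\theta_ig_i$, a locally finite sum of $C^1$ functions, hence $C^1$. Then $|g-f|\le\sum_i\theta_i|g_i-f|\le\varepsilon$, which gives $(i)$. For $(ii)$, since $\sum_id\theta_i\equiv0$ one writes, for every $x\in\mathcal X$ and $v\in T_x\mathcal X$,
\[
dg(x)(v)=\sum_i\theta_i(x)\,dg_i(x)(v)+\sum_i\bigl(g_i(x)-f(x)\bigr)\,d\theta_i(x)(v),
\]
so that, when $F(x,v)\le1$, the first sum is $\le L+\frac r2$ and the second (finite at $x$) is bounded by $\sum_i\eta_i(x)M_i\le\sum_{i\ge1}r\,2^{-i-1}=\frac r2$; hence $\|dg(x)\|_F\le L+r$ for every $x$. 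By the identity $\|g\|_{\mathrm{Lip}}=\|dg\|_\infty$ valid for $C^1$ functions on a connected Finsler manifold, $g$ is Lipschitz and $\|g\|_{\mathrm{Lip}}\le\|f\|_{\mathrm{Lip}}+r$.

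The step I expect to be the main obstacle is the local smoothing. A priori, mollifying in a chart only controls $dg_i$ in a Euclidean / $N_i$ operator norm, while the conclusion concerns $F$, and $F(x,\cdot)$ genuinely differs from the fixed norm $N_i$. The point is that on a sufficiently small convex chart $F(x,\cdot)$ is, uniformly in $x$, $\frac{1+\epsilon'}{1-\epsilon'}$-bi-Lipschitz equivalent to the single Minkowski norm $N_i$, and mollification is nonexpansive for $N_i$; balancing these two facts is exactly what keeps the Lipschitz constant below $L+\frac r2$ before the patching, while the summability choice $\eta_i\lesssim 2^{-i}/M_i$ absorbs the error term $\sum_i(g_i-f)\,d\theta_i$ produced by the partition of unity. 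The construction of the locally finite good atlas and the passage from $\|dg\|_\infty$ to $\|g\|_{\mathrm{Lip}}$ are routine.
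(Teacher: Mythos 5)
The paper does not prove this statement: it is quoted verbatim from \cite[Theorem 8]{GJR-13}, whose proof (Proposition 6, Lemma 7 and Theorem 8 of that reference) is precisely the scheme you describe — local mollification in charts where $F$ is uniformly $(1\pm\epsilon')$-comparable to a fixed Minkowski norm, followed by a partition-of-unity gluing in which the error term $\sum_i(g_i-f)\,d\theta_i$ is absorbed by the summable choice of the $\eta_i$. Your argument is correct and is essentially that proof; the only point worth a word of care is the asymmetry of $N_i$ when passing from the difference-quotient bound to $|dg_i(x)(w)|$, which is harmless here because the Lipschitz hypothesis on $f$ is two-sided, so the mollified function inherits the bound with $\min\{N_i(w),N_i(-w)\}$ and the estimate $\|dg_i(x)\|_F\le L\,\frac{1+\epsilon'}{1-\epsilon'}$ goes through.
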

\medskip
By replacing the Lipschitz functions by semi-Lipschitz functions in Proposition 6, Lemma 7 and Theorem 8 of \cite{GJR-13}, we obtain the following corollary:

\begin{corollary}[Smooth approximation of semi-Lipschitz functions in Finsler manifolds]\label{smoothslip}
Let $(\mathcal{X},F)$ be a connected, second countable Finsler manifold, $f:\mathcal{X}\to \mathbb{R}$ a semi-Lipschitz function, $\varepsilon:\mathcal{X}\to (0,+\infty)$ a continuous function and $r>0$. Then, there exists a $C^1$-smooth semi-Lipschitz function $g:\mathcal{X}\to \mathbb{R}$ that approximates $f$ in the following sense:
\begin{enumerate}
\leftskip .35pc
    \item[$(i)$] $|g(x)-f(x)|\leq \varepsilon(x)$ for all $x\in \mathcal{X}$ ; \medskip
    \item[$(ii)$] $\|g|_{S}\leq \|f|_{S}+r$.
\end{enumerate}
\end{corollary}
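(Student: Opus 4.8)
The plan is to run, almost verbatim, the three-step argument behind \cite[Theorem~8]{GJR-13} — local mollification, a gluing lemma, and globalization via a partition of unity — checking at each step that only \emph{one-sided} estimates are used, so that "Lipschitz" may everywhere be replaced by "semi-Lipschitz" and the symmetric norm $\|\cdot\|_F$ by the asymmetric norm $\|\cdot|_F$. The one algebraic fact that makes the replacement legitimate is that the semi-Lipschitz inequality, written infinitesimally as $dh(x)(v)\le L\,F(x,v)$ for all $x\in\mathcal{X}$, $v\in T_x\mathcal{X}$ (equivalently $\|dh|_{S,\infty}\le L$, by the proposition preceding Corollary~\ref{Slip-dif}), is stable under two operations that introduce no asymmetry: convolution of $h$ against a probability density inside a chart, and convex combinations $\sum_i\lambda_ih_i$. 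Neither uses that $F(x,\cdot)$ is symmetric — which is precisely why the lattice operations, being neither one-sided--friendly nor compatible with smoothness, are avoided here.

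\textbf{Local step.} A forward semi-Lipschitz $f$ is Lipschitz for the symmetrized distance $d^s$ (Remark following Definition~\ref{def-SL}), hence continuous for the manifold topology (Remark~\ref{topfinsler}) and locally Lipschitz in Euclidean charts; by Rademacher it is a.e. differentiable there and $f*\rho_\delta$ has derivative $(df)*\rho_\delta$. Fix $p\in\mathcal{X}$, a chart $(U,\varphi)$ with $\varphi(p)=0$, and, for $\eta>0$, shrink $U$ so that $F(x,v)\le(1+\eta)F_p(v)\le(1+\eta)^2F(x,v)$ on $U$, where $F_p:=F(p,\cdot)$. Comparing $d_F$ with the straight segments of the chart gives $d_F(x,x+tv)\le(1+\eta)tF_p(v)$ for $t>0$ small, whence $df(x)(v)\le(1+\eta)\|f|_S\,F_p(v)$ a.e. on $U$, for every $v$. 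Therefore, for $\delta$ small, $g:=f*\rho_\delta$ is $C^1$-smooth on a slightly smaller neighborhood and there $dg(x)(v)=\int df(x-z)(v)\rho_\delta(z)\,dz\le(1+\eta)\|f|_S\,F_p(v)\le(1+\eta)^2\|f|_S\,F(x,v)$, while $|g-f|$ is as small as desired (uniform continuity of $f$ on the compact closure). Choosing $\eta,\delta$ suitably produces, around each $p$, a $C^1$-smooth $g$ with $\|dg(x)|_F\le\|f|_S+r/2$ on a relatively compact neighborhood $U_p$ of $p$ and $|g-f|$ arbitrarily small on $U_p$.

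\textbf{Globalization.} Since $\mathcal{X}$ is connected and second countable it is paracompact; fix a locally finite cover $\{U_i\}$ by such neighborhoods, a subordinate smooth partition of unity $\{\psi_i\}$ with $\sum_i\psi_i\equiv1$, and set $M_i:=\sup_x\|d\psi_i(x)\|_F<\infty$ (finite, as each $\psi_i$ is smooth with compact support) and $\varepsilon_i:=\inf_{\overline{U_i}}\varepsilon>0$. By the local step choose on each $U_i$ a $C^1$-smooth $g_i$ with $\|dg_i(x)|_F\le\|f|_S+r/2$ for $x\in U_i$ and $|g_i-f|\le\delta_i$ on $U_i$, where $\delta_i>0$ is taken so small that $\delta_i\le\varepsilon_i$ and $M_i\delta_i\le r\,2^{-i-1}$. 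Put $g:=\sum_i\psi_ig_i$, a locally finite (hence $C^1$) sum. Then pointwise $|g-f|=\big|\sum_i\psi_i(g_i-f)\big|\le\sum_i\psi_i\delta_i\le\varepsilon$, giving $(i)$; and, using $\sum_id\psi_i\equiv0$, for all $x$ and $v\in T_x\mathcal{X}$,
\[
dg(x)(v)=\sum_i\psi_i(x)\,dg_i(x)(v)+\sum_id\psi_i(x)(v)\,(g_i(x)-f(x)),
\]
where the first sum is a convex combination of numbers $\le(\|f|_S+r/2)F(x,v)$, hence $\le(\|f|_S+r/2)F(x,v)$, and the second is bounded, using $|d\psi_i(x)(v)|\le M_iF(x,v)$, by $\big(\sum_iM_i\delta_i\big)F(x,v)\le\tfrac r2F(x,v)$ (the sum over the finitely many indices active at $x$ being dominated by $\sum_i r\,2^{-i-1}$). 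Thus $\|dg(x)|_F\le\|f|_S+r$ for every $x$, i.e. $\|dg|_{S,\infty}\le\|f|_S+r$, and the proposition preceding Corollary~\ref{Slip-dif} yields $\|g|_S\le\|f|_S+r$, which is $(ii)$.

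\textbf{Main obstacle.} Every inequality above is a one-sided transcription of \cite[Proposition~6, Lemma~7 and Theorem~8]{GJR-13}; the only genuinely delicate point is the order of quantifiers in the globalization. The partition-of-unity derivatives $d\psi_i$ are fixed \emph{before} the approximations $g_i$ are produced, and it is the smallness of $g_i-f$ — which we fully control — that must absorb the error term $\sum_id\psi_i(g_i-f)$ coming from those uncontrolled derivatives. Handling this — choose the cover and $\{\psi_i\}$ first, read off $M_i$ and $\varepsilon_i$, then tune each $\delta_i$ against a summable budget $r\,2^{-i-1}$ — is exactly the bookkeeping in \cite{GJR-13}, and is the single step that requires care; the remainder follows the reference word for word with "$\le$" in place of "$|\cdot|\le$".
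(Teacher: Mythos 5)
Your proposal is correct and follows exactly the route the paper itself prescribes: the authors omit the proof of Corollary~\ref{smoothslip} precisely because it is the one-sided transcription of \cite[Proposition~6, Lemma~7 and Theorem~8]{GJR-13}, which is what you carry out, and your quantifier bookkeeping in the globalization step (fixing the partition of unity and the bounds $M_i$ before tuning the $\delta_i$ against a summable budget) is the correct and standard one. No gaps.
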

The proof of Corollary~\ref{smoothslip} (which is based to results analogous to Proposition 6 and Lemma 7 of \cite{GJR-13}) is omitted, since all arguments are straightforward adaptations of the aforementioned ones, by replacing Lipschitz bounds with semi-Lipschitz ones. \smallskip

The following proposition shows that given two connected Finsler manifolds $\mathcal{X}$ and $\mathcal{Y}$, each strict almost isometry between $\mathcal{X}$ and $\mathcal{Y}$ (with respect to their Finsler distances) induces an isomorphism of convex partially ordered sets between $SC_{1^-}^1(\mathcal{Y})$ and $SC_{1^-}^1(\mathcal{X})$.

\begin{proposition}[Strict almost isometries induce convex partially ordered isomorphsims]\label{isomorph}
Let $\mathcal{X}$, $\mathcal{Y}$ be connected Finsler manifolds and $\tau:\mathcal{X}\to \mathcal{Y}$ a strict almost isometry with respect to their Finsler distances induced by a function $\phi:X\to \mathbb{R}$ (in the sense of Proposition~\ref{ai}). Then the mapping
\[
\left \{
\begin{array}
[l]{l}
T:SC_{1^-}^1(\mathcal{Y})\to SC_{1^-}^1(\mathcal{X}) \medskip \\
Tf=f\circ \tau + \phi \medskip
\end{array}
\right.
\]
is an isomorphism of convex partially ordered sets.
\end{proposition}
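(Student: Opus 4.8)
The plan is to verify directly that $T$ is well-defined, bijective, order-preserving in both directions, and affine. The affinity is immediate: for $f,g\in SC^1_{1^-}(\mathcal Y)$ and $\lambda\in[0,1]$ we have $T(\lambda f+(1-\lambda)g)=(\lambda f+(1-\lambda)g)\circ\tau+\phi=\lambda(f\circ\tau+\phi)+(1-\lambda)(g\circ\tau+\phi)$, using that $\phi=\lambda\phi+(1-\lambda)\phi$. Order preservation in both directions is equally transparent: since $\tau$ is a bijection, $f\geq g$ on $\mathcal Y$ if and only if $f\circ\tau\geq g\circ\tau$ on $\mathcal X$, hence if and only if $f\circ\tau+\phi\geq g\circ\tau+\phi$, i.e.\ $Tf\geq Tg$. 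So the crux is showing that $T$ maps $SC^1_{1^-}(\mathcal Y)$ \emph{into} $SC^1_{1^-}(\mathcal X)$, and that it is a bijection onto this set.

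For well-definedness, let $f\in SC^1_{1^-}(\mathcal Y)$, so $f$ is $C^1$ and $\|f|_S<1$. By Proposition~\ref{tausmooth}, $\tau$ and $\phi$ are smooth, hence $Tf=f\circ\tau+\phi$ is $C^1$ on $\mathcal X$. To bound its semi-Lipschitz constant, I would work at the level of distances using Proposition~\ref{ai}: for all $x_1,x_2\in\mathcal X$,
\begin{equation*}
Tf(x_2)-Tf(x_1)=f(\tau(x_2))-f(\tau(x_1))+\phi(x_2)-\phi(x_1)\leq \|f|_S\, d_{\mathcal Y}(\tau(x_1),\tau(x_2))+\phi(x_2)-\phi(x_1).
\end{equation*}
Substituting $d_{\mathcal Y}(\tau(x_1),\tau(x_2))=d_{\mathcal X}(x_1,x_2)+\phi(x_1)-\phi(x_2)$ gives
\begin{equation*}
Tf(x_2)-Tf(x_1)\leq \|f|_S\, d_{\mathcal X}(x_1,x_2)+(1-\|f|_S)\bigl(\phi(x_2)-\phi(x_1)\bigr).
\end{equation*}
Since $\tau$ is \emph{strict}, Proposition~\ref{sai} yields $\|\phi|_S\leq\alpha<1$ for some $\alpha$; thus $\phi(x_2)-\phi(x_1)\leq\alpha\, d_{\mathcal X}(x_1,x_2)$ whenever $d_{\mathcal X}(x_1,x_2)>0$, and also when $d_{\mathcal X}(x_1,x_2)=0$ both sides vanish by the $T_1$ property and continuity. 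Writing $\beta:=\|f|_S\in[0,1)$, we get $Tf(x_2)-Tf(x_1)\leq\bigl(\beta+(1-\beta)\alpha\bigr)d_{\mathcal X}(x_1,x_2)$, and $\beta+(1-\beta)\alpha=1-(1-\beta)(1-\alpha)<1$. Hence $\|Tf|_S<1$ and $Tf\in SC^1_{1^-}(\mathcal X)$.

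For bijectivity I would exhibit the inverse explicitly. Since $\tau^{-1}:\mathcal Y\to\mathcal X$ is also a strict almost isometry (the defining inequalities are symmetric in $\tau,\tau^{-1}$, and it preserves triangular functions), with associated function $\psi:\mathcal Y\to\mathbb R$ given by Proposition~\ref{ai}, the same argument as above shows $S:SC^1_{1^-}(\mathcal X)\to SC^1_{1^-}(\mathcal Y)$, $Sg=g\circ\tau^{-1}+\psi$, is well-defined. It remains to check $S\circ T=\mathrm{id}$ and $T\circ S=\mathrm{id}$, which reduces to the identity $\psi=-\phi\circ\tau^{-1}$ (up to the additive normalization in Proposition~\ref{ai}): indeed, plugging $y_i=\tau(x_i)$ into the defining equation for $\psi$ and comparing with the one for $\phi$ forces $\psi(\tau(x_1))-\psi(\tau(x_2))=-(\phi(x_1)-\phi(x_2))$, so $\psi\circ\tau+\phi$ is constant, and with the normalization $\psi(y)=d_{\mathcal X}(\tau^{-1}(y),\tau^{-1}(y_0))-d_{\mathcal Y}(y,y_0)$ for $y_0=\tau(x_0)$ one checks this constant is zero. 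Then $S(Tf)=(f\circ\tau+\phi)\circ\tau^{-1}+\psi=f+\phi\circ\tau^{-1}+\psi=f$, and symmetrically $T(Sg)=g$. The main obstacle is really just the bookkeeping around the additive constant in Proposition~\ref{ai} and making sure the strictness of $\tau$ is used exactly where needed (to keep the semi-Lipschitz constant of $Tf$ \emph{strictly} below $1$); everything else is formal.
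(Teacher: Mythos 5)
Your proposal is correct, and the skeleton (affinity and order preservation are formal; the content is well-definedness plus exhibiting the explicit inverse) matches the paper's. The one substantive difference is how you obtain the \emph{strict} inequality $\|Tf|_S<1$. The paper first shows the non-strict implication ($\|f|_S\leq 1$ gives $\|Tf|_S\leq 1$ by the same distance substitution you use) and then upgrades it by exploiting the convexity of $T$: writing $Tf=\lambda\,T(\lambda^{-1}f)+(1-\lambda)T0$ for $\lambda\in(\|f|_S,1)$ and using $\|T0|_S=\|\phi|_S<1$ from Proposition~\ref{sai}, it gets $\|Tf|_S\leq \lambda+(1-\lambda)\|\phi|_S<1$. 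You instead carry the computation through directly and land on the explicit bound $\|Tf|_S\leq \beta+(1-\beta)\alpha=1-(1-\beta)(1-\alpha)$ with $\beta=\|f|_S$ and $\alpha\geq\|\phi|_S$ — which is quantitatively the same estimate the paper's convexity trick produces in the limit $\lambda\to\|f|_S$, just derived in one pass without invoking the affine structure of $T$. Your route is slightly more elementary and self-contained; the paper's has the mild advantage of reusing the convexity property that is in any case part of what must be verified. Two further small points in your favour: you explicitly invoke Proposition~\ref{tausmooth} to justify that $Tf$ is $C^1$ (the paper leaves this implicit), and your care with the additive normalization of $\psi$ versus $-\phi\circ\tau^{-1}$ is sound, though it can be short-circuited by simply \emph{defining} the candidate inverse as $g\mapsto g\circ\tau^{-1}-\phi\circ\tau^{-1}$, as the paper does, and checking well-definedness via Proposition~\ref{sai} applied to $\tau^{-1}$.
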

\begin{proof}
Consider the mapping $Tf=f\circ \tau + \phi$. Note that the convexity and order-preserving properties of $T$ are immediate, so we only need to check that $T$ is a well-defined bijection. To this end, note first that if $\|f|_S\leq 1$, then $\|Tf|_S\leq 1$, since:
	$$Tf(x')-Tf(x)=f(\tau(x'))-f(\tau(x)) +\phi(x') -\phi(x)\leq d_{\mathcal{Y}}(\tau(x),\tau(x'))+\phi(x') -\phi(x)=d_{\mathcal{X}}(x,x').$$
\smallskip
We shall now prove that if $f\in SC_{1^-}^1(\mathcal{Y})$ then $\|Tf|_S < 1$. Note that $T0= \phi$ and from Proposition~\ref{sai} we have that $\|\phi|_S<1$. Choose $\lambda \in (0,1)$ such that  $\|{\lambda}^{-1} f |_S < 1$. Then
	\begin{align}
	\|Tf|_S&=\left\|T\left((\lambda{\lambda}^{-1})f +(1-\lambda)0\right)\right|_S=\left\|\lambda T\left({\lambda}^{-1}f \right)+\left(1-\lambda\right)T0\right|_S  \\ \nonumber
	&\leq \lambda\left\|T\left({\lambda}^{-1}f\right)\right|_S+\left(1-\lambda\right)\left\|T0\right|_S \leq \lambda + (1-\lambda) \|T0|_S <1.
	\end{align}
	This shows that $T\left(SC_{1^-}^1(\mathcal{Y})\right) \subset SC_{1^-}^1(\mathcal{X})$ and $T$ is well-defined. An analogous argument holds for the inverse mapping $T^{-1}g=g\circ\tau^{-1} -\phi\circ\tau^{-1}$, so we conclude that $T$ is a bijection.
\end{proof}
\medskip

\section{Main result}\label{main}
The main result of this work is the converse of Proposition~\ref{isomorph} which eventually provides a functional characterization of strict almost isometries between connected, second countable and bicomplete Finsler manifolds, which becomes a characterization of all almost isometries in the compact setting (see forthcoming Corollaries~\ref{strictalmostiso}--\ref{compactalmost}).
\smallskip
\begin{theorem}[Main result]\label{teo}
Let $(\mathcal{X},d_{\mathcal{X}})$ and $(\mathcal{Y},d_{\mathcal{Y}})$ connected, second countable Finsler manifolds which are bicomplete (with their respective Finsler distances). Assume there exists an isomorphism of convex partially ordered sets ${T:SC_{1^-}^1(\mathcal{Y})\to SC_{1^-}^1(\mathcal{X})}$. Then, there exist $\alpha>0$, a quasi-metric $d'_X$ on $X$ and a bijection $\tau:\mathcal{X}\to \mathcal{Y}$ such that: \smallskip
\begin{enumerate}
\leftskip .25pc
    \item[$(i)$] $(\mathcal{X}, d_{\mathcal{X}}$ is almost isometric to $(\mathcal{X},d'_{\mathcal{X}})$.\smallskip
    \item[$(ii)$] $(\mathcal{X},\alpha\cdot d_{\mathcal{X}}')$ is isometric to $(\mathcal{Y},d_{\mathcal{Y}})$ via $\tau$.\smallskip
    \item[$(iii)$] $\mathcal{X}$ is diffeomorphic to $\mathcal{Y}$ via $\tau$.\smallskip
    \item[$(iv)$] $\forall f\in SC_{1^-}^1(\mathcal{Y}),\:Tf=c\cdot (f\circ \tau)+\phi$, with $c={\alpha}^{-1}$ and $\phi=T0$.\\ (In particular, $\phi$ is smooth and $\|\phi|_S<1$).
\end{enumerate}
\end{theorem}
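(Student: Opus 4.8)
The plan is to reduce Theorem~\ref{teo} to the quasi-metric representation theorem of \cite{CJ} (quoted here as Theorem~\ref{4-thm3.1}), used in its forward version, with $\mathrm{SLip}_1(\cdot)$ in place of $\mathrm{SLip}_1(\overline{\cdot})$ — valid by the discussion following its statement. Concretely: first extend the given isomorphism of convex partially ordered sets $T$ to a convex lattice isomorphism $\tilde T\colon\mathrm{SLip}_1(\mathcal{Y})\to\mathrm{SLip}_1(\mathcal{X})$ between the ambient lattices; then apply Theorem~\ref{4-thm3.1} to $\tilde T$; and finally promote the resulting homeomorphism to a diffeomorphism and a rescaled isometry using Proposition~\ref{tausmooth} and the Deng--Hou theorem \cite{DH}.

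The engine of the extension is an order-density statement: every $f\in\mathrm{SLip}_1(\mathcal{X})$ is the pointwise supremum of the functions of $SC_{1^-}^1(\mathcal{X})$ lying below it, and the pointwise infimum of those lying above it. To prove this, fix $x_0\in\mathcal{X}$; by the triangle inequality the function $g_0:=f(x_0)-d_{\mathcal{X}}(\cdot,x_0)$ lies in $\mathrm{SLip}_1(\mathcal{X})$, and since $\|f|_S\le 1$ one has $g_0\le f$ and $g_0(x_0)=f(x_0)$. Given $\varepsilon>0$, Corollary~\ref{smoothslip} yields a $C^1$ semi-Lipschitz approximant of $g_0$, which, with the help of a bump function from Proposition~\ref{cozero}, can be corrected so as to remain below $f$ while its semi-Lipschitz constant is pushed strictly below $1$; this produces $g\in SC_{1^-}^1(\mathcal{X})$ with $g\le f$ and $g(x_0)>f(x_0)-\varepsilon$. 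The dual statement for infima is obtained symmetrically, using $x\mapsto f(x_0)+d_{\mathcal{X}}(x_0,x)$. Connectedness and second countability are used precisely here, through Corollary~\ref{smoothslip}.

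Granting this, one defines $\tilde T$ on $\mathrm{SLip}_1(\mathcal{Y})$ by extending $T$ along these suprema; each such supremum is taken over a set that is bounded above (by $Tg'$ for any smooth majorant $g'$ of $f$, which exists by the dual density statement), so $\tilde T$ is well defined with values in $\mathrm{SLip}_1(\mathcal{X})$ and extends $T$. Using the convexity of $T$ together with the density statement, one checks that $\tilde T$ is order preserving both ways, preserves convex combinations, and is a bijection with inverse the analogous extension of $T^{-1}$, that is, a convex lattice isomorphism, with $\tilde T0=T0=:\phi\in SC_{1^-}^1(\mathcal{X})$. Since $\mathcal{X}$ and $\mathcal{Y}$ are bicomplete, Theorem~\ref{4-thm3.1} applied to $\tilde T$ provides $\alpha>0$, a bijection $\tau\colon\mathcal{X}\to\mathcal{Y}$ and the quasi-metric $d_{\mathcal{X}}'(x,x')=d_{\mathcal{X}}(x,x')+\phi(x)-\phi(x')$ such that: $(\mathcal{X},d_{\mathcal{X}})$ is almost isometric to $(\mathcal{X},d_{\mathcal{X}}')$, which is $(i)$; $\tau\colon(\mathcal{X},\alpha\,d_{\mathcal{X}}')\to(\mathcal{Y},d_{\mathcal{Y}})$ is an isometry, which is $(ii)$; and $\tilde Tf=c\,(f\circ\tau)+\phi$ with $c=\alpha^{-1}$ for all $f$, whence $(iv)$ on restricting to $SC_{1^-}^1(\mathcal{Y})$ (and $\phi=T0$ is smooth with $\|\phi|_S<1$ by definition of the codomain of $T$). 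Finally, since $\|\phi|_S<1$, Proposition~\ref{tausmooth} identifies $(\mathcal{X},d_{\mathcal{X}}')$ with the connected Finsler manifold carrying the Finsler structure $F-d\phi$ (the identity being the almost isometry induced by $\phi$), so $(ii)$ exhibits $\tau$ as an isometry between the connected Finsler manifolds $(\mathcal{X},\alpha(F-d\phi))$ and $(\mathcal{Y},G)$; the Deng--Hou theorem \cite{DH} then upgrades $\tau$ to a Finsler isometry, in particular a diffeomorphism, which is $(iii)$.

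The step I expect to be the main obstacle is the order-density statement together with the verification that $\tilde T$ remains a bijective convex lattice isomorphism: because $SC_{1^-}^1$ is merely a convex partially ordered set (so the lattice machinery of \cite{CJ} is not available directly) and the defining bound $\|\cdot|_S<1$ is \emph{strict}, constructing smooth minorants and majorants of a prescribed $\mathrm{SLip}_1$-function with semi-Lipschitz constant strictly below $1$ — and doing so compatibly with the order and with convex combinations — is delicate, and this is exactly the role played by Corollary~\ref{smoothslip} and Proposition~\ref{cozero}.
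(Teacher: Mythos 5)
Your reduction to Theorem~\ref{4-thm3.1} hinges on the order-density claim that every $f\in\mathrm{SLip}_1(\mathcal{Y})$ is the pointwise supremum of the elements of $SC^1_{1^-}(\mathcal{Y})$ below it (and the infimum of those above it). This claim is false on non-compact manifolds, and the failure is fatal to the whole strategy. Take $\mathcal{Y}=\mathbb{R}$ with the standard structure and $f=-d_{\mathcal{Y}}(\cdot,y_0)$, which lies in $\mathrm{SLip}_1(\mathcal{Y})$ since $f(y)-f(y')=d(y',y_0)-d(y,y_0)\leq d(y',y)$. If $g\leq f$ and $\|g|_S\leq 1-\delta$ with $\delta>0$, then
$$g(y_0)\;\leq\; g(y)+(1-\delta)\,d_{\mathcal{Y}}(y,y_0)\;\leq\; -d_{\mathcal{Y}}(y,y_0)+(1-\delta)\,d_{\mathcal{Y}}(y,y_0)\;=\;-\delta\, d_{\mathcal{Y}}(y,y_0)\;\longrightarrow\;-\infty,$$
so the set of minorants of $f$ in $SC^1_{1^-}(\mathcal{Y})$ is \emph{empty} and your supremum is $-\infty$. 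The same computation shows that your concrete construction cannot be repaired: any $g\in SC^1_{1^-}$ with $g\leq f$ and $g(y_0)\geq f(y_0)-\varepsilon$ forces $\delta\, d(y,y_0)\leq\varepsilon$ for all $y$, which is impossible once $\mathcal{Y}$ is unbounded. Scaling the cone $g_0=f(y_0)-d(\cdot,y_0)$ by $\lambda<1$ raises its values exactly where $g_0$ is large and negative, so the scaled function leaves the set $\{g\leq f\}$; Corollary~\ref{smoothslip} and bump corrections cannot recover this, since the obstruction above is a global one, independent of smoothing. (The dual statement for majorants fails symmetrically, e.g.\ for $f=d_{\mathcal{Y}}(y_0,\cdot)$.) Even setting this aside, the assertions that the extension $\tilde T$ preserves convex combinations in both directions and is a bijection with inverse the analogous extension of $T^{-1}$ are nontrivial and are not argued; for instance a smooth minorant of $\lambda f_1+(1-\lambda)f_2$ need not split as a convex combination of minorants of $f_1$ and $f_2$.

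For contrast, the paper never extends $T$ to the ambient lattice. It extracts the topology of $\mathcal{X}$ and $\mathcal{Y}$ directly from the convex-order structure of $SC^1_{1^-}$ via the regular open sets $\mathcal{V}_h^f$ and the relations $\sqcap_h$, $\sqsubset_h$, applies the key lemma of \cite{CC11} to the induced inclusion-preserving bijection of bases to obtain $\tau$ on dense subsets, determines the action of $T$ on constants by convexity, and only then proves the isometry property by smoothly approximating the functions $\lambda\, d_{\mathcal{Y}}(y_1,\cdot)$ — which, being nonnegative with semi-Lipschitz constant $\lambda<1$, genuinely admit approximants in $SC^1_{1^-}$, unlike the unbounded-below cones your argument requires.
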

The proof of the above theorem will be given in Subsection~\ref{ss-3.3}. Before, we shall need to establish several intermediate results. In what follows, we shall always assume that $(\mathcal{X},d_{\mathcal{X}})$, $(\mathcal{Y},d_{\mathcal{Y}})$ are connected, second countable and bicomplete Finsler manifolds and $T$ will denote an isomorphism of the convex partially ordered sets $SC_{1^-}^1(\mathcal{Y})$ and $SC_{1^-}^1(\mathcal{X})$.

\medskip
\subsection{Basis for the topologies}
The following definition introduces some useful notation and describes a certain type of open subsets of the Finsler manifolds that are naturally associated with the class of smooth semi-Lipschitz functions.
\begin{definition}[Open sets related to the order structure]
Let $h\in SC_{1^-}^1(\mathcal{Y})$. We define
\begin{align*}
    SC_{1^-}^1(\mathcal{Y})_h=&\{f\in SC_{1^-}^1(\mathcal{Y}):\:f\geq h\},\\
    SC_{1^-}^1(\mathcal{X})_{Th}=&\{g\in SC_{1^-}^1(\mathcal{X}):\:g\geq Th\}=T(SC_{1^-}^1(\mathcal{Y})_h).
\end{align*}
Furthermore, for any $f\in SC_{1^-}^1(\mathcal{Y})_h$, we denote:
\begin{align*}
    \mathrm{supp}_h(f)=&\overline{\{y\in \mathcal{Y}:\:f(y)>h(y)\}} \qquad \text{and}\quad   \mathcal{V}_h^f=\mathrm{int}\left(\mathrm{supp}_h(f)\right), \smallskip \\
    \mathrm{supp}_{Th}(Tf)=&\overline{\{x\in \mathcal{X}:\:Tf(x)>Th(x)\}}\quad\text{and}\quad
    \mathcal{U}_{Th}^{Tf}=\mathrm{int}\left(\mathrm{supp}_{Th}(Tf)\right), \smallskip
\end{align*}
\noindent where closure and interior are taken in the symmetric topologies of $(\mathcal{Y},d_{\mathcal{Y}})$ and $(\mathcal{X},d_{\mathcal{X}})$.
\end{definition}
Before we proceed, let us introduce the notion of \emph{bump} function on a Finsler manifold $\mathcal{X}$.
\begin{definition}[(Smooth semi-Lipschitz) bump functions]  \label{bump}
Let $\mathcal{X}$ be Finsler manifold. A nonnegative smooth semi-Lipschitz function $b:\mathcal{X}\rightarrow \mathbb{R}_+$ is called a bump function on $\mathcal{X}$ centered at a point $x_0 \in \mathcal{X}$, provided $b(x_0)>0$ and  $\mathrm{supp}(b)\subset B_{\mathcal{X}}(x_0,r)$ for some $r>0$.
\end{definition}
It is well-known that for every $x_0\in\mathcal{X}$ and $r>0$ there exist a bump function $b\in SC_{1^{-}}^1(\mathcal{X})_0 $ with $\mathrm{supp}(b)\subset B_{\mathcal{X}}(x_0,r)$ and $b(x_0)>0$. \smallskip

We are now ready to describe a basis for the topologies in $\mathcal{Y}$ and $\mathcal{X}$ respectively, which will play an important role in the sequel.
\begin{proposition}[Topology basis for $\mathcal{X}$ and $\mathcal{Y}$]\label{base}
	Let $\mathcal{X}, \mathcal{Y}$ two Finsler manifolds and let us fix a function $h\in SC_{1^-}^1(\mathcal{Y})$. Then the families
	\[
\mathcal{B}_h(\mathcal{Y})=\{\mathcal{V}_h^f:~f\in SC_{1^-}^1(\mathcal{Y})_h\}\,\,\textrm{ and }\, \,   \mathcal{B}_h(\mathcal{X})=\{\mathcal{U}_{Th}^{Tf}:~f\in SC_{1^-}^1(\mathcal{Y})_h\}
	\]
are basis for the topologies of $(\mathcal{Y},d_{\mathcal{Y}})$ and $(\mathcal{X},d_{\mathcal{X}})$ respectively.
	\end{proposition}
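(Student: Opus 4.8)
The plan is to show that $\mathcal{B}_h(\mathcal{Y})$ is a basis for the (manifold = symmetric) topology of $\mathcal{Y}$, and then to observe that the argument for $\mathcal{B}_h(\mathcal{X})$ is \emph{formally identical}, once one realizes that $T$ preserves exactly the structure used. For $\mathcal{Y}$, there are two things to check: every $\mathcal{V}_h^f$ is open (immediate, since it is defined as an interior), and for every point $y_0\in\mathcal{Y}$ and every open neighborhood $W$ of $y_0$ there is some $f\in SC^1_{1^-}(\mathcal{Y})_h$ with $y_0\in\mathcal{V}_h^f\subset W$. For the latter, shrink $W$ to a forward ball $B_{\mathcal{Y}}(y_0,r)$ (legitimate by Remark~\ref{topfinsler}, since all three topologies agree with the manifold topology), take a smooth semi-Lipschitz bump function $b\in SC^1_{1^-}(\mathcal{Y})_0$ centered at $y_0$ with $\operatorname{supp}(b)\subset B_{\mathcal{Y}}(y_0,r)$ and $b(y_0)>0$ (Definition~\ref{bump} and the remark following it), and set $f=h+\varepsilon b$ for $\varepsilon\in(0,1)$ small enough that $\|f|_S\le\|h|_S+\varepsilon\|b|_S<1$, so that $f\in SC^1_{1^-}(\mathcal{Y})_h$. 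Then $\{f>h\}=\{b>0\}$ is an open set containing $y_0$ and contained in $B_{\mathcal{Y}}(y_0,r)$; hence $\mathcal{V}_h^f=\operatorname{int}(\overline{\{b>0\}})\subset\overline{B_{\mathcal{Y}}(y_0,r)}$, and $y_0\in\{b>0\}\subset\mathcal{V}_h^f$. Choosing $r$ small enough (so that $\overline{B_{\mathcal{Y}}(y_0,r)}\subset W$) finishes this half.

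For $\mathcal{B}_h(\mathcal{X})$ one proceeds the same way, now using $Th$ in place of $h$ and a bump function on $\mathcal{X}$. Given $x_0\in\mathcal{X}$ and a neighborhood $W'\ni x_0$, pick a bump $b'\in SC^1_{1^-}(\mathcal{X})_0$ centered at $x_0$ with $\operatorname{supp}(b')\subset B_{\mathcal{X}}(x_0,r)\subset\overline{B_{\mathcal{X}}(x_0,r)}\subset W'$. The subtlety is that the basis elements $\mathcal{U}_{Th}^{Tf}$ are indexed by $f\in SC^1_{1^-}(\mathcal{Y})_h$, i.e.\ by $g:=Tf\in SC^1_{1^-}(\mathcal{X})_{Th}$ \emph{via $T$}; but since $T$ is a bijection from $SC^1_{1^-}(\mathcal{Y})$ onto $SC^1_{1^-}(\mathcal{X})$ that preserves order (hence maps $SC^1_{1^-}(\mathcal{Y})_h$ bijectively onto $SC^1_{1^-}(\mathcal{X})_{Th}$), the family $\{Tf:f\in SC^1_{1^-}(\mathcal{Y})_h\}$ is \emph{exactly} $SC^1_{1^-}(\mathcal{X})_{Th}=\{g\in SC^1_{1^-}(\mathcal{X}):g\ge Th\}$. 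So it suffices to produce $g\in SC^1_{1^-}(\mathcal{X})_{Th}$ with $x_0\in\mathcal{U}_{Th}^{Tf}=\operatorname{int}(\overline{\{g>Th\}})\subset W'$, and $g=Th+\varepsilon b'$ with $\varepsilon\in(0,1)$ small enough that $\|g|_S\le\|Th|_S+\varepsilon\|b'|_S<1$ does the job exactly as before, since $\{g>Th\}=\{b'>0\}$.

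The one point requiring a word of care — and the closest thing to an obstacle — is the well-definedness of these candidates: one must know that $Th\in SC^1_{1^-}(\mathcal{X})$ (so that $Th+\varepsilon b'$ makes sense and lies in the right space), which holds because $T$ maps into $SC^1_{1^-}(\mathcal{X})$ by hypothesis, and that $SC^1_{1^-}(\mathcal{X})_{Th}$ is precisely the $T$-image of $SC^1_{1^-}(\mathcal{Y})_h$, which is exactly the content of the displayed identity $SC_{1^-}^1(\mathcal{X})_{Th}=T(SC_{1^-}^1(\mathcal{Y})_h)$ recorded in the preceding definition (a consequence of $T$ being an order-isomorphism). Everything else is the standard fact that a family of open sets is a basis as soon as it refines every neighborhood of every point, together with the routine estimate $\|h+\varepsilon b|_S\le\|h|_S+\varepsilon\|b|_S$ for the semi-Lipschitz seminorm. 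I would not belabor the calculations; the substance is the reduction of both claims to the existence of smooth semi-Lipschitz bump functions of arbitrarily small semi-Lipschitz constant, which is quoted from the discussion after Definition~\ref{bump}.
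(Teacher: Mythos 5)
Your argument is correct and is essentially the paper's own proof: both halves reduce to choosing a smooth semi-Lipschitz bump $b$ supported in a small symmetric ball with $\|h|_S+\|b|_S<1$ (your $\varepsilon b$ normalization is the same device), and the $\mathcal{X}$-half uses exactly the same key observation that $T$, being an order isomorphism, maps $SC_{1^-}^1(\mathcal{Y})_h$ onto $SC_{1^-}^1(\mathcal{X})_{Th}$, so $Th+b$ is realized as $Tf$ for some $f\in SC_{1^-}^1(\mathcal{Y})_h$. No gaps; the approach matches the paper's.
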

	\begin{proof}
Given $y_0\in \mathcal{Y}$ and a ball $B_{\mathcal{Y}}:=B_{\mathcal{Y}}(y_0,r)$ for the distance $d_{\mathcal{Y}}^s$ centered at $y_0$ and of radius $r>0$, we take a bump function ${b\in SC_{1^-}^1(\mathcal{Y})_0}$ such that ${\mathrm{supp}(b)\subset B_Y}$, $b(y_0)>0$ and $\|b|_S+\|h|_S< 1$. Defining $f=h+b$, we get that $y_0\in \mathcal{V}_h^f \subset B_{\mathcal{Y}}$.  \smallskip

Given $x_0\in \mathcal{X}$ and a ball $B_{\mathcal{X}}$ for $d_{\mathcal{X}}^s$ containing $x_0$, take $b\in SC_{1^-}^1(\mathcal{X})_0$ such that $\mathrm{supp}(b)\subset B_{\mathcal{X}}$, $b(x_0)>0$ and $\|b|_S+\|Th|_S<1$. Since $Th+b\geq Th$ and $T$ is an isomorphism of convex partially ordered sets, there exists $f\in SC_{1^-}^1(\mathcal{Y})_h$ such that $Tf=Th+b$. Therefore, $x_0\in \mathcal{U}_{Th}^{Tf}\subset B_{\mathcal{X}}$.
	\end{proof}

	\medskip
There is a natural bijection between the basis $\mathcal{B}_h(\mathcal{Y})$ and $\mathcal{B}_h(\mathcal{X})$:
\begin{definition}[Bijection between $\mathcal{B}_h(\mathcal{Y})$ and $\mathcal{B}_h(\mathcal{X})$ induced by $T$] \label{I}
			Let $h\in SC_{1^-}^1(\mathcal{Y})$. Then the mapping $\mathcal{I}_h:\mathcal{B}_{h}(\mathcal{Y})\to \mathcal{B}_h(\mathcal{X})$ defined by $T(\mathcal{V}_h^f)= \mathcal{U}_{Th}^{Tf}$ is a bijection.
\end{definition}
	\medskip
	\begin{remark}
		The aforementioned basis appear to depend on the choice of the function $h$. Nonetheless, we shall show in forthcoming Proposition~\ref{h} and respectively, Corollary~\ref{cor-ind}, that the basis $\mathcal{B}_h(\mathcal{X})$, $\mathcal{B}_h(\mathcal{Y})$ and, respectively, the bijection $\mathcal{I}_h$ do not depend on the choice of $h$.
	\end{remark}
\medskip
	Next, we show that for each $h\in SC_{1^-}^1(\mathcal{Y})$, the bijection $\mathcal{I}_h$ preserves the order structure of $(\mathcal{B}_{h}(\mathcal{Y}),\subset)$ and $(\mathcal{B}_h(\mathcal{X}),\subset)$. To this end, following \cite{CC} we introduce the following notation:\medskip
	\begin{enumerate}[\rm (i)]
		\item $f\sqcap_h g=\{u\in  SC_{1^-}^1(\mathcal{Y})_h:u\leq f,~u\leq g\}$. \smallskip
		\item $f\sqsubset_h g$ if for any $u\in  SC_{1^-}^1(\mathcal{Y})_h,~u\sqcap_h g=\{h\} \implies u\sqcap_h f=\{h\}$.\smallskip
		\item $Tf\sqcap_{Th} Tg=\{v\in  SC_{1^-}^1(\mathcal{X})_{Th}:v\leq Tf,~v\leq Tg\}$.\smallskip
		\item $Tf\sqsubset_{Th} Tg$ if for any $v\in  SC_{1^-}^1(\mathcal{X})_{Th},~v\sqcap_{Th} Tg=\{Th\} \implies v\sqcap_{Th} Tf=\{Th\}$.\smallskip
	\end{enumerate}

The following proposition gives more insight to the above notation. The proof follows the ideas of \cite{CC}.

\begin{proposition}\label{inc} Let $h\in SC_{1^-}^1(\mathcal{Y})$ and $f,g\in SC_{1^-}^1(\mathcal{Y})_h$. Then
		\begin{enumerate}[\rm(i)]
			\item $f\sqcap_h g=\{h\}\iff \mathcal{V}_h^f\cap \mathcal{V}_h^g=\emptyset$.\smallskip
			\item $f\sqsubset_h g \iff \mathcal{V}_h^f\subset \mathcal{V}_h^g \iff \mathrm{supp}_h(f)\subset \mathrm{supp}_h(g)$.\smallskip
			\item $Tf\sqcap_{Th} Tg=\{Th\}\iff \mathcal{U}_{Th}^{Tf}\cap \mathcal{U}_{Th}^{Tg}=\emptyset$.\smallskip
			\item $Tf\sqsubset_{Th} Tg \iff \mathcal{U}_{Th}^{Tf}\subset \mathcal{U}_{Th}^{Tg} \iff \mathrm{supp}_{Th}(Tf)\subset \mathrm{supp}_{Th}(Tg)$.
		\end{enumerate}
	\end{proposition}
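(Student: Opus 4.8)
The plan is to prove the four equivalences by working out carefully what the notation $\sqcap_h$, $\sqsubset_h$ means in geometric terms, exploiting the basis property from Proposition~\ref{base} and the existence of bump functions. I would treat items (i)--(ii) in $\mathcal{Y}$ first and then observe that (iii)--(iv) follow by the very same argument applied in $\mathcal{X}$ (replacing $h$ by $Th$, and using that $T$ is an order-isomorphism, so $f\sqcap_h g=\{h\}$ is equivalent to $Tf\sqcap_{Th}Tg=\{Th\}$ and similarly for $\sqsubset$, which immediately gives (iii) from (i) and (iv) from (ii)). So the real content is items (i) and (ii).

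For (i): if $\mathcal{V}_h^f\cap\mathcal{V}_h^g\neq\emptyset$, pick a point $y_0$ in this open intersection; using the bump-function construction from the proof of Proposition~\ref{base} I would build $u=h+b$ with $\mathrm{supp}(b)$ contained in $\mathcal{V}_h^f\cap\mathcal{V}_h^g$, $b(y_0)>0$, and $\|b|_S+\|h|_S<1$, so that $u\in SC_{1^-}^1(\mathcal{Y})_h$, $u\neq h$, and $u\le f$, $u\le g$ (the latter because on $\mathrm{supp}(b)$ both $f,g$ strictly exceed $h$ near $y_0$ — here one may need to shrink the support so that $b$ is dominated by $f-h$ and $g-h$, which is possible since $f-h$ and $g-h$ are continuous and positive on a neighborhood of $y_0$; scaling $b$ by a small constant also works). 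Hence $u\in f\sqcap_h g\setminus\{h\}$. Conversely, if $u\in f\sqcap_h g$ with $u\neq h$, then $\{u>h\}$ is a nonempty open set; since $u\le f$ and $u\le g$ we get $\{u>h\}\subset\{f>h\}\cap\{g>h\}$, so the latter is nonempty open, and taking interiors of closures, $\mathcal{V}_h^f\cap\mathcal{V}_h^g\supset\mathrm{int}\,\overline{\{u>h\}}\neq\emptyset$. This proves (i).

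For (ii): the middle equivalence $\mathcal{V}_h^f\subset\mathcal{V}_h^g\iff\mathrm{supp}_h(f)\subset\mathrm{supp}_h(g)$ is immediate once one checks that $\mathcal{V}_h^f=\mathrm{int}\,\mathrm{supp}_h(f)$ and $\mathrm{supp}_h(f)=\overline{\mathcal{V}_h^f}$ (the open set $\{f>h\}$ is dense in its closure, so taking interior and closure are mutually inverse here), whence $\mathcal{V}_h^f\subset\mathcal{V}_h^g$ implies $\overline{\mathcal{V}_h^f}\subset\overline{\mathcal{V}_h^g}$ and conversely. For the equivalence with $f\sqsubset_h g$: if $\mathcal{V}_h^f\subset\mathcal{V}_h^g$ and $u\sqcap_h g=\{h\}$, then by (i) $\mathcal{V}_h^u\cap\mathcal{V}_h^g=\emptyset$, hence $\mathcal{V}_h^u\cap\mathcal{V}_h^f=\emptyset$, hence by (i) again $u\sqcap_h f=\{h\}$; so $f\sqsubset_h g$. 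For the contrapositive of the converse, suppose $\mathcal{V}_h^f\not\subset\mathcal{V}_h^g$, i.e. $\mathrm{supp}_h(f)\not\subset\mathrm{supp}_h(g)$; pick $y_0\in\mathrm{supp}_h(f)\setminus\mathrm{supp}_h(g)$, and since $\mathrm{supp}_h(g)$ is closed choose a ball $B_{\mathcal{Y}}(y_0,r)$ disjoint from it and meeting $\{f>h\}$ (possible because $y_0$ is in the closure of $\{f>h\}$). Build a bump $u=h+b$ supported in that ball with $u\not\le h$ and $u\le f$ (shrinking/scaling as in (i)); then $\mathcal{V}_h^u\subset B_{\mathcal{Y}}(y_0,r)$ is disjoint from $\mathcal{V}_h^g$, so by (i) $u\sqcap_h g=\{h\}$, while $\mathcal{V}_h^u\cap\mathcal{V}_h^f\neq\emptyset$, so by (i) $u\sqcap_h f\neq\{h\}$; hence $f\not\sqsubset_h g$.

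The main obstacle, and the step requiring the most care, is the construction of the auxiliary bump $u\in SC_{1^-}^1(\mathcal{Y})_h$ that sits below $f$ (and below $g$): one must simultaneously control the semi-Lipschitz constant — $\|b|_S$ must be small enough that $\|h+b|_S<1$, which is handled by scaling $b$ down and invoking Proposition~\ref{cozero}/the bump existence statement — and ensure the pointwise domination $h\le h+b\le f$, which forces the support of $b$ to lie inside $\{f>h\}$ rather than merely inside $\mathrm{supp}_h(f)$; this is why in (ii) we deliberately chose the ball around $y_0$ to \emph{meet} $\{f>h\}$ and then place the bump inside that open intersection. Everything else is a routine manipulation of closures, interiors, and the order relation, together with the transfer via $T$ to the $\mathcal{X}$-side.
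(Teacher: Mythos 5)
There is a genuine gap in your proof of the hard direction of (i). You pick $y_0\in\mathcal{V}_h^f\cap \mathcal{V}_h^g$ and justify the domination $h+b\le f\wedge g$ by asserting that ``$f-h$ and $g-h$ are continuous and positive on a neighborhood of $y_0$''. This is false in general: $\mathcal{V}_h^f=\mathrm{int}\,\overline{\{f>h\}}$ can strictly contain $\{f>h\}$ (take $h=0$ and, in a chart, a nonnegative $C^1$ function $f$ vanishing exactly at $y_0$ and positive on a punctured ball around it; then $y_0\in\mathcal{V}_0^f$ but $f(y_0)=0$), so a point of $\mathcal{V}_h^f\cap\mathcal{V}_h^g$ need not satisfy either strict inequality, and neither shrinking the support of $b$ nor scaling $b$ by a small constant can produce $h\lneq h+b\le f\wedge g$ if $f-h$ vanishes at the center of the bump. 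The missing step --- which is exactly what the paper's proof supplies with its two successive approximating sequences --- is to first move to a nearby point where \emph{both} $f>h$ and $g>h$ hold: $\{f>h\}$ is open and dense in $\mathcal{V}_h^f$, hence meets the nonempty open set $\mathcal{V}_h^f\cap\mathcal{V}_h^g$ in a nonempty open set, which in turn meets the dense set $\{g>h\}$; only at such a point can the bump be centered and dominated by $f-h$ and $g-h$ (on a compact support inside $\{f>h\}\cap\{g>h\}$, where the minimum of these functions is positive). You apply precisely this precaution in part (ii), choosing the ball to meet $\{f>h\}$, so the repair is within reach; the point is that (i) needs it too, simultaneously for $f$ and $g$, and the gap propagates to your use of (i) inside (ii).

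A secondary issue: your parenthetical derivation of (iii) from (i) via the order-isomorphism $T$ is a non sequitur. The equivalence $f\sqcap_h g=\{h\}\iff Tf\sqcap_{Th}Tg=\{Th\}$ combined with (i) relates $Tf\sqcap_{Th}Tg=\{Th\}$ to the emptiness of $\mathcal{V}_h^f\cap\mathcal{V}_h^g$ in $\mathcal{Y}$, not to the emptiness of $\mathcal{U}_{Th}^{Tf}\cap\mathcal{U}_{Th}^{Tg}$ in $\mathcal{X}$, which is what (iii) asserts; relating the two families of open sets is the purpose of the subsequent results and cannot be presupposed here. Your other stated route --- repeating the arguments of (i)--(ii) verbatim inside $SC_{1^-}^1(\mathcal{X})_{Th}$ --- is the correct one and is what the paper does. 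Apart from these two points, your treatment of (ii) (regular open sets for the middle equivalence, and a bump supported in a ball disjoint from $\mathrm{supp}_h(g)$ and meeting $\{f>h\}$ for the contrapositive) is sound, and is in fact slightly more direct than the paper's argument, which instead writes $\overline{\mathcal{V}_h^g}$ as an intersection of complements of basic open sets.
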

	\begin{proof}
	$(i)$ If $\mathcal{V}_h^f\cap \mathcal{V}_h^g =\emptyset$ and $u\in f\sqcap_h g$, then ${u(y)\leq f(y)\land g(y)}$ for all $y\in ({\mathcal{V}}_h^f)^c\cup ({\mathcal{V}}_h^g)^c=\mathcal{Y}$, so $u=h$.\\
			Conversely, suppose that ${\mathcal{V}}_h^f\cap {\mathcal{V}}_h^g\neq \emptyset$ and let $y\in {\mathcal{V}}_h^f\cap {\mathcal{V}}_h^g$. Since $y\in \mathrm{supp}_h (f),$ there exists a sequence $\{y_n\}\subset \{y:~f(y)>h(y)\}$, with $y_n\to y$. Then, there exists $N\in \mathbb{N}$ such that $y_N\in {\mathcal{V}}_h^f\cap {\mathcal{V}}_h^g$ and $f(y_N)>h(y_N)$. Since $y_N\in \mathrm{supp}_h(g),$ there exists a sequence $(y^j)\subset \{y:~ g(y)>h(y)\}$ such that $y^j\to y_N$, and there is $J\in \mathbb{N}$ such that $y^J\in (f-h)^{-1}(0,\infty)\cap {\mathcal{V}}_h^f\cap {\mathcal{V}}_h^g$, so $f(y^J)\lor g(y^J)>h(y^J)$. Taking a suitable bump function $b\in SC_{1^-}^1(\mathcal{Y})_0$ with positive value at $y^J$, we get that $h\lneq b+h \in f\sqcap_h g$, and therefore $f\sqcap_h g\neq\{h\}$.\smallskip

\noindent $(ii)$ Let $f,g\in SC_{1^-}^1(\mathcal{Y})_h$. Then we have
						\begin{align} f\sqsubset_h g\iff& \left[\forall u\in SC_{1^-}^1(\mathcal{Y})_h,~u\sqcap_hg=\{h\}\implies u\sqcap_h f=\{h\}\right],\nonumber\\
			\iff& \left[\forall u\in SC_{1^-}^1(\mathcal{Y})_h,~ {\mathcal{V}}_h^u\cap {\mathcal{V}}_h^g=\emptyset\implies {\mathcal{V}}_h^u\cap {\mathcal{V}}_h^f=\emptyset\right],\nonumber\\
			\iff& \left[\forall u\in SC_{1^-}^1(\mathcal{Y})_h,~ {\mathcal{V}}_h^g\subset \left({\mathcal{V}}_h^u\right)^c \implies {\mathcal{V}}_h^f \subset \left({\mathcal{V}}_h^u\right)^c\right].\label{eqn}
			\end{align}
			Clearly, ${\mathcal{V}}_h^f \subset {\mathcal{V}}_h^g$ implies $f\sqsubset_h g$. On the other hand, since $\mathcal{B}_h(\mathcal{Y})$ is a basis, we can express $\overline{{\mathcal{V}}_h^g}$ as an intersection of sets of the form $\left({\mathcal{V}}_h^u\right)^c$. Let $\mathcal{H}\subset SC_{1^-}^1(\mathcal{Y})_h$ such that $\overline{{\mathcal{V}}_h^g}=\displaystyle\bigcap_{u\in \mathcal{H}}\left({\mathcal{V}}_h^u\right)^c $. Then, using \eqref{eqn} we deduce:
			$$	f\sqsubset_h g \implies~\forall u\in \mathcal{H}~{\mathcal{V}}_h^g\subset \left({\mathcal{V}}_h^u\right)^c
						   \implies {\mathcal{V}}_h^f \subset \left({\mathcal{V}}_h^u\right)^c,$$
so that
			$${\mathcal{V}}_h^f\subset \overline{{\mathcal{V}}_h^g} \implies \overline{{\mathcal{V}}_h^f}\subset \overline{{\mathcal{V}}_h^g}\implies {\mathcal{V}}_h^f\subset {\mathcal{V}}_h^g,$$ since the elements of $\mathcal{B}_h(\mathcal{Y})$ are regular open sets (that is, they coincide with the interior of their closure). \smallskip
		
\noindent The proofs of $(iii)$ and $(iv)$ are analogous to the proofs of $(i)$ and $(ii)$ respectively, and will be omitted.
	\end{proof}
	\smallskip
	We have shown that inclusions between members of $\mathcal{B}_h(\mathcal{Y})$ (and $\mathcal{B}_h(\mathcal{X})$) can be described using the relation $\sqsubset_h$ on $SC_{1^-}^1(\mathcal{Y})_h$ (respectively $\sqsubset_{Th}$ on $SC_{1^-}^1(\mathcal{X})_h$), which depends only on the convex and order structure of $SC_{1^-}^1(\mathcal{Y})$ (respectively $SC_{1^-}^1(\mathcal{X})$), so we can use the isomorphism $T$ to relate inclusions between sets of each basis.
	\begin{proposition}
 Let $h\in SC_{1^-}^1(\mathcal{Y})$ and $f,g\in SC_{1^-}^1(\mathcal{Y})_h$. Then
		$$f\sqsubset_h g \iff Tf\sqsubset_{Th} Tg.$$
		Therefore, $${\mathcal{V}}_h^f\subset {\mathcal{V}}_h^g \iff \mathcal{U}_{Th}^{Tf}\subset \mathcal{U}_{Th}^{Tg}.$$
			\end{proposition}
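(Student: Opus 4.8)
The plan is to exploit the fact that both relations $\sqcap_h,\sqsubset_h$ (on $SC_{1^-}^1(\mathcal{Y})_h$) and $\sqcap_{Th},\sqsubset_{Th}$ (on $SC_{1^-}^1(\mathcal{X})_{Th}$) are defined purely in terms of the partial order together with the distinguished bottom element ($h$, respectively $Th$), so they are automatically transported by any order isomorphism that matches the bottom elements. Since $T$ is an isomorphism of convex partially ordered sets it is in particular order-preserving in both directions, hence $f\geq h \iff Tf\geq Th$ for every $f$; this shows that $T$ restricts to an order-preserving bijection $SC_{1^-}^1(\mathcal{Y})_h\to SC_{1^-}^1(\mathcal{X})_{Th}$ carrying $h$ to $Th$. (The convexity of $T$ is not needed for this particular statement.)

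First I would check that $T$ intertwines the two meet-type operations: for $f,g\in SC_{1^-}^1(\mathcal{Y})_h$ and any $u\in SC_{1^-}^1(\mathcal{Y})_h$, the conditions $u\leq f$ and $u\leq g$ are equivalent to $Tu\leq Tf$ and $Tu\leq Tg$, and $v:=Tu$ ranges over all of $SC_{1^-}^1(\mathcal{X})_{Th}$ as $u$ ranges over $SC_{1^-}^1(\mathcal{Y})_h$. Therefore $T(f\sqcap_h g)=Tf\sqcap_{Th} Tg$, and since $T(\{h\})=\{Th\}$ we get in particular
$$f\sqcap_h g=\{h\}\iff Tf\sqcap_{Th} Tg=\{Th\}.$$

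Next I would substitute this equivalence into the definition of $\sqsubset_h$. By definition $f\sqsubset_h g$ holds exactly when every $u\in SC_{1^-}^1(\mathcal{Y})_h$ with $u\sqcap_h g=\{h\}$ also satisfies $u\sqcap_h f=\{h\}$; applying the previous step to each such $u$ and writing $v=Tu$, this is equivalent to the statement that every $v\in SC_{1^-}^1(\mathcal{X})_{Th}$ with $v\sqcap_{Th} Tg=\{Th\}$ also satisfies $v\sqcap_{Th} Tf=\{Th\}$, i.e. to $Tf\sqsubset_{Th} Tg$. This establishes $f\sqsubset_h g\iff Tf\sqsubset_{Th} Tg$.

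Finally I would invoke Proposition~\ref{inc}: part (ii) gives $f\sqsubset_h g\iff {\mathcal{V}}_h^f\subset {\mathcal{V}}_h^g$ and part (iv) gives $Tf\sqsubset_{Th} Tg\iff \mathcal{U}_{Th}^{Tf}\subset \mathcal{U}_{Th}^{Tg}$, so chaining these three equivalences yields ${\mathcal{V}}_h^f\subset {\mathcal{V}}_h^g\iff \mathcal{U}_{Th}^{Tf}\subset \mathcal{U}_{Th}^{Tg}$. I do not expect a genuine obstacle here; the only point requiring a little care is the bookkeeping that $T$ sends $SC_{1^-}^1(\mathcal{Y})_h$ onto $SC_{1^-}^1(\mathcal{X})_{Th}$ and the bottom element $h$ to $Th$, which is immediate from $T$ being an order isomorphism and is in fact already recorded in the definition of $SC_{1^-}^1(\mathcal{X})_{Th}=T(SC_{1^-}^1(\mathcal{Y})_h)$.
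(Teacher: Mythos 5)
Your proposal is correct and follows essentially the same route as the paper: both arguments observe that $\sqcap_h$ and $\sqsubset_h$ are defined purely in terms of the order and the bottom element, which the order isomorphism $T$ transports (sending $h$ to $Th$ and $SC_{1^-}^1(\mathcal{Y})_h$ onto $SC_{1^-}^1(\mathcal{X})_{Th}$), and then conclude the set-inclusion statement from Proposition~\ref{inc}. Your explicit remark that convexity of $T$ is not needed here is a correct and harmless addition.
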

	\begin{proof}
	Let $f,g\in SC_{1^-}^1(\mathcal{Y})_h$. Using the properties of $T$, we obtain:
		\begin{align*}
		f\sqsubset_h g \iff& \left[ \forall u\in SC_{1^-}^1(\mathcal{Y})_h,~u\sqcap_h g=\{h\}\implies u\sqcap_h f=\{h\}\right],\\
		\iff& \left[ \begin{array}{l}
		     \forall u\in SC_{1^-}^1(\mathcal{Y})_h,~\{v\in SC_{1^-}^1(\mathcal{Y})_h:~v\leq u\land g\}=\{h\} \\\implies
		  \{v\in SC_{1^-}^1(\mathcal{Y})_h:~v\leq u\land f\}=\{h\}
		\end{array}\right],\\
		\iff& \left[\begin{array}{l}
		    \forall Tu\in SC_{1^-}^1(\mathcal{X})_{Th},~\{Tv\in SC_{1^-}^1(\mathcal{X})_{Th}:~Tv\leq Tu\land Tg\}=\{Th\} \\ \implies \{Tv\in SC_{1^-}^1(\mathcal{X})_{Th}:~Tv\leq Tu\land Tf\}=\{Th\}
		\end{array}\right],\\
		\iff&~ Tf\sqsubset_{Th} Tg.
		\end{align*}
		The second part of the statement follows directly using Proposition~\ref{inc}.
	\end{proof}
	\medskip
	\begin{corollary} For any $h\in SC_{1^-}^1(\mathcal{Y})$, the mapping $\mathcal{I}_h$ from Definition~\ref{I} is an order-preserving bijection, that is, for any $\mathcal{V}_1,\mathcal{V}_2\in \mathcal{B}_h(\mathcal{Y})$ $$\mathcal{V}_1\subset \mathcal{V}_2 \iff \mathcal{I}_h\left(\mathcal{V}_1\right)\subset\mathcal{I}_h(\mathcal{V}_2).$$
\end{corollary}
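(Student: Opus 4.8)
The plan is to unwind the definitions and quote the preceding Proposition; there is virtually nothing to do beyond that. By Proposition~\ref{base}, every member of $\mathcal{B}_h(\mathcal{Y})$ is of the form $\mathcal{V}_h^f$ for some $f\in SC_{1^-}^1(\mathcal{Y})_h$, and by Definition~\ref{I} the map $\mathcal{I}_h$ sends $\mathcal{V}_h^f$ to $\mathcal{U}_{Th}^{Tf}$. So, given $\mathcal{V}_1,\mathcal{V}_2\in\mathcal{B}_h(\mathcal{Y})$, I would fix representations $\mathcal{V}_1=\mathcal{V}_h^f$, $\mathcal{V}_2=\mathcal{V}_h^g$ with $f,g\in SC_{1^-}^1(\mathcal{Y})_h$, so that $\mathcal{I}_h(\mathcal{V}_1)=\mathcal{U}_{Th}^{Tf}$ and $\mathcal{I}_h(\mathcal{V}_2)=\mathcal{U}_{Th}^{Tg}$. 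The chain $\mathcal{V}_1\subset\mathcal{V}_2\iff\mathcal{V}_h^f\subset\mathcal{V}_h^g\iff\mathcal{U}_{Th}^{Tf}\subset\mathcal{U}_{Th}^{Tg}\iff\mathcal{I}_h(\mathcal{V}_1)\subset\mathcal{I}_h(\mathcal{V}_2)$ then follows immediately, the middle equivalence being exactly the second assertion of the preceding Proposition (which itself rests on $f\sqsubset_h g\iff Tf\sqsubset_{Th}Tg$ together with Proposition~\ref{inc}).

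There is essentially no obstacle. The only point that deserves a comment is that the representation of a basic open set as $\mathcal{V}_h^f$ need not be unique, so a priori one might worry whether $\mathcal{I}_h$ is well defined; but this is harmless and in fact also drops out of the preceding Proposition, since $\mathcal{V}_h^f=\mathcal{V}_h^g$ forces $\mathcal{U}_{Th}^{Tf}=\mathcal{U}_{Th}^{Tg}$ by applying the displayed equivalence in both directions. (Injectivity of $\mathcal{I}_h$ follows the same way and surjectivity is built into the description of $\mathcal{B}_h(\mathcal{X})$ in Proposition~\ref{base}, so the ``bijection'' part of the statement is recovered as well, consistently with Definition~\ref{I}.) Thus the corollary is just the translation of the preceding Proposition from the language of the relations $\sqsubset_h$, $\sqsubset_{Th}$ to the language of the bijection $\mathcal{I}_h$ between the two bases.
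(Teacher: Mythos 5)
Your proposal is correct and follows exactly the route the paper intends: the corollary is stated without proof precisely because it is the immediate translation of the preceding Proposition ($\mathcal{V}_h^f\subset\mathcal{V}_h^g\iff\mathcal{U}_{Th}^{Tf}\subset\mathcal{U}_{Th}^{Tg}$) into the language of $\mathcal{I}_h$. Your additional remark on well-definedness of $\mathcal{I}_h$ is a sensible observation but does not change the argument.
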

\smallskip
Next, we show that local inequalities between elements of $SC_{1^-}^1(\mathcal{Y})$ can be characterized via its convex-order structure. In what follows, we fix $h\in SC_{1^-}^1(\mathcal{Y})$ and for any $g\in SC_{1^-}^1(\mathcal{Y})_h$ and $\lambda\in [0,1]$, we set: $$g_\lambda := {\lambda g + (1-\lambda)h}.$$	
	
\begin{proposition}[Characterization of dominance on $\mathcal{V}_h^f$]\label{mayor}
		Let $h\in SC_{1^-}^1(\mathcal{Y})$ and $\varphi, \psi, f\in SC_{1^-}^1(\mathcal{Y})_h$. Then we have:
		$$\varphi \geq \psi \text{ on } {\mathcal{V}}_h^f \iff \forall \lambda\in [0,1],~\forall u\sqsubset_h f,~ \psi_\lambda \sqcap_h u \subset \varphi_\lambda\sqcap_h u.$$
	\end{proposition}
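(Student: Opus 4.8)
The plan is to prove both implications by translating the pointwise inequality ``$\varphi \geq \psi$ on $\mathcal{V}_h^f$'' into a statement about the convex-order structure, using the fact that $g_\lambda = \lambda g + (1-\lambda)h$ interpolates between $h$ and $g$, that $\mathcal{V}_h^{g_\lambda} = \mathcal{V}_h^g$ for $\lambda \in (0,1]$, and the characterizations from Proposition~\ref{inc}. The key observation underpinning everything is that for $w, z \in SC_{1^-}^1(\mathcal{Y})_h$, the condition $w \sqcap_h z \subset w'\sqcap_h z'$ (as subsets of $SC_{1^-}^1(\mathcal{Y})_h$) should encode that any smooth semi-Lipschitz function $u \geq h$ that lies below both $w$ and $z$ also lies below both $w'$ and $z'$. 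One needs to convert ``below $w$ and $z$ simultaneously'' into a pointwise statement on $\mathcal{V}_h^w \cap \mathcal{V}_h^z$ via a bump-function argument, exactly in the spirit of the proof of Proposition~\ref{inc}(i).

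\medskip
\textbf{($\Rightarrow$)} Assume $\varphi \geq \psi$ on $\mathcal{V}_h^f$. Fix $\lambda \in [0,1]$ and $u \sqsubset_h f$; by Proposition~\ref{inc}(ii) this means $\mathcal{V}_h^u \subset \mathcal{V}_h^f$, hence also $\mathrm{supp}_h(u) \subset \mathrm{supp}_h(f) \subset \overline{\mathcal{V}_h^f}$. Let $w \in \psi_\lambda \sqcap_h u$, so $h \leq w \leq \psi_\lambda$ and $w \leq u$ on $\mathcal{Y}$. On the complement of $\mathcal{V}_h^u$ we have $u = h$, hence $w = h \leq \varphi_\lambda$ there. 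On $\mathcal{V}_h^u \subset \mathcal{V}_h^f$, the hypothesis gives $\varphi \geq \psi$, so $\varphi_\lambda = \lambda\varphi + (1-\lambda)h \geq \lambda\psi + (1-\lambda)h = \psi_\lambda \geq w$; and of course $w \leq u$ everywhere. Therefore $w \in \varphi_\lambda \sqcap_h u$, establishing $\psi_\lambda \sqcap_h u \subset \varphi_\lambda \sqcap_h u$.

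\medskip
\textbf{($\Leftarrow$)} For the converse I argue by contraposition: suppose there is a point $y_0 \in \mathcal{V}_h^f$ with $\varphi(y_0) < \psi(y_0)$. Since $\varphi, \psi$ are continuous and $\mathcal{V}_h^f$ is open, there is an open neighborhood $W \subset \mathcal{V}_h^f$ of $y_0$ on which $\varphi < \psi$, and moreover $\psi(y) > h(y)$ fails to be guaranteed, so I will work with $\lambda$ close to $1$. The idea is to pick $\lambda \in (0,1)$ close enough to $1$ that $\psi_\lambda(y_0) > \varphi_\lambda(y_0)$ still holds (this is automatic since $\psi_\lambda - \varphi_\lambda = \lambda(\psi - \varphi)$), and then to construct $u \sqsubset_h f$ and a function $w \in \psi_\lambda \sqcap_h u$ with $w \notin \varphi_\lambda \sqcap_h u$. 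Using Proposition~\ref{cozero} and a bump function supported in a small ball $B \subset W$ around $y_0$, take $b \in SC_{1^-}^1(\mathcal{Y})_0$ with $b(y_0) > 0$, $\mathrm{supp}(b) \subset B$, and $\|b|_S$ small enough that $h + b \in SC_{1^-}^1(\mathcal{Y})_h$ and $\mathcal{V}_h^{h+b} \subset \mathcal{V}_h^f$, so that $u := h + b \sqsubset_h f$. Shrinking $b$ further (multiplying by a small constant) we may assume $b \leq \psi_\lambda - h$ pointwise on $B$, which since $b$ vanishes outside $B$ gives $h \leq h + b \leq \psi_\lambda$ and $h+b \leq u$ everywhere; thus $w := h + b \in \psi_\lambda \sqcap_h u$. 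But at $y_0$ we have $w(y_0) = h(y_0) + b(y_0) > h(y_0)$, while if $b(y_0)$ is small enough that $h(y_0) + b(y_0) \leq \psi_\lambda(y_0)$... I need $w(y_0) > \varphi_\lambda(y_0)$ to conclude $w \notin \varphi_\lambda \sqcap_h u$; this is where the strict inequality $\varphi_\lambda(y_0) < \psi_\lambda(y_0)$ is used: choose $b(y_0)$ so that $\varphi_\lambda(y_0) < h(y_0) + b(y_0) \leq \psi_\lambda(y_0)$, which is possible precisely when $\varphi_\lambda(y_0) < \psi_\lambda(y_0)$ — except one must also ensure $h(y_0) < \varphi_\lambda(y_0)$ is not required; if $h(y_0) \geq \varphi_\lambda(y_0)$ one takes any small $b(y_0) > 0$ and still gets $w(y_0) > h(y_0) \geq \varphi_\lambda(y_0)$, wait that gives $w(y_0)>\varphi_\lambda(y_0)$ only if the inequality is strict, which holds as $b(y_0)>0$. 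In either case $w(y_0) > \varphi_\lambda(y_0)$, so $w \notin \varphi_\lambda \sqcap_h u$, contradicting the right-hand side of the equivalence.

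\medskip
The main obstacle I anticipate is the bookkeeping in the ($\Leftarrow$) direction: one must simultaneously arrange that the bump-perturbed function $h+b$ stays in $SC_{1^-}^1(\mathcal{Y})_h$ (a semi-Lipschitz-constant bound, handled by scaling $b$), that its support region $\mathcal{V}_h^{h+b}$ sits inside $\mathcal{V}_h^f$ (so $u \sqsubset_h f$, handled by choosing $B$ small and inside $W \subset \mathcal{V}_h^f$), and that the value $b(y_0)$ threads the gap $(\varphi_\lambda(y_0), \psi_\lambda(y_0)]$ — and all three constraints are on the \emph{size} of $b$, so they are mutually compatible. Care is also needed because $\mathcal{V}_h^f$ is the interior of $\mathrm{supp}_h(f)$ rather than $\{f > h\}$ itself, so one should verify that the bump construction lands in the genuinely interior part; but since $W$ was chosen open inside $\mathcal{V}_h^f$, any ball $B \subset W$ is fine. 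The existence of bump functions with prescribed small semi-Lipschitz constant and value is guaranteed by Proposition~\ref{cozero} and the remark following Definition~\ref{bump}.
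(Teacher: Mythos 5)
Your forward implication is fine and matches the paper's (which dismisses it as straightforward). The backward implication, however, has a genuine gap, and it sits exactly where you wave it away at the end: the constraints on the bump $b$ are \emph{not} all upper bounds on its size, and they are not mutually compatible in general. To get $w=h+b\notin\varphi_\lambda\sqcap_h u$ you need $b(y_0)>\varphi_\lambda(y_0)-h(y_0)=\lambda\bigl(\varphi(y_0)-h(y_0)\bigr)$, which is a strictly positive \emph{lower} bound on $b(y_0)$ whenever $\varphi(y_0)>h(y_0)$; with your choice of $\lambda$ close to $1$ this lower bound is essentially $\varphi(y_0)-h(y_0)$, which can be arbitrarily large. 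A nonnegative function supported in a ball of radius $r$ and taking a value $c>0$ at its centre has semi-Lipschitz constant of order at least $c/r$, so the three requirements ``$\mathrm{supp}(b)\subset B$ with $B$ small'', ``$\|b|_S+\|h|_S<1$'' and ``$b(y_0)>\lambda(\varphi(y_0)-h(y_0))$'' cannot in general be met simultaneously. This is precisely the point where the $\forall\lambda\in[0,1]$ quantifier must be exploited nontrivially: the paper first fixes a \emph{small} bump $u=h+b$ with $u(y_0)=\alpha$ only slightly above $h(y_0)$, and only afterwards chooses $\lambda$ (small, not close to $1$) so that $\varphi_\lambda(y_0)<\alpha<\psi_\lambda(y_0)$; interpolating $\varphi$ and $\psi$ toward $h$ is what allows a small bump to separate them at $y_0$.

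A second, related defect: the step ``shrinking $b$ we may assume $b\leq\psi_\lambda-h$ pointwise on $B$'' is unjustified, because $\psi_\lambda-h=\lambda(\psi-h)$ may vanish at points of $B$ where $b>0$, in which case no positive rescaling of $b$ achieves this. The paper enforces the global inequality $\leq\psi_\lambda$ differently: it multiplies $b$ by a cutoff $\eta$ vanishing on $\overline{\{\psi_\lambda<u\}}$ with $\eta(y_0)=1$, sets $v=\eta b+h$, and then passes to a further interpolation $v_t$ to restore the semi-Lipschitz bound destroyed by the product $\eta b$. Your argument could be repaired along your own lines (shrink $B$ so that $\psi-h$ is bounded below on $\overline{B}$ by a constant close to $\psi(y_0)-h(y_0)$, take $\varepsilon=\max b$ small, and then choose $\lambda$ in the nonempty interval determined by $\varepsilon$, $\psi(y_0)-h(y_0)$ and $\varphi(y_0)-h(y_0)$), but as written the proof does not close.
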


	\begin{proof}
	The ``only if'' implication is straightforward. For the ``if'' implication, suppose there is $y_0\in {\mathcal{V}}_h^f$ such that ${\psi(y_0)>\varphi(y_0)}$. Then there is a symmetric ball $B$  containing $y_0$ such that ${\psi >\varphi}$ on $B$. Furthermore, we can take $u\in SC_{1^-}^1(\mathcal{Y})_h$ defined by $u=h+b$, with $b:\mathcal{Y}\to [0,\varepsilon]$ a $C^1$ Lipschitz bump function supported on $B$ such that  $b(y_0)=\varepsilon$, for some $\varepsilon>0$. With this, $u(y_0):=\alpha>h(y_0)$ and $y_0\in \overline{{\mathcal{V}}_h^u} \subset B \subset {\mathcal{V}}_h^f$. Without lost of generality, $\psi(y_0)> \alpha$. Let $\lambda\in [0,1]$ such that $\psi_\lambda (y_0)> \alpha > \varphi_\lambda(y_0)$, and let $\eta:Y\to [0,1]$ be a $C^1$ Lipschitz bump function such that $\eta|_{\overline{\{\psi_\lambda < u\}}} =0$, $\eta(y_0)=1$, and define $v=\eta b+h$. Note that $v$ is semi-Lipschitz, since for $y,y'\in \mathcal{Y}$ we have:
	\begin{align*}
	v(y')-v(y)&=\eta(y')b(y')+h(y')-\eta(y)b(y) -h(y)\\
	&=\eta(y')b(y') -\eta(y')b(y) +\eta(y')b(y)-\eta(y)b(y)+h(y')-h(y)\\
	&\leq \|\eta\|_\infty(b(y')-b(y))+\|b\|_\infty(\eta(y')-\eta(y))+\|h|_S\,{d_{\mathcal{Y}}(y,y')}\\
	&\leq (\|b\|_{\mathrm{Lip}}+ \varepsilon\|\eta\|_{\mathrm{Lip}}+\|h|_S)\,d_{\mathcal{Y}}(y,y').
	\end{align*}
	Choose $t\in[0,1]$ such that $v_t\in SC_{1^-}^1(\mathcal{Y})$. Since for $g\in SC_{1^-}^1(\mathcal{Y})$ and $\lambda\in [0,1]$,
	$$(g_\lambda)_t=tg_\lambda +(1-t)h=\lambda tg+(1-\lambda t)h=g_{\lambda t},$$
	we get that $v_t\in SC_{1^-}^1(\mathcal{Y})_h$, $v_t\sqsubset_h f$ (since $\mathrm{supp}_h(v_t)\subset \mathrm{supp}(b)\subset B\subset \mathrm{supp}_h(f)$), $v_t(y)\leq \psi_{\lambda t}(y)$ for all $y\in Y$, and finally $v_t(y_0)=u_t(y_0)>\varphi_{\lambda t}(y_0)$. Therefore, $v_t\in \left(\psi_{\lambda t}\sqcap_h v_t \right)\setminus \left(\varphi_{\lambda t}\sqcap_h v_t\right)$, a contradiction.
\end{proof}

We now state the following useful lemma.
\begin{lemma}[Transfer principle] \label{lema1}
		Let $h\in SC_{1^-}^1(\mathcal{Y})$ and $\varphi, \psi, f\in SC_{1^-}^1(\mathcal{Y})_h$. Then:
		$$\varphi \geq \psi \text{ on } {\mathcal{V}}_h^f \iff T\varphi \geq T\psi \text{ on } \mathcal{U}_{Th}^{Tf}.$$
\end{lemma}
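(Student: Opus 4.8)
The plan is to reduce the statement to Proposition~\ref{mayor} together with the fact that the operations appearing in its right-hand side ($g\mapsto g_\lambda$, the relation $\sqsubset_h$, and the meet-sets $\sqcap_h$) are all expressed purely in terms of the convex and order structure of $SC_{1^-}^1(\mathcal{Y})$, hence are transported verbatim by the isomorphism $T$.

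First I would apply Proposition~\ref{mayor} on the $\mathcal{Y}$-side: $\varphi\geq\psi$ on $\mathcal{V}_h^f$ if and only if for all $\lambda\in[0,1]$ and all $u\sqsubset_h f$ one has $\psi_\lambda\sqcap_h u\subset \varphi_\lambda\sqcap_h u$. Next I would observe that $T$ is affine on $SC_{1^-}^1(\mathcal{Y})$ with $T$ fixing convex combinations (property~(ii) of an isomorphism of convex partially ordered sets); since $h_\lambda=\lambda h+(1-\lambda)h=h$ and more generally $T(g_\lambda)=T(\lambda g+(1-\lambda)h)=\lambda Tg+(1-\lambda)Th=(Tg)_{\lambda}^{Th}$, the dilation operation $g\mapsto g_\lambda$ is intertwined by $T$ with the corresponding operation $g'\mapsto \lambda g'+(1-\lambda)Th$ on the $\mathcal{X}$-side. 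Likewise $T$ maps $SC_{1^-}^1(\mathcal{Y})_h$ bijectively onto $SC_{1^-}^1(\mathcal{X})_{Th}$ (order preservation), and since $\sqcap_h$ and $\sqsubset_h$ are defined solely through $\leq$ and through the element $h$, the already-proved equivalences $f\sqsubset_h g\iff Tf\sqsubset_{Th}Tg$ (the Proposition just before the Corollary on $\mathcal{I}_h$) and the evident $T(\varphi_\lambda\sqcap_h u)=(T\varphi)_\lambda\sqcap_{Th}Tu$ give a term-by-term translation. Concretely: $u\sqsubset_h f\iff Tu\sqsubset_{Th}Tf$, and $\psi_\lambda\sqcap_h u\subset\varphi_\lambda\sqcap_h u\iff (T\psi)_\lambda\sqcap_{Th}Tu\subset(T\varphi)_\lambda\sqcap_{Th}Tu$, where on the right $(T\psi)_\lambda$ abbreviates $\lambda\,T\psi+(1-\lambda)Th$. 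Quantifying over all $u\sqsubset_h f$ on the left is the same as quantifying over all $v\sqsubset_{Th}Tf$ on the right, because $u\mapsto Tu$ is a bijection $SC_{1^-}^1(\mathcal{Y})_h\to SC_{1^-}^1(\mathcal{X})_{Th}$ carrying one condition onto the other. Hence the right-hand side of Proposition~\ref{mayor} for $(\varphi,\psi,f,h)$ holds if and only if the analogous right-hand side for $(T\varphi,T\psi,Tf,Th)$ holds.

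Finally I would invoke Proposition~\ref{mayor} again, this time on the $\mathcal{X}$-side applied to the isomorphism $T^{-1}$ (or, equivalently, just note that Proposition~\ref{mayor} holds for $\mathcal{X}$ intrinsically), to conclude that the $\mathcal{X}$-side condition is equivalent to $T\varphi\geq T\psi$ on $\mathcal{U}_{Th}^{Tf}$. Stringing the three equivalences together yields $\varphi\geq\psi$ on $\mathcal{V}_h^f\iff T\varphi\geq T\psi$ on $\mathcal{U}_{Th}^{Tf}$, as claimed. The one point requiring a little care is the bookkeeping that the subscript-$\lambda$ operation really is preserved on the nose — i.e. that $(Tg)_\lambda$ in the $\mathcal{X}$-version of Proposition~\ref{mayor} (which is $\lambda\,Tg+(1-\lambda)\,Th$) coincides with $T(g_\lambda)$; this is exactly affinity of $T$ plus $Th=Th$, so it is immediate, but it must be stated since the definition of $g_\lambda$ is anchored at the fixed base point $h$ (resp. $Th$). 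I do not anticipate a genuine obstacle here: the lemma is essentially a formal "transfer" corollary of Proposition~\ref{mayor} and the convex-order invariance of all the auxiliary notions, and the proof is a short chain of "if and only if"s.
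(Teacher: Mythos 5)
Your proposal is correct and follows essentially the same route as the paper: both apply Proposition~\ref{mayor} on each side and observe that the right-hand characterization (the relations $\sqsubset_h$, the meet-sets $\sqcap_h$, and the operation $g\mapsto g_\lambda$) is expressed purely in the convex-order structure, hence transported by $T$, with the key identity $(T\varphi)_\lambda=T(\varphi_\lambda)$. Your extra care about the base point $h$ versus $Th$ in the $\lambda$-operation matches exactly the one computation the paper records.
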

	
\begin{proof} It follows from Proposition~\ref{mayor}, since the right side of the equivalence depends only on the convex and order structure of $SC_{1^-}^1(\mathcal{Y})$, which is preserved by $T$, so for any $u,v,f,g\in SC_{1^-}^1(\mathcal{Y})_h$ we have:
$${u\sqsubset_h f} \iff Tu\sqsubset_h Tf \quad \text{and}\quad v\in f\sqcap_h g \iff Tv\in Tf\sqcap_h Tg.$$
Therefore $(T\varphi)_\lambda=T(\varphi_\lambda),$ for any $\varphi\in SC_{1^-}^1(\mathcal{Y})_h$ and $\lambda\in [0,1]$.
	\end{proof}
	\medskip
	Next, we show that the basis $\mathcal{B}_h(\mathcal{Y})$, $ \mathcal{B}_h(\mathcal{X})$ and the bijection $\mathcal{I}_h$ are independent of $h$.
\begin{proposition}[Independence of the topological basis from $h$]\label{h}
        Let $h\in SC_{1^-}^1(\mathcal{Y})$. Then \smallskip \newline
$\rm(i).\,$ $\mathcal{B}_h(\mathcal{Y})=\mathcal{B}_0(\mathcal{Y}):=\mathcal{B}(\mathcal{Y})$ \smallskip \\
$\rm(ii).$ $\mathcal{B}_h(\mathcal{X})=\mathcal{B}_0(\mathcal{X}):=\mathcal{B}(\mathcal{X}).$
 \end{proposition}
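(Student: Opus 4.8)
The plan is to prove $\mathcal{B}_h(\mathcal{Y}) = \mathcal{B}_0(\mathcal{Y})$ by showing that every basic set $\mathcal{V}_h^f$ (with $f \in SC^1_{1^-}(\mathcal{Y})_h$) coincides with some basic set $\mathcal{V}_0^g$ (with $g \in SC^1_{1^-}(\mathcal{Y})_0$), and conversely. The key observation is that the open set $\mathcal{V}_h^f = \mathrm{int}\,\overline{\{f > h\}}$ is detected by the \emph{difference} $f - h$, not by $f$ and $h$ separately: indeed $\{y : f(y) > h(y)\} = \{y : (f-h)(y) > 0\}$. So the natural candidate is $g := f - h$ (or a suitable rescaling of it to land back in $SC^1_{1^-}(\mathcal{Y})$). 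First I would verify that $g = f - h$ is smooth and nonnegative, hence $g \in SC^1_{1^-}(\mathcal{Y})_0$ \emph{provided} $\|g|_S < 1$; since $f$ and $h$ are only known to have semi-Lipschitz constants strictly below $1$, the difference $f - h$ could have constant up to $2$, so I cannot use $f-h$ directly. The fix is the scaling trick already used repeatedly in the paper (e.g.\ in Proposition~\ref{isomorph} and Proposition~\ref{base}): replace $g$ by $t g$ for a small $t \in (0,1)$ with $\|tg|_S < 1$. Scaling by a positive constant does not change the zero-set, hence $\mathcal{V}_0^{tg} = \mathcal{V}_0^{g} = \mathrm{int}\,\overline{\{f > h\}} = \mathcal{V}_h^f$. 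This gives the inclusion $\mathcal{B}_h(\mathcal{Y}) \subset \mathcal{B}_0(\mathcal{Y})$.

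For the reverse inclusion $\mathcal{B}_0(\mathcal{Y}) \subset \mathcal{B}_h(\mathcal{Y})$, I would run the symmetric argument: given $g \in SC^1_{1^-}(\mathcal{Y})_0$, I want $f \in SC^1_{1^-}(\mathcal{Y})_h$ with $\{f > h\} = \{g > 0\}$, i.e.\ $f - h$ should have the same zero-set as $g$. The candidate is $f := h + tg$ for $t \in (0,1)$ small enough that $\|h + tg|_S \le \|h|_S + t\|g|_S < 1$; this is possible since $\|h|_S < 1$ strictly. Then $f \ge h$, $f$ is smooth, $f \in SC^1_{1^-}(\mathcal{Y})_h$, and $\{f > h\} = \{tg > 0\} = \{g > 0\}$, so $\mathcal{V}_h^f = \mathcal{V}_0^g$. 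Combining the two inclusions yields (i). Part (ii), $\mathcal{B}_h(\mathcal{X}) = \mathcal{B}_0(\mathcal{X})$, follows by the same argument applied directly on $\mathcal{X}$ to the functions $Th \in SC^1_{1^-}(\mathcal{X})$ and $Tf \in SC^1_{1^-}(\mathcal{X})_{Th}$; alternatively one transports through $T$: since $T$ is an affine order-isomorphism, $T(SC^1_{1^-}(\mathcal{Y})_h) = SC^1_{1^-}(\mathcal{X})_{Th}$ and $Tf - Th = T(f-h) - T0$ by affineness, so the set $\mathcal{U}_{Th}^{Tf} = \mathrm{int}\,\overline{\{Tf > Th\}}$ is again governed by a difference, and the same scaling argument on $\mathcal{X}$ applies verbatim.

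The only genuine subtlety is making sure the scaling parameter $t$ can always be chosen, which is immediate because every function involved has semi-Lipschitz constant \emph{strictly} less than $1$ — this is precisely why the paper works with $SC^1_{1^-}$ rather than $SC^1_1$; taking $t < \min\{1, (1-\|h|_S)/\|g|_S\}$ (with the convention $t<1$ if $g$ is constant) does the job, and multiplying a function by a positive scalar neither affects smoothness nor the sign pattern, hence neither the set $\{\cdot > 0\}$ nor its regularized interior. The main obstacle, such as it is, is purely bookkeeping: one must be careful that $\mathrm{int}\,\overline{(\cdot)}$ is insensitive to positive rescaling (true since rescaling is a bijection preserving the order and hence the open sets $\{f>h\}$ exactly) and that $h + tg \ge h$ pointwise (true since $t > 0$ and $g \ge 0$). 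No deeper structural input — neither bicompleteness, nor second countability, nor connectedness — is needed for this particular statement; it is a soft consequence of the affine and order structure together with the strict inequality defining $SC^1_{1^-}$.
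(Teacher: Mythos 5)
There is a genuine gap in your first inclusion, $\mathcal{B}_h(\mathcal{Y})\subset\mathcal{B}_0(\mathcal{Y})$. You propose to take $g=f-h$ and rescale, on the grounds that $\|f-h|_S$ ``could be up to $2$'' and is therefore tamed by a factor $t$. That bound is only valid in the symmetric (reversible) setting. In a non-reversible Finsler manifold the forward semi-Lipschitz constant is asymmetric: $\|f-h|_S\leq \|f|_S+\|-h|_S$, and $\|-h|_S$ (equivalently $\sup_x\|{-dh(x)}|_F$) is \emph{not} controlled by $\|h|_S<1$; it can be infinite, e.g.\ on a Randers-type structure $F(x,v)=|v|-a(x)v$ where the asymmetry degenerates at infinity. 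If $\|f-h|_S=+\infty$, no positive rescaling $t(f-h)$ lands in $SC^1_{1^-}(\mathcal{Y})_0$, so this direction of the argument collapses. The paper avoids the difference entirely: it observes that $\{f>h\}$ is open and invokes Proposition~\ref{cozero} to manufacture a \emph{fresh} function $g\in SC^1_{1^-}(\mathcal{Y})_0$ with $\{g>0\}=\{f>h\}$ (and $\|dg\|_\infty<1$, hence a genuine control on the asymmetric constant). That co-zero-set proposition is the structural input you claim is unnecessary; it is in fact the essential one. Your reverse inclusion, via $f:=h+tg$ with $t$ small, is exactly the paper's argument and is correct, precisely because there you only \emph{add} a nonnegative semi-Lipschitz function, so $\|h+tg|_S\leq\|h|_S+t\|g|_S<1$ causes no asymmetry issue.

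The same defect infects your treatment of (ii). Running ``the same argument on $\mathcal{X}$'' again requires Proposition~\ref{cozero} on $\mathcal{X}$ (the paper uses it to get $g\in SC^1_{1^-}(\mathcal{X})_0$ with $\{g>0\}=\{Tf>Th\}$, then pulls $\lambda g+Th$ back through $T$ using surjectivity of $T$ onto $SC^1_{1^-}(\mathcal{X})_{Th}$). Your alternative route via ``$Tf-Th=T(f-h)-T0$ by affineness'' is doubly problematic: $T$ is only known to preserve convex combinations with coefficients in $[0,1]$, so this identity does not follow from the hypotheses without an argument keeping every intermediate function inside the domain, and $f-h$ may again fail to lie in $SC^1_{1^-}(\mathcal{Y})$ at all.
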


\begin{proof}
 $\rm(i).$ Let $\mathcal{V}_0^f \in \mathcal{B}(\mathcal{X})$. Since $\|h|_S<1$, there is $\lambda\in (0,1]$ such that $\lambda f+h\in SC_{1^-}^1(\mathcal{Y})$, so $\{f>0\}=\{\lambda f+h>h\}$, and therefore $\mathcal{V}_0^f=\mathcal{V}_h^{\lambda f+h}\in \mathcal{B}_h(\mathcal{Y})$.  Conversely, let ${\mathcal{V}}_h^g\in \mathcal{B}_h(\mathcal{Y})$. Since the set $\{g>h\}:=\{y\in\mathcal{Y}:\,g(y)>h(y)\}$ is open, by Proposition~\ref{cozero} there exists $f\in SC_{1^-}^1(\mathcal{Y})_0$ such that $\{f>0\}=\{g>h\}$, so ${\mathcal{V}}_h^g=\mathcal{V}_0^f\in \mathcal{B}(\mathcal{Y})$.\smallskip

\noindent $\rm(ii).$ Let $\mathcal{U}_{T0}^{Tf} \in \mathcal{B}(X)$, and $g\in SC_{1^-}^1(\mathcal{X})_0$ such that ${\{Tf>T0\}=\{g>0\}}$. Take $\lambda\in (0,1]$ such that $\lambda g +Th\in SC_{1^-}^1(\mathcal{X})$, and since $\lambda g +Th\geq Th$, there exists $\tilde{f}\in SC_{1^-}^1(\mathcal{Y})_h$ such that $T\tilde{f}=\lambda g + Th$. Therefore, $\{Tf>T0\}=\{T\tilde{f}>Th\}$ and $\mathcal{U}_{T0}^{Tf}=\mathcal{U}_{Th}^{T\tilde{f}}\in \mathcal{B}_h(\mathcal{X})$. Conversely, let $\mathcal{U}_{Th}^{Tf}\in \mathcal{B}_h(\mathcal{X})$ and $g\in SC_{1^-}^1(\mathcal{X})_0$ such that $\{Tf>Th\}=\{g>0\}$. Taking $\lambda\in (0,1]$ such that $\lambda g +T0\in SC_{1^-}^1(\mathcal{X})$, we get that $\mathcal{U}_{Th}^{Tf}=\mathcal{U}_{T0}^{\lambda g+T0}\in \mathcal{B}(\mathcal{X})$.
	\end{proof}
The following result completes the transfer principle of Lemma~\ref{lema1}:
\begin{proposition}\label{prop}
     Let $h\in SC_{1^-}^1(\mathcal{Y})$ and $U\in \mathcal{B}(\mathcal{X})$. Then $\mathcal{V}=\mathcal{I}_h^{-1}(\mathcal{U})$ is the only element in $\mathcal{B}(\mathcal{Y})$ such that for any ${\varphi,\psi\in SC_{1^-}^1(\mathcal{Y})_h}$
    \begin{eqnarray}\label{eqn2}
    \varphi\geq \psi \text{ on }\mathcal{V} \iff T\varphi \geq T\psi \text{ on } \mathcal{U}.
    \end{eqnarray}
\end{proposition}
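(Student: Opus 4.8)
The plan is to split the proposition into an existence part --- the set $\mathcal{V}=\mathcal{I}_h^{-1}(\mathcal{U})$ does satisfy~\eqref{eqn2} --- and a uniqueness part, the latter carrying all the weight. For existence, I would first invoke Proposition~\ref{h}(ii) to write $\mathcal{U}=\mathcal{U}_{Th}^{Tf}$ for some $f\in SC_{1^-}^1(\mathcal{Y})_h$ (this is legitimate since $\mathcal{I}_h$ is a bijection, Definition~\ref{I}), so that $\mathcal{V}:=\mathcal{I}_h^{-1}(\mathcal{U})=\mathcal{V}_h^f$ is a well-defined element of $\mathcal{B}_h(\mathcal{Y})=\mathcal{B}(\mathcal{Y})$ by Proposition~\ref{h}(i). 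For this particular $\mathcal{V}$, the equivalence~\eqref{eqn2} is precisely the transfer principle of Lemma~\ref{lema1}.

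The core of the proof is uniqueness. Suppose $\mathcal{V}'\in\mathcal{B}(\mathcal{Y})$ also satisfies~\eqref{eqn2}; combining this with Lemma~\ref{lema1} applied to $\mathcal{V}$ produces the symmetric statement that, for all $\varphi,\psi\in SC_{1^-}^1(\mathcal{Y})_h$, one has $\varphi\ge\psi$ on $\mathcal{V}$ if and only if $\varphi\ge\psi$ on $\mathcal{V}'$. It then remains to deduce $\mathcal{V}=\mathcal{V}'$ from this property alone, using that $\mathcal{V}$ and $\mathcal{V}'$ are regular open sets (as members of $\mathcal{B}_h(\mathcal{Y})$, just as in the proof of Proposition~\ref{inc}). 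I would argue by contradiction: assuming, say, $\mathcal{V}'\not\subset\overline{\mathcal{V}}$, choose $y_0\in\mathcal{V}'\setminus\overline{\mathcal{V}}$, a symmetric ball $B$ around $y_0$ disjoint from the closed set $\overline{\mathcal{V}}$, and --- exactly as in the proof of Proposition~\ref{base} --- a bump function $b\in SC_{1^-}^1(\mathcal{Y})_0$ with $\mathrm{supp}(b)\subset B$, $b(y_0)>0$ and $\|h|_S+\|b|_S<1$. Setting $\varphi=h$ and $\psi=h+b$ (both in $SC_{1^-}^1(\mathcal{Y})_h$), one has $\varphi\ge\psi$ on $\mathcal{V}$ since $b$ vanishes off $B$ and $\mathcal{V}\cap B=\emptyset$, whereas $\psi(y_0)>\varphi(y_0)$ shows $\varphi\not\ge\psi$ on $\mathcal{V}'$; this contradicts the symmetric property above. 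Hence $\mathcal{V}'\subset\overline{\mathcal{V}}$, and since $\mathcal{V}'$ is open and $\mathcal{V}=\mathrm{int}(\overline{\mathcal{V}})$, we get $\mathcal{V}'\subset\mathcal{V}$. Interchanging the roles of $\mathcal{V}$ and $\mathcal{V}'$ (the symmetric property is invariant under this swap) gives $\mathcal{V}\subset\mathcal{V}'$, and therefore $\mathcal{V}=\mathcal{V}'$.

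I expect the only genuinely delicate point to be the construction of the separating pair $(\varphi,\psi)$: the functions involved must be $C^1$-smooth with semi-Lipschitz constant $\|\cdot|_S$ strictly below $1$, which forces one to perturb the fixed base function $h$ by a \emph{sufficiently small} bump rather than by an arbitrary function --- the same mechanism already used in Propositions~\ref{base} and~\ref{mayor}. Once this pair is in hand, the rest is a routine interplay between Lemma~\ref{lema1} and elementary facts about regular open sets.
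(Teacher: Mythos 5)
Your proposal is correct and follows essentially the same route as the paper: existence via the transfer principle of Lemma~\ref{lema1}, and uniqueness by contradiction using the regular-openness of the basis elements together with a small semi-Lipschitz bump $b$ added to $h$ and supported in a ball contained in the symmetric difference. The only (cosmetic) deviation is that you separate the two inclusions and use a single bump $\psi=h+b$ against $\varphi=h$, whereas the paper places the ball in $\mathcal{V}\setminus\tilde{\mathcal{V}}$ and uses a pair of functions with crossing inequalities to contradict \eqref{eqn2} in one stroke.
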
	
\begin{proof}
Lemma~\ref{lema1} ensures us that $\mathcal{V}$ satisfies \eqref{eqn2}. Let $\tilde{\mathcal{V}}\neq \mathcal{V}$ in $\mathcal{B}(\mathcal{Y})$ satisfying the same property. Without loss of generality, $\mathcal{V}\setminus \tilde{\mathcal{V}}\neq \emptyset$. Since both sets are regular open sets, there exists $y\in \mathcal{V}\setminus\tilde{\mathcal{V}}$ and $\varepsilon>0$ such that the symmetric ball  $B(y,\varepsilon):=B$ is contained in $\mathcal{V}\setminus\tilde{\mathcal{V}}$. Given $y_1\neq y_2$ in $B$, we can take $\varphi,\psi \in SC_{1^-}^1(\mathcal{Y})_h$ such that $\mathrm{supp}_h(\varphi)\cup\mathrm{supp}_h(\psi)\subset B$, $\varphi(y_1)<\psi(y_1)$ and $\psi(y_2)<\varphi(y_2)$. Therefore $\varphi \ngeq \psi$ on $B\subset \mathcal{V}$, but $\varphi \geq \psi$ on $\tilde{\mathcal{V}}$, which contradicts \eqref{eqn2}.
\end{proof}	
\medskip
Using the above, we show the independence of the bijection (\textit{c.f.} Definition~\ref{I}) from $h$ for a particular case. (The general case will be given in Corollary~\ref{cor-ind}.)
\begin{proposition}\label{prop1}
 Let $h_1,h_2\in SC_{1^-}^1(\mathcal{Y})$, such that $~h_1\leq h_2$. Then $\mathcal{I}_{h_1}=\mathcal{I}_{h_2}$.
\end{proposition}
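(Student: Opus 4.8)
The plan is to show that for any $U\in\mathcal{B}(\mathcal{X})$, the set $\mathcal{I}_{h_1}^{-1}(U)$ and the set $\mathcal{I}_{h_2}^{-1}(U)$ coincide, by exploiting the uniqueness clause in Proposition~\ref{prop}. Fix $U\in\mathcal{B}(\mathcal{X})$ and write $\mathcal{V}_i=\mathcal{I}_{h_i}^{-1}(U)\in\mathcal{B}(\mathcal{Y})$ for $i=1,2$. By Lemma~\ref{lema1} applied with the base function $h_i$, we know that for all $\varphi,\psi\in SC_{1^-}^1(\mathcal{Y})_{h_i}$ we have $\varphi\ge\psi$ on $\mathcal{V}_i$ if and only if $T\varphi\ge T\psi$ on $U$. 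The obstruction is that these two transfer principles are stated for different cones of test functions, namely $SC_{1^-}^1(\mathcal{Y})_{h_1}$ and $SC_{1^-}^1(\mathcal{Y})_{h_2}$, so one cannot immediately invoke the uniqueness part of Proposition~\ref{prop}. The key observation that resolves this is that the hypothesis $h_1\le h_2$ gives the inclusion $SC_{1^-}^1(\mathcal{Y})_{h_2}\subset SC_{1^-}^1(\mathcal{Y})_{h_1}$: if $f\ge h_2$ then automatically $f\ge h_1$.

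First I would verify that $\mathcal{V}_2$ (the set associated to the larger base function $h_2$) also satisfies the characterization relative to $h_1$. Precisely, I claim that for all $\varphi,\psi\in SC_{1^-}^1(\mathcal{Y})_{h_1}$, one has $\varphi\ge\psi$ on $\mathcal{V}_2$ iff $T\varphi\ge T\psi$ on $U$. The point is that the statement $\varphi\ge\psi$ on an open set, and the statement $T\varphi\ge T\psi$ on an open set, do not really depend on which cone $\varphi,\psi$ are taken from — only on the functions themselves. More carefully: given arbitrary $\varphi,\psi\in SC_{1^-}^1(\mathcal{Y})_{h_1}$, choose $\mu\in(0,1]$ small enough that $\varphi_\mu':=\mu\varphi+(1-\mu)h_2$ and $\psi_\mu':=\mu\psi+(1-\mu)h_2$ both lie in $SC_{1^-}^1(\mathcal{Y})$ (possible since $\|h_2|_S<1$); note these belong to $SC_{1^-}^1(\mathcal{Y})_{h_2}$. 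Then $\varphi\ge\psi$ on $\mathcal{V}_2$ iff $\varphi_\mu'\ge\psi_\mu'$ on $\mathcal{V}_2$, which by Lemma~\ref{lema1} (with base $h_2$) holds iff $T\varphi_\mu'\ge T\psi_\mu'$ on $U$. Since $T$ is affine and $T(\varphi_\mu')=\mu T\varphi+(1-\mu)Th_2$ and similarly for $\psi$, the last inequality is equivalent to $T\varphi\ge T\psi$ on $U$. Thus $\mathcal{V}_2$ satisfies \eqref{eqn2} relative to $h_1$.

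Now both $\mathcal{V}_1$ and $\mathcal{V}_2$ lie in $\mathcal{B}(\mathcal{Y})$ and both satisfy the characterization \eqref{eqn2} of Proposition~\ref{prop} with base function $h_1$. The uniqueness clause of that proposition then forces $\mathcal{V}_1=\mathcal{V}_2$, i.e. $\mathcal{I}_{h_1}^{-1}(U)=\mathcal{I}_{h_2}^{-1}(U)$. Since $U\in\mathcal{B}(\mathcal{X})$ was arbitrary and, by Proposition~\ref{h}, the domains and codomains of $\mathcal{I}_{h_1}$ and $\mathcal{I}_{h_2}$ are the same families $\mathcal{B}(\mathcal{Y})$ and $\mathcal{B}(\mathcal{X})$, we conclude $\mathcal{I}_{h_1}^{-1}=\mathcal{I}_{h_2}^{-1}$ and hence $\mathcal{I}_{h_1}=\mathcal{I}_{h_2}$. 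The main obstacle, as noted, is purely the bookkeeping of cones: once one realizes that testing an inequality on an open set is insensitive to rescaling toward $h_2$ and that the affine map $T$ commutes with such rescaling, the argument reduces to an application of the uniqueness already established. A symmetric remark handles the case where instead $\mathcal{V}_2\setminus\mathcal{V}_1\ne\emptyset$, but since we are proving equality of two sets both pinned down by the same uniqueness statement, no separate case analysis is actually needed.
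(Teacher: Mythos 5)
Your overall strategy (pin down both $\mathcal{I}_{h_1}^{-1}(\mathcal{U})$ and $\mathcal{I}_{h_2}^{-1}(\mathcal{U})$ by the uniqueness clause of Proposition~\ref{prop}) is the right one and is exactly what the paper does, but your execution has a genuine gap at the step where you claim that $\varphi_\mu':=\mu\varphi+(1-\mu)h_2$ and $\psi_\mu':=\mu\psi+(1-\mu)h_2$ belong to $SC_{1^-}^1(\mathcal{Y})_{h_2}$. For $\varphi\in SC_{1^-}^1(\mathcal{Y})_{h_1}$ you only know $\varphi\geq h_1$, so
$\mu\varphi+(1-\mu)h_2\geq \mu h_1+(1-\mu)h_2$, and since $h_1\leq h_2$ this lower bound is $\leq h_2$, not $\geq h_2$. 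Indeed, taking $\varphi=h_1$ gives $\varphi_\mu'=\mu h_1+(1-\mu)h_2<h_2$ wherever $h_1<h_2$, so $\varphi_\mu'\notin SC_{1^-}^1(\mathcal{Y})_{h_2}$. Consequently you cannot invoke Lemma~\ref{lema1} with base $h_2$ for these functions, and the chain of equivalences establishing that $\mathcal{V}_2$ satisfies \eqref{eqn2} relative to the \emph{larger} cone $SC_{1^-}^1(\mathcal{Y})_{h_1}$ breaks down. (One cannot rescue it by applying Lemma~\ref{lema1} with the intermediate base $\mu h_1+(1-\mu)h_2$ either: that lemma would then speak about $\mathcal{I}_{\mu h_1+(1-\mu)h_2}^{-1}(\mathcal{U})$ rather than $\mathcal{V}_2$, which is circular.)

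The inclusion you correctly identified, $SC_{1^-}^1(\mathcal{Y})_{h_2}\subset SC_{1^-}^1(\mathcal{Y})_{h_1}$, should be used in the opposite direction: instead of enlarging the class of test functions for $\mathcal{V}_2$, restrict the class of test functions for $\mathcal{V}_1$. Concretely, for $\varphi,\psi\in SC_{1^-}^1(\mathcal{Y})_{h_2}$ one has, by Lemma~\ref{lema1} with base $h_2$, that $\varphi\geq\psi$ on $\mathcal{V}_2$ iff $T\varphi\geq T\psi$ on $\mathcal{U}$; and since these same $\varphi,\psi$ automatically lie in $SC_{1^-}^1(\mathcal{Y})_{h_1}$, Lemma~\ref{lema1} with base $h_1$ gives that $T\varphi\geq T\psi$ on $\mathcal{U}$ iff $\varphi\geq\psi$ on $\mathcal{V}_1$. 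Thus both $\mathcal{V}_1$ and $\mathcal{V}_2$ satisfy \eqref{eqn2} with test functions ranging over $SC_{1^-}^1(\mathcal{Y})_{h_2}$, and the uniqueness part of Proposition~\ref{prop} (whose proof only uses bump perturbations above the fixed base function, hence applies with $h=h_2$) yields $\mathcal{V}_1=\mathcal{V}_2$. This is precisely the paper's argument; with that one directional correction your proof closes.
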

\begin{proof}
Let $\mathcal{U}\in \mathcal{B}(X)$, and let ${\varphi, \psi \in SC_{1^-}^1(\mathcal{Y})_{h_2}}$ such that $\varphi \geq \psi$ on $\mathcal{V}_2:=\mathcal{I}_{h_2}^{-1}(\mathcal{U})$. By Lemma~\ref{lema1}, $T\varphi\geq T\psi$ on $\mathcal{U}$, and since  $h_1\leq h_2$, $\varphi, \psi \in SC_{1^-}^1(\mathcal{Y})_{h_1}$, and by Lemma~\ref{lema1} $\varphi \geq \psi $ on $\mathcal{V}_1:=\mathcal{I}_{h_1}^{-1}(\mathcal{U})$. Hence, by Proposition~\ref{prop}, $\mathcal{V}_1=\mathcal{V}_2$, and therefore, $\mathcal{I}_{h_1}=\mathcal{I}_{h_2}$.
\end{proof}
\medskip
\begin{corollary}[Independence of the bijection from $h$] \label{cor-ind}
 Let $h\in SC_{1^-}^1(\mathcal{Y})$. Then
 $$\mathcal{I}_h=\mathcal{I}_0:=\mathcal{I}.$$
\end{corollary}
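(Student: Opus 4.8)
The plan is to reduce Corollary~\ref{cor-ind} to the already-established Proposition~\ref{prop1}, which handles the case of two comparable functions. So the task amounts to showing that $\mathcal{I}_h = \mathcal{I}_0$ for an arbitrary $h \in SC_{1^-}^1(\mathcal{Y})$, and the natural device is to interpolate through a common comparable element. First I would observe that since $\|h|_S < 1$, the function $h_0 := h \wedge 0$ is \emph{not} available as a comparison point inside $SC_{1^-}^1(\mathcal{Y})$ (the minimum destroys smoothness), so instead I would take a \emph{smooth} lower bound: using Corollary~\ref{smoothslip} (smooth approximation of semi-Lipschitz functions), choose $\tilde h \in SC_{1^-}^1(\mathcal{Y})$ with $\tilde h \leq h$ and $\tilde h \leq 0$ on all of $\mathcal{Y}$. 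Concretely, one approximates the (continuous, semi-Lipschitz) function $\min(h,0) - 1$ by a smooth semi-Lipschitz $\tilde h$ within error $1$ and with $\|\tilde h|_S$ still $< 1$; then $\tilde h \leq \min(h,0) \leq h$ and $\tilde h \leq \min(h,0) \leq 0$.

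With such a $\tilde h$ in hand, the conclusion is immediate: by Proposition~\ref{prop1} applied to the comparable pair $\tilde h \leq h$ we get $\mathcal{I}_{\tilde h} = \mathcal{I}_h$, and by Proposition~\ref{prop1} applied to the comparable pair $\tilde h \leq 0$ we get $\mathcal{I}_{\tilde h} = \mathcal{I}_0$. Combining the two equalities yields $\mathcal{I}_h = \mathcal{I}_0 =: \mathcal{I}$, which is exactly the assertion. Note that one must check $\mathcal{I}_{\tilde h}$, $\mathcal{I}_h$ and $\mathcal{I}_0$ are maps between the \emph{same} pair of sets, but this is guaranteed by Proposition~\ref{h}, which already established $\mathcal{B}_h(\mathcal{Y}) = \mathcal{B}(\mathcal{Y})$ and $\mathcal{B}_h(\mathcal{X}) = \mathcal{B}(\mathcal{X})$ for every $h$; so all three bijections have domain $\mathcal{B}(\mathcal{Y})$ and range $\mathcal{B}(\mathcal{X})$, and comparing them makes sense.

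The only genuinely non-routine point is the construction of the common smooth lower bound $\tilde h$, i.e.\ making sure that the approximation in Corollary~\ref{smoothslip} can be arranged to lie \emph{below} a prescribed continuous semi-Lipschitz function while keeping the semi-Lipschitz constant strictly below $1$. This is handled by the standard trick of approximating $g := \min(h,0)$ within a uniform error and then subtracting that error: if $|\,\tilde h_0 - g\,| \leq \tfrac12$ everywhere with $\|\tilde h_0|_S \leq \|g|_S + r < 1$ for suitably small $r$, then $\tilde h := \tilde h_0 - \tfrac12$ satisfies $\tilde h \leq g \leq h$, $\tilde h \leq g \leq 0$, and $\|\tilde h|_S = \|\tilde h_0|_S < 1$, so $\tilde h \in SC_{1^-}^1(\mathcal{Y})$ as required; here one uses that $\|g|_S = \|\min(h,0)|_S \leq \max(\|h|_S, 0) = \|h|_S < 1$ since the minimum of two semi-Lipschitz functions is semi-Lipschitz with constant at most the maximum of the two. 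Everything else is a direct two-line deduction from Propositions~\ref{h} and~\ref{prop1}.
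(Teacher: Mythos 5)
Your proof is correct and takes essentially the same route as the paper: the paper builds a smooth common \emph{upper} bound $g\geq h\vee 0$ by approximating $h\vee 0$ from above via Corollary~\ref{smoothslip} and applies Proposition~\ref{prop1} to the pairs $h\leq g$ and $0\leq g$, while you build a smooth common \emph{lower} bound $\tilde h\leq h\wedge 0$ and apply Proposition~\ref{prop1} to $\tilde h\leq h$ and $\tilde h\leq 0$. Both are legitimate instances of Proposition~\ref{prop1}, and your verification that the shifted approximant remains in $SC_{1^-}^1(\mathcal{Y})$ (using $\|h\wedge 0|_S\leq\|h|_S<1$) is sound.
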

\begin{proof}
Consider $h\lor 0\in\mathrm{SLip}_1(\mathcal{Y})$ and note that $\|h\lor 0|_S\leq \|h|_S<1$. Take $\eta>0$ such that $\|h|_S+\eta< 1$ and $g:\mathcal{Y}\to \mathbb{R}$ a semi-Lipschitz $C^1$-smooth approximation given by Corollary~\ref{smoothslip}, using $\varepsilon=\frac{\eta}{2}$ and $r=\eta$. Replacing $g$ by $g+\varepsilon$ we get an approximation from above of $h\lor 0$, that is:
$$g\geq h\lor 0, \quad \|g|_S\leq \|h\lor 0|_S +\eta< 1 \quad \text{and }\,  \,\,g(y)-(h\lor 0)(y)\leq \eta, \,\,\forall y\in \mathcal{Y}.$$
It follows that $g\in SC_{1^-}^1(\mathcal{Y})$, $g\geq h$ and $g\geq 0$. By Proposition~\ref{prop1}, $\mathcal{I}_h=\mathcal{I}_g=\mathcal{I}_0$.
\end{proof}
\medskip
Thanks to this result, we can simply work with the basis $\mathcal{B}(\mathcal{Y})$ and $\mathcal{B}(\mathcal{X})$ (without fixing a function $h$) and with the bijection $\mathcal{I}:\mathcal{B}(\mathcal{Y})\to \mathcal{B}(\mathcal{X})$. The following lemma, established in \cite[Lemma 6]{CC11} is paramount for our considerations.

\begin{lemma}[Key Lemma] \label{lema6} Let $(X,d_{X})$ and $(Y,d_Y)$ be complete metric spaces, and let $B(X)$ and $B(Y)$ be basis for their topologies. If $\mathcal{I}: B(Y)\to B(X)$ is a inclusion-preserving bijection, then there exist dense subsets $X'\subset X$, $Y'\subset Y$ and an homeomorphism $\tau:X'\to Y'$ such that for every $x\in X'$ and $\mathcal{V}\in B(Y)$ it holds:
$$\tau(x)\in \mathcal{V} \iff x\in \mathcal{I}(\mathcal{V}).$$
\end{lemma}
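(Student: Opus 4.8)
The plan is to reconstruct the point $\tau(x)$ for each $x$ from the neighborhood filter of $x$ transported through $\mathcal{I}$, and to show this assignment is well-defined and bijective on suitable dense subsets. Concretely, for a point $x\in X$ consider the family $\mathcal{F}_x=\{\mathcal{I}^{-1}(\mathcal{U}) : \mathcal{U}\in B(X),\ x\in \mathcal{U}\}$ of elements of $B(Y)$. Since $\mathcal{I}$ preserves inclusions (in both directions, being a bijection), $\mathcal{F}_x$ is a filter base in $Y$: any two sets $\mathcal{I}^{-1}(\mathcal{U}_1),\mathcal{I}^{-1}(\mathcal{U}_2)$ with $x\in\mathcal{U}_1\cap\mathcal{U}_2$ contain a common smaller basic set, because one can pick $\mathcal{U}_3\in B(X)$ with $x\in\mathcal{U}_3\subset\mathcal{U}_1\cap\mathcal{U}_2$, and then $\mathcal{I}^{-1}(\mathcal{U}_3)\subset\mathcal{I}^{-1}(\mathcal{U}_1)\cap\mathcal{I}^{-1}(\mathcal{U}_2)$. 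The natural candidate for $\tau(x)$ is then the unique point lying in $\bigcap_{\mathcal{U}\ni x}\overline{\mathcal{I}^{-1}(\mathcal{U})}$, where closures are taken in $Y$.

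First I would show that for each $x$ in a dense subset $X'$ of $X$, this intersection is a single point. Since $Y$ is a complete metric space, it suffices to produce, inside $\mathcal{F}_x$, a decreasing sequence of basic sets whose closures have diameters tending to $0$; then completeness gives a unique limit point. The obstruction is that a priori the sets in $\mathcal{F}_x$ need not shrink in diameter — $\mathcal{I}$ is only order-theoretic and carries no metric information. The standard device (this is exactly the content of \cite[Lemma 6]{CC11}) is a Baire-category argument: one works with those $x$ for which one can extract such a shrinking chain, shows this set $X'$ is a dense $G_\delta$ (or at least dense) in $X$, and symmetrically defines $Y'\subset Y$ via $\mathcal{I}^{-1}$. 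For $x\in X'$ one sets $\tau(x)$ to be the unique point of the intersection. The defining equivalence $\tau(x)\in\mathcal{V}\iff x\in\mathcal{I}(\mathcal{V})$ for $\mathcal{V}\in B(Y)$ then follows: if $x\in\mathcal{I}(\mathcal{V})$ then $\mathcal{V}\in\mathcal{F}_x$ so $\tau(x)\in\overline{\mathcal{V}}$, and refining shows in fact $\tau(x)\in\mathcal{V}$ since $\mathcal{V}$ is open and $\mathcal{F}_x$ is cofinal below it; conversely if $x\notin\mathcal{I}(\mathcal{V})$, density of $X'$ lets one find $x'\in X'$, close to $x$, with $x'\in\mathcal{I}(\mathcal{V}')$ for some $\mathcal{V}'\in B(Y)$ disjoint from $\mathcal{V}$ (using regularity of the basis), forcing $\tau(x)\notin\mathcal{V}$.

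Next I would verify that $\tau:X'\to Y'$ is a bijection and a homeomorphism. Injectivity: if $\tau(x_1)=\tau(x_2)$ with $x_1\neq x_2$, pick disjoint basic sets $\mathcal{U}_1\ni x_1$, $\mathcal{U}_2\ni x_2$ in $X$; then $\mathcal{I}^{-1}(\mathcal{U}_1)$ and $\mathcal{I}^{-1}(\mathcal{U}_2)$ are disjoint (inclusion-preserving bijections preserve disjointness, via Proposition-type arguments on $\sqsubset$), yet both contain the common point $\tau(x_1)=\tau(x_2)$ — contradiction. Surjectivity onto $Y'$ and continuity of $\tau$ and $\tau^{-1}$: the equivalence $\tau(x)\in\mathcal{V}\iff x\in\mathcal{I}(\mathcal{V})$ says precisely that $\tau$ pulls back the basic open sets $\mathcal{V}$ to the basic open sets $\mathcal{I}(\mathcal{V})$, so $\tau$ is continuous and open as a map between the trace topologies, hence a homeomorphism; running the same construction with $\mathcal{I}^{-1}$ in place of $\mathcal{I}$ yields the inverse homeomorphism and identifies its domain with $Y'$.

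The main obstacle is genuinely the first step: turning the purely combinatorial, inclusion-preserving bijection $\mathcal{I}$ into a metric statement (singleton intersections, controlled diameters) requires completeness of the metric spaces together with a Baire-category selection of the dense sets $X',Y'$ on which everything is well-behaved. Since this is precisely \cite[Lemma 6]{CC11}, I would invoke that reference for the technical core and only sketch the filter/closure construction above to motivate why the statement holds; the homeomorphism and bijectivity bookkeeping is then routine.
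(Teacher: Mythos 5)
The paper does not prove this lemma at all: it is quoted verbatim from the literature with the citation \cite[Lemma~6]{CC11}, which is exactly what you ultimately do after your sketch. Your filter-base outline is a reasonable reconstruction of the cited argument and contains no claim at odds with it, so your treatment is essentially the same as the paper's.
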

\medskip
Since we deal with Finsler manifolds $\mathcal{X}$ and $\mathcal{Y}$ which are bicomplete, we can apply Lemma~\ref{lema6} to the underlying complete metric spaces $(\mathcal{X},d_{\mathcal{X}}^s)$ and $(\mathcal{Y},d_{\mathcal{Y}}^s)$ to obtain:
\begin{corollary}[Homeomorphism of dense subsets]\label{densos} Let $\mathcal{X}$, $\mathcal{Y}$ bicomplete Finsler manifolds. There exist dense subsets for the symmetrized topologies $\mathcal{X}'\subset \mathcal{X}$, $\mathcal{Y}'\subset \mathcal{Y}$  and an homeomorphism $\tau:\mathcal{X}'\to \mathcal{Y}'$ such that for any $x\in \mathcal{X}'$ and $\mathcal{V}\in\mathcal{B}(\mathcal{Y})$,
\begin{equation}\label{topo}
\tau(x)\in \mathcal{V}\iff x\in \mathcal{I}(\mathcal{V}).
\end{equation}
\end{corollary}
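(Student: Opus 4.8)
The plan is to obtain Corollary~\ref{densos} as a direct application of the Key Lemma (Lemma~\ref{lema6}), so the proof reduces to checking that its hypotheses hold in the present setting. First I would record that, by the very definition of bicompleteness, the symmetrized metric spaces $(\mathcal{X},d_{\mathcal{X}}^s)$ and $(\mathcal{Y},d_{\mathcal{Y}}^s)$ are complete. Next, recalling from Remark~\ref{topfinsler} that on a Finsler manifold the symmetric topology coincides with the manifold topology, and combining Proposition~\ref{base} with Proposition~\ref{h}, I would observe that $\mathcal{B}(\mathcal{X})$ and $\mathcal{B}(\mathcal{Y})$ are bases for the topologies of $(\mathcal{X},d_{\mathcal{X}}^s)$ and $(\mathcal{Y},d_{\mathcal{Y}}^s)$ respectively. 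Finally, I would recall that $\mathcal{I}=\mathcal{I}_0:\mathcal{B}(\mathcal{Y})\to\mathcal{B}(\mathcal{X})$ (see Definition~\ref{I} and Corollary~\ref{cor-ind}) is a bijection and that, by the order-preservation of $\mathcal{I}_h$ established above, it satisfies $\mathcal{V}_1\subset\mathcal{V}_2\iff\mathcal{I}(\mathcal{V}_1)\subset\mathcal{I}(\mathcal{V}_2)$; thus $\mathcal{I}$ is an inclusion-preserving bijection between the two bases.

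With these three facts in hand, I would apply Lemma~\ref{lema6} to the complete metric spaces $(\mathcal{X},d_{\mathcal{X}}^s)$ and $(\mathcal{Y},d_{\mathcal{Y}}^s)$, with bases $\mathcal{B}(\mathcal{X})$, $\mathcal{B}(\mathcal{Y})$ and the inclusion-preserving bijection $\mathcal{I}$. This yields dense subsets $\mathcal{X}'\subset\mathcal{X}$ and $\mathcal{Y}'\subset\mathcal{Y}$ (dense for the symmetric, hence manifold, topologies) together with a homeomorphism $\tau:\mathcal{X}'\to\mathcal{Y}'$ such that $\tau(x)\in\mathcal{V}\iff x\in\mathcal{I}(\mathcal{V})$ for every $x\in\mathcal{X}'$ and every $\mathcal{V}\in\mathcal{B}(\mathcal{Y})$, which is precisely~\eqref{topo}. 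I would close by making explicit that ``dense'' and ``homeomorphism'' in the statement are understood with respect to $\mathcal{T}(d_{\mathcal{X}}^s)$ and $\mathcal{T}(d_{\mathcal{Y}}^s)$, which by Remark~\ref{topfinsler} are the manifold topologies.

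Since the substantive work has already been carried out in the preceding results — the construction of the bases and their independence from $h$ (Propositions~\ref{base} and~\ref{h}), and the order-preservation and $h$-independence of $\mathcal{I}$ (Corollary~\ref{cor-ind}) — I do not expect any genuine obstacle at this step; the corollary is a straightforward transcription of Lemma~\ref{lema6} into the bicomplete Finsler framework. The only delicate ingredient, namely the Baire-category construction of the pointwise map $\tau$ on a dense set out of an inclusion-preserving bijection of bases, is entirely absorbed into the cited Lemma~\ref{lema6}, so nothing of that kind needs to be reproved here.
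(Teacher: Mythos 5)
Your proposal is correct and follows exactly the paper's route: the authors likewise obtain the corollary as an immediate application of Lemma~\ref{lema6} to the complete metric spaces $(\mathcal{X},d_{\mathcal{X}}^s)$ and $(\mathcal{Y},d_{\mathcal{Y}}^s)$, using $\mathcal{B}(\mathcal{X})$, $\mathcal{B}(\mathcal{Y})$ and the inclusion-preserving bijection $\mathcal{I}$ furnished by the preceding results. Your write-up is in fact slightly more explicit than the paper's one-line justification in checking each hypothesis of the Key Lemma.
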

\smallskip
\subsection{Pointwise behaviour of the isomorphism of convex partially ordered sets}
The following result will allow us to deduce information about the pointwise behavior of the isomorphism $T$.

\begin{corollary}\label{cor1}
Let $f,g\in SC_{1^-}^1(\mathcal{Y})$, $\mathcal{X}'\subset \mathcal{X}$ the dense subset of Corollary~\ref{densos} and $x_0\in \mathcal{X}'$. Then, $$f(\tau(x_0))=g(\tau(x_0))\iff Tf(x_0)=Tg(x_0),$$
		\noindent where $\tau:\mathcal{X}'\to \mathcal{Y}'$ is the homeomorphism of Corollary~\ref{densos}.
		
\end{corollary}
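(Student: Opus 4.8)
The plan is to prove the apparently stronger one-sided implication
$$f(\tau(x_0))=g(\tau(x_0))\ \Longrightarrow\ Tf(x_0)\ge Tg(x_0),$$
and then to observe that, since this hypothesis is symmetric in $f$ and $g$, exchanging the roles of $f$ and $g$ yields $Tg(x_0)\ge Tf(x_0)$ as well, hence $Tf(x_0)=Tg(x_0)$. The converse implication ($Tf(x_0)=Tg(x_0)\Rightarrow f(\tau(x_0))=g(\tau(x_0))$) would then follow by applying the first implication to the isomorphism $T^{-1}$, to the homeomorphism $\tau^{-1}\colon\mathcal{Y}'\to\mathcal{X}'$ of Corollary~\ref{densos}, and to the point $\tau(x_0)\in\mathcal{Y}'$: if $(Tf)(\tau^{-1}(\tau(x_0)))=(Tg)(\tau^{-1}(\tau(x_0)))$, i.e.\ $Tf(x_0)=Tg(x_0)$, then $T^{-1}(Tf)(\tau(x_0))=T^{-1}(Tg)(\tau(x_0))$, that is $f(\tau(x_0))=g(\tau(x_0))$.

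The first ingredient I would establish is a pointwise transfer of strict dominance: \emph{for $p,q\in SC_{1^-}^1(\mathcal{Y})$, if $p(\tau(x_0))>q(\tau(x_0))$ then $Tp(x_0)\ge Tq(x_0)$.} Since $p$ and $q$ are Lipschitz (hence continuous) for $d_{\mathcal{Y}}^s$, whose topology is the manifold topology by Remark~\ref{topfinsler}, the set $\{p>q\}$ is open and contains $\tau(x_0)$; by Proposition~\ref{base} I would choose $\mathcal{V}\in\mathcal{B}(\mathcal{Y})$ with $\tau(x_0)\in\mathcal{V}\subset\{p>q\}$. Next, exactly as in the proof of Corollary~\ref{cor-ind}, I would produce a common lower bound $h\in SC_{1^-}^1(\mathcal{Y})$ with $h\le p$ and $h\le q$, by applying Corollary~\ref{smoothslip} to the semi-Lipschitz function $p\wedge q$ (which satisfies $\|p\wedge q|_S<1$) and subtracting a suitable constant. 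By Proposition~\ref{h} one may write $\mathcal{V}=\mathcal{V}_h^{f'}$ for some $f'\in SC_{1^-}^1(\mathcal{Y})_h$, and $\mathcal{U}_{Th}^{Tf'}=\mathcal{I}(\mathcal{V})$ by Corollary~\ref{cor-ind}. Since $p\ge q$ on $\mathcal{V}$, the Transfer Principle (Lemma~\ref{lema1}), applied with $\varphi=p$, $\psi=q$ and distinguished function $f'$, gives $Tp\ge Tq$ on $\mathcal{I}(\mathcal{V})$; and since $x_0\in\mathcal{X}'$ and $\tau(x_0)\in\mathcal{V}$, Corollary~\ref{densos} yields $x_0\in\mathcal{I}(\mathcal{V})$, whence $Tp(x_0)\ge Tq(x_0)$.

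To conclude, assume $f(\tau(x_0))=g(\tau(x_0))$. For every integer $n\ge1$ the function $f+\tfrac1n$ belongs to $SC_{1^-}^1(\mathcal{Y})$ and $(f+\tfrac1n)(\tau(x_0))>g(\tau(x_0))$, so the previous step gives $T\bigl(f+\tfrac1n\bigr)(x_0)\ge Tg(x_0)$. On the other hand $f+\tfrac1n=\tfrac1n(f+1)+\bigl(1-\tfrac1n\bigr)f$ is a convex combination, so by convexity of $T$ we have $T\bigl(f+\tfrac1n\bigr)(x_0)=\tfrac1n\,T(f+1)(x_0)+\bigl(1-\tfrac1n\bigr)Tf(x_0)\longrightarrow Tf(x_0)$ as $n\to\infty$. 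Hence $Tf(x_0)\ge Tg(x_0)$, and the symmetry reduction of the first paragraph completes the argument.

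I expect the delicate point to be exactly the pointwise transfer in the second paragraph: because $T$ preserves only the non-strict order, and because the homeomorphism of Corollary~\ref{densos} is available merely on dense subsets, the transfer principle cannot be expected to carry strict pointwise inequalities across, and it only delivers $Tp(x_0)\ge Tq(x_0)$ from $p(\tau(x_0))>q(\tau(x_0))$. The device that then recovers the \emph{equality} statement of the corollary is to perturb $f$ by constants tending to $0$ and to use the affine (convex) structure of $T$, rather than just its monotonicity, to pass to the limit.
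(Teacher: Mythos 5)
Your argument is correct, but it follows a genuinely different route from the paper's. The paper reduces the corollary to the strict implication ``$Tf(x_0)>Tg(x_0)\Rightarrow f(\tau(x_0))>g(\tau(x_0))$'' and proves it by contradiction using the $C^1$ structure in an essential way: if $f\geq g$ near $y_0=\tau(x_0)$ with $f(y_0)=g(y_0)$, then $y_0$ is a local minimum of $f-g$, so $df(y_0)=dg(y_0)$; one then builds an auxiliary $\varphi\in SC^1_{1^-}(\mathcal{Y})_h$ with $\varphi(y_0)=f(y_0)$ but $d\varphi(y_0)\neq df(y_0)$, so that every neighborhood of $y_0$ meets both $\{\varphi>f\}$ and $\{\varphi<g\}$, and the transfer principle applied along sequences of basic open sets sandwiches $T\varphi(x_0)$ between $Tf(x_0)$ and $Tg(x_0)$, a contradiction. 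You instead prove the weaker pointwise transfer $p(\tau(x_0))>q(\tau(x_0))\Rightarrow Tp(x_0)\geq Tq(x_0)$ by a single application of Lemma~\ref{lema1} on a basic open set inside $\{p>q\}$, and then recover the equality statement by perturbing with the constants $\tfrac1n$ and exploiting the affine identity $T(f+\tfrac1n)=\tfrac1n T(f+1)+(1-\tfrac1n)Tf$ to pass to the limit. Your route is shorter and more elementary: it never uses differentiability (no critical points, no auxiliary function with a prescribed derivative), only the order, convexity and topological machinery already in place, and it would survive in a merely Lipschitz setting; the paper's route avoids the limiting procedure and yields the strict implication directly, which is what makes the ``remaining implication by the same argument'' immediate. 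One point you should make explicit in your converse step: applying the forward implication to $T^{-1}$ with the homeomorphism $\tau^{-1}$ and the same dense sets $\mathcal{Y}'$, $\mathcal{X}'$ is legitimate because both Lemma~\ref{lema1} and the relation \eqref{topo} of Corollary~\ref{densos} are stated as equivalences, so the basis bijection and homeomorphism attached to $T^{-1}$ are exactly $\mathcal{I}^{-1}$ and $\tau^{-1}$; this is a one-line verification, not a gap, but it is doing real work in your argument.
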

\begin{proof}
We need to ensure that we can apply Lemma~\ref{lema1}. To this end, take $\varepsilon>0$ such that $\|f|_S\lor\|g|_S+\varepsilon<1$, and let $h$ be a $C^1$-smooth semi-Lipschitz approximation of $f\land g$ such that $h\leq f\land g$ and
 $\|h|_S\leq \|f|_S\lor\|g|_S+\varepsilon<1$. Set $y_0=\tau(x_0)$. It suffices to prove that $Tf(x_0)>Tg(x_0)$ implies $f(y_0)>g(y_0)$. Suppose ${Tf(x_0)>Tg(x_0)}$ and $f(y_0)\leq g(y_0)$. As the relation $Tf> Tg$ is satisfied on a neighborhood of $x_0$, the relation $f\geq g$ is satisfied on a neighborhood of $y_0$. Therefore, $f(y_0)=g(y_0)$ and $y_0$ is a local minimum of the function $f-g$, so $df(y_0)=dg(y_0).$ Let $\varphi\in SC_{1^-}^1(\mathcal{Y})_h$ such that $\varphi(y_0)=f(y_0)$ and $d\varphi(y_0)\neq df(y_0)$. Every neighborhood of $y_0$ contains points (and basic open sets) where $\varphi>f$ and where $\varphi<g$, and both types of points can be taken on the dense set $\mathcal{Y}'$. Taking sequences of these point converging to $y_0$ and applying Lemma~\ref{lema1} on the corresponding basic open sets satisfying the desired inequalities (and the respective basic neighborhoods of the preimages by $\tau$ of the elements of the sequences), it follows by continuity that  $Tf(x_0)\leq T\varphi(x_0)\leq Tg(x_0)$, a contradiction. The remaining implication follows by the same argument.
\end{proof}
\medskip

We shall now show that the convexity property of the isomorphism $T$ determine how its action on the constant functions.
\begin{proposition}[Action of $T$ on the constant functions]\label{ctes}
Let $g\in SC_{1^-}^1(\mathcal{Y})$. Then $Tg-T0$ is constant if and only if $g$ is constant. Moreover, there exists $\alpha>0$ such that
	$$T\lambda = T0+{\alpha}^{-1}{\lambda},\quad\forall \lambda\in \mathbb{R}.$$
\end{proposition}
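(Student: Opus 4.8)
The plan is to use the transfer principle (Lemma~\ref{lema1}) and the homeomorphism $\tau$ from Corollary~\ref{densos} together with Corollary~\ref{cor1} to reduce the statement to a one-dimensional affine functional equation. First I would prove that if $g\in SC_{1^-}^1(\mathcal{Y})$ is constant, say $g\equiv\lambda$, then $Tg-T0$ is constant. The key observation is that for a constant function, the pointwise value at every point of $\mathcal{Y}'$ equals the same number, so Corollary~\ref{cor1} forces $Tg$ and $Tg'$ to coincide at exactly the same points of $\mathcal{X}'$ whenever $g,g'$ are two constants with $g=g'$; more precisely, I would compare $g\equiv\lambda$ with the function $0$: the set where $g(\tau(x))=0\cdot\text{(value)}$... rather, the cleanest route is to note that for constants $\lambda_1\le\lambda_2$ we have $\lambda_1\le\lambda_2$ everywhere, hence $T\lambda_1\le T\lambda_2$ everywhere by order preservation, and by Corollary~\ref{cor1} applied with $f=\lambda_1$, $g=\lambda_2$, the functions $T\lambda_1$ and $T\lambda_2$ are never equal on $\mathcal{X}'$ when $\lambda_1<\lambda_2$ (since $\lambda_1<\lambda_2$ everywhere on $\mathcal{Y}'$). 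Combining with convexity: $T\big(\tfrac12\lambda_1+\tfrac12\lambda_2\big)=\tfrac12 T\lambda_1+\tfrac12 T\lambda_2$, and iterating dyadically plus using order-monotonicity (squeezing) shows $\mu\mapsto T\mu(x)$ is affine and continuous in $\mu$ for each fixed $x\in\mathcal{X}'$, say $T\mu(x)=T0(x)+\beta(x)\mu$ with $\beta(x)>0$.

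Next I would argue that the slope $\beta(x)$ is independent of $x$. Here convexity is the workhorse. Since $T$ preserves convex combinations, for any $f\in SC_{1^-}^1(\mathcal{Y})$ and constant $\mu$ and $\lambda\in[0,1]$ we have $T(\lambda f+(1-\lambda)\mu)=\lambda Tf+(1-\lambda)T\mu=\lambda Tf+(1-\lambda)T0+(1-\lambda)\beta(\cdot)\mu$. On the other hand, $\lambda f+(1-\lambda)\mu$ and $f$ can be compared pointwise on $\mathcal{Y}'$, and one can also write $f$ itself as a limit of such combinations; playing off two different constants $\mu_1\neq\mu_2$ and using that $T$ is a well-defined single-valued map forces $\beta$ to be locally constant. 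Concretely, fix $x_0\in\mathcal{X}'$ with $\tau(x_0)=y_0$, and choose $f\in SC_{1^-}^1(\mathcal{Y})$ with a strict local extremum at $y_0$; the identity $T(\lambda f+(1-\lambda)\mu)(x_0)=\lambda Tf(x_0)+(1-\lambda)T0(x_0)+(1-\lambda)\beta(x_0)\mu$ combined with Corollary~\ref{cor1} (which pins down where $T(\lambda f+(1-\lambda)\mu)$ meets $Tf$) shows the value of $\beta$ at $x_0$ is dictated by a global relation, and by connectedness of $\mathcal{X}$ and continuity of all functions involved, $\beta(x)\equiv\beta$ is a positive constant. Setting $\alpha:=\beta^{-1}$ gives $T\lambda=T0+\alpha^{-1}\lambda$ for $\lambda\in\mathbb{R}$, first on $\mathcal{X}'$ and then on all of $\mathcal{X}$ by density and continuity.

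For the converse direction of the equivalence — if $Tg-T0$ is constant then $g$ is constant — I would apply the same reasoning to $T^{-1}$, which is again an isomorphism of convex partially ordered sets by hypothesis: if $Tg=T0+c$ for a constant $c$, then $g=T^{-1}(T0+c)=T^{-1}(T0)+\alpha c=0+\alpha c$ (using the formula just established for $T^{-1}$, whose constant-slope is the reciprocal), which is constant.

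The main obstacle I anticipate is proving that the slope $\beta(x)$ is genuinely independent of the point $x$, rather than merely locally affine in $\mu$ with an $x$-dependent slope. The subtlety is that $T$ acts on functions, not on pointwise values, so one cannot directly ``differentiate in $\mu$ at a point''; the argument must route through the convexity identity applied to non-constant test functions $f$ and the rigidity provided by Corollary~\ref{cor1} (equivalently Lemma~\ref{lema1}), exploiting that the locus $\{Tf=T\mu\}$ transforms correctly under $\mathcal{I}$. Once that rigidity is in place, connectedness of $\mathcal{X}$ closes the gap between ``locally constant'' and ``constant''.
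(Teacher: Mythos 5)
Your reduction of the problem to showing that the slope $\beta(x):=T1(x)-T0(x)$ is a positive constant is the right way to frame the difficulty, and your first step is essentially fine: in fact convexity alone already gives $T\mu=T0+\mu\,(T1-T0)$ for \emph{every} $\mu\in\mathbb{R}$, by writing $0$ and $1$ as convex combinations of $\mu$ and a second constant, and Corollary~\ref{cor1} plus order preservation gives $\beta>0$ on $\mathcal{X}'$. But the step you yourself flag as the main obstacle is genuinely missing. Corollary~\ref{cor1} only yields the pointwise identity $Tf(x)=T0(x)+\beta(x)\,f(\tau(x))$ on $\mathcal{X}'$; this is a consistency relation satisfied by \emph{any} positive function $\beta$, so ``playing off two constants'' together with single-valuedness of $T$ cannot force $\beta$ to be locally constant, and there is then nothing for connectedness to propagate. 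Constancy of $\beta$ is in fact \emph{false} for general convex-order isomorphisms of function cones: on the cone of all $C^1$ functions, $Tf=\beta f$ with $\beta$ smooth, positive and non-constant preserves order and convex combinations and is a bijection, yet $T\mu-T0=\mu\beta$ is non-constant. Hence any correct proof must use the defining bound $\|\cdot|_S<1$ of $SC^1_{1^-}$, which your argument for this step never invokes.

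That is exactly what the paper does: setting $g^\lambda:=T^{-1}(T0+\lambda)$ (well defined since $T0+\lambda\in SC^1_{1^-}(\mathcal{X})$ and $T$ is onto), convexity and injectivity give $g^\lambda=\lambda g^1$ for all $\lambda\ge 1$, whence $\|g^1|_S=\lambda^{-1}\|g^\lambda|_S<\lambda^{-1}$ for every $\lambda\ge1$, so $\|g^1|_S=0$ and $g^1$ is a positive constant $\alpha$; the formula $T\lambda=T0+\alpha^{-1}\lambda$ then follows for $\lambda\ge0$ and extends to $\lambda<0$ by one more convexity identity, which also settles both directions of the equivalence. Alternatively you could run the same scaling argument on the image side: $\|T0+\mu\beta|_S<1$ for all $\mu\in\mathbb{R}$ forces $\|\beta|_S=\|-\beta|_S=0$ (use that $\mu^{-1}(T0+\mu\beta)\to\beta$ pointwise and that $\|\cdot|_S$ is lower semicontinuous under pointwise limits), i.e.\ $\beta$ is constant. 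Either way, testing an unbounded family of constants against the uniform semi-Lipschitz bound is the indispensable ingredient; without it your proof does not close.
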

	\begin{proof}
	Let $\lambda \in \mathbb{R}$ and $g^\lambda\in SC_{1^-}^1(\mathcal{Y})$ such that $Tg^\lambda=T0+\lambda$. \smallskip
	
Let us first assume that $\lambda\geq 1$. Then by convexity property of the isomorphism $T$ we deduce:
$$T\left({\lambda}^{-1}g^\lambda\right)=T\left({\lambda}^{-1}g^\lambda +{\lambda}^{-1}
(\lambda-1)0\right)={\lambda}^{-1}Tg^\lambda+{\lambda}^{-1}(\lambda-1)T0={\lambda}^{-1}T0+1+{\lambda}^{-1}(\lambda-1)T0=T0+1.$$
It follows that $Tg^1:=T0+1=T({\lambda}^{-1}g^{\lambda})$, therefore, since $T$ is bijective, $\lambda g^1=g^\lambda$ for all $\lambda\geq 1$, so $\|g^1|_S\leq {\lambda}^{-1}$ for all $\lambda\geq 1$. This latter yields that the function $g^1$ is constant, that is,
there exists $\alpha \in \mathbb{R}$ such that $g^1=\alpha$, whence $g^{\lambda}=\alpha \lambda$ for all $ \lambda\geq 1$. Since ${Tg^1=T0+1>T0}$, it follows that $\alpha >0$.\smallskip

Let us now consider the case $\lambda\in [0,1)$. Then $T\left(\lambda g^1\right)=\lambda Tg^1+(1-\lambda)T0=\lambda+T0=T\left(g^\lambda\right),$ therefore, $\lambda g^1=\lambda \alpha =g^\lambda$ for all $\lambda\in [0,1)$. It follows that $g^\lambda=\lambda\alpha$ for any $\lambda\geq 0$. In particular, $T\left(g^{{\alpha}^{-1}{\lambda}}\right)=T0+{\alpha}^{-1}{\lambda}{=T\lambda}$ for any $\lambda\geq 0$. \smallskip
	
Finally, using again convexity of $T$ we get:
$$ T0=T\left(\frac{1}{2}\lambda+\frac{1}{2}\left(-\lambda\right)\right)=\frac{1}{2}T\lambda+\frac{1}{2}T(-\lambda)=\frac{1}{2}\left(T0+{\alpha}^{-1}\lambda\right)+\frac{1}{2}T(-\lambda),$$
which yields $T(-\lambda)=T0-{\alpha}^{-1}\lambda$, for every $\lambda \geq 0$.
	\end{proof}
	\medskip
	Combining Proposition~\ref{ctes} and Corollary~\ref{cor1}, we obtain
	\begin{corollary}\label{cor}
	Let $f\in SC_{1^-}^1(\mathcal{Y})$, $\mathcal{X}'\subset \mathcal{X}$ the dense subset from Corollary~\ref{densos} and $x_0\in \mathcal{X}'$. Denoting by $c={\alpha}^{-1}=T1-T0$ and $\phi=T0$, we have that
	$$Tf(x_0)=c\cdot f(\tau(x_0))+\phi(x_0).$$
	\end{corollary}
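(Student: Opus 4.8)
The plan is to combine the two results just proved, \textbf{Proposition~\ref{ctes}} and \textbf{Corollary~\ref{cor1}}, by a simple ``reduction to constants'' argument. Fix $f\in SC_{1^-}^1(\mathcal{Y})$ and a point $x_0\in\mathcal{X}'$, and set $y_0=\tau(x_0)$. Let $\mu:=f(y_0)\in\mathbb{R}$. The key observation is that the constant function $\mu$ belongs to $SC_{1^-}^1(\mathcal{Y})$ (its semi-Lipschitz constant is $0<1$), so Corollary~\ref{cor1} applies with $g$ replaced by this constant: since $f(\tau(x_0))=\mu=\mu(\tau(x_0))$, we conclude $Tf(x_0)=T\mu(x_0)$.

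Next I would evaluate $T\mu$ using Proposition~\ref{ctes}: there is $\alpha>0$ (the one furnished by that proposition, depending only on $T$ and not on $f$ or $x_0$) with $T\lambda=T0+\alpha^{-1}\lambda$ for every $\lambda\in\mathbb{R}$. Applying this with $\lambda=\mu=f(y_0)=f(\tau(x_0))$ gives
\[
Tf(x_0)=T\mu(x_0)=\bigl(T0+\alpha^{-1}\mu\bigr)(x_0)=\alpha^{-1}f(\tau(x_0))+T0(x_0).
\]
Writing $c=\alpha^{-1}$ and $\phi=T0$, and noting that $c=\alpha^{-1}=T1-T0$ is exactly the identity obtained in Proposition~\ref{ctes} (take $\lambda=1$), this is precisely the claimed formula $Tf(x_0)=c\cdot f(\tau(x_0))+\phi(x_0)$.

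There is essentially no obstacle here: the statement is a formal corollary, and the only thing to check is that the hypotheses of Corollary~\ref{cor1} are met when one of the two functions is the constant $f(\tau(x_0))$ — which is immediate since constants lie in $SC_{1^-}^1(\mathcal{Y})$ — and that the constant $\alpha$ from Proposition~\ref{ctes} is genuinely independent of $f$ and of the chosen point $x_0$, which it is by construction. One should also record that the identity $f(\tau(x_0))=\mu(\tau(x_0))$ needed to invoke Corollary~\ref{cor1} is a tautology, so no continuity or density argument is needed beyond what was already used to produce $\tau$ and $\mathcal{X}'$.
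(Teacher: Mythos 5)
Your proposal is correct and is essentially identical to the paper's own proof: both apply Corollary~\ref{cor1} to $f$ and the constant function with value $f(\tau(x_0))$, and then evaluate $T$ on that constant via Proposition~\ref{ctes}. Nothing further is needed.
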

	\begin{proof}
	Applying Corollary~\ref{cor1} to $f$ and the constant function of value $f(\tau(x_0))$, we get
	$$Tf(x_0)=Tg(x_0)=T0(x_0)+{\alpha}^{-1}f(\tau(x_0))=cf(\tau(x_0))+\phi(x_0).$$
	\end{proof}
\medskip

\subsection{Proof of Theorem~\ref{teo}}\label{ss-3.3}
Recalling the notation of the statement of Theorem~\ref{teo} we set $c:={\alpha}^{-1}=T1-T0$ and $\phi=T0$. Since $\|\phi|_S<1$, in particular $\phi(x_1)-\phi (x_2)<{d_{\mathcal{X}}(x_2,x_1)}$ for all $x_1,x_2\in \mathcal{X}$ such that $x_1\neq x_2$. It is easy to check that we can use $\phi$ to define a quasi-metric on $\mathcal{X}$ as in Proposition~\ref{ai}, obtaining that ${{d_{\mathcal{X}}'(x_1,x_2)=d_{\mathcal{X}}(x_1,x_2)+\phi(x_1)-\phi(x_2)}}$ is a quasi-metric on $\mathcal{X}$ such that $(\mathcal{X},d_{\mathcal{X}})$ is almost isometric to $(\mathcal{X},d_{\mathcal{X}}')$. In order to modify the isomorphism $T$, we define the following mappings:
\begin{itemize}
\item $ R:SC_{1^-}^1(\mathcal{X},d_{\mathcal{X}}) \to SC_{1^-}^1(\mathcal{X},d_{\mathcal{X}}')\,\,\text{by }\, R(g)= g-\phi$ ; \smallskip
\item $ S:SC_{1^-}^1(\mathcal{X},d_{\mathcal{X}}') \to SC_{1^-}^1(\mathcal{X},\alpha d_{\mathcal{X}}')\,\,\text{by}\,S(h)= \alpha h$ ; \smallskip and
\item $\hat{T}:SC_{1^-}^1(\mathcal{Y},d_{\mathcal{Y}}) \to SC_{1^-}^1(\mathcal{X},\alpha d_{\mathcal{X}}')\,\,\text{by}\,
\hat{T}(f)= S\circ R \circ T (f).$ \smallskip
\end{itemize}
Thanks to Proposition~\ref{isomorph} the mapping $R$ is well-defined: indeed, the same arguments used in Proposition~\ref{isomorph} are valid for the quasi-metric $d_{\mathcal{X}'}$ (which comes from a Finsler structure, thanks to Proposition~\ref{tausmooth}). We shall prove that both $\hat{T}$ and $\hat{T}^{-1}$ act as composition operators whenever their images are evaluated on the dense sets $\mathcal{X}'$ and $\mathcal{Y}'$ of Corollary~\ref{densos} respectively. Indeed, given $f\in SC_{1^-}^1(\mathcal{Y})$ and $x_0\in \mathcal{X}'$, we have:
		$$\hat{T}f(x_0)=S\circ R \circ T (f) (x_0)=\alpha \left({\alpha}^{-1} f(\tau(x_0))+\phi(x_0)-\phi(x_0)\right) =f(\tau(x_0)).\smallskip$$
On the other hand, for $g\in SC_{1^-}^1(\mathcal{X},\alpha d_{\mathcal{X}}')$ and $y_0\in \mathcal{Y}'$, we have $$\hat{T}^{-1} g(y_0)=T^{-1}\circ R^{-1} \circ S^{-1}(g)(y_0)=T^{-1}\left({\alpha}^{-1}g+\phi \right)(y_0).$$
Since ${\alpha}^{-1}g+\phi\in SC_{1^-}^1(\mathcal{X},d_{\mathcal{X}})$, there exists $f\in SC_{1^-}^1(\mathcal{Y},d_{\mathcal{Y}})$ such that $Tf={\alpha}^{-1}g+\phi.$ Then, denoting $x_0=\tau^{-1}(y_0)$ we obtain that ${\alpha}^{-1}g(x_0)+\phi(x_0)=Tf(x_0)={\alpha}^{-1}f(y_0)+\phi(x_0),$  whence $f(y_0)=g(x_0)$. Finally
		$$\hat{T}^{-1} g(y_0)=T^{-1}(Tf)(y_0)=g(x_0)=g(\tau^{-1}(y_0)).$$
		
Let us now prove that $\tau:(\mathcal{X}',\alpha d_{\mathcal{X}}')\to (\mathcal{Y}',d_{\mathcal{Y}})$ is an isometry. To this end, let $x_1,x_2 \in \mathcal{X}'$, $y_1=\tau(x_1)$ and $y_2=\tau(x_2)$. Take $\lambda\in (0,1)$ and $\varepsilon>0$ such that $\lambda+\varepsilon < 1$, and consider the function $f_\lambda(\cdot)=\lambda {d_{\mathcal{Y}}(y_1,\cdot)}$. Note that  $\|f_\lambda|_S=\lambda<1$, so we can apply Corollary~\ref{smoothslip} (smooth approximation of semi-Lipschitz functions), obtaining $g\in C^1(\mathcal{Y})$ such that $|g(y)-f_\lambda(y)|<\varepsilon$ for all $y\in \mathcal{Y}$ and $\|g|_S\leq \lambda+\varepsilon< 1$.
The second condition guarantees that $g\in SC_{1^-}^1(\mathcal{Y},d_{\mathcal{Y}})$. From the first condition it follows that {$|g(y_1)|<\varepsilon$} and ${g(y_2)}>\lambda d_{\mathcal{Y}}(y_1,y_2) - \varepsilon$. We deduce:
$$\alpha d_{\mathcal{X}}'(x_1,x_2) \geq {\hat{T}g(x_2)-\hat{T}g(x_1) = g(y_2)-g(y_1)} \geq \lambda d_{\mathcal{Y}}(y_1,y_2)-2\varepsilon$$
for any $\varepsilon>0$ such that $\varepsilon+\lambda < 1$. Consequently, $	\alpha d_{\mathcal{X}}'(x_1,x_2)\geq \lambda d_{\mathcal{Y}}(y_1,y_2)$, for any $\lambda\in (0,1)$. Therefore $$\alpha d_{\mathcal{X}}'(x_1,x_2)\geq d_{\mathcal{Y}}(y_1,y_2).$$
A similar argument holds for the reverse inequality. Take $\lambda\in (0,1)$, $\varepsilon>0$ such that $\lambda+\varepsilon< 1$, and consider $f_\lambda(\cdot)=\lambda {d_{\mathcal{X}}(x_1,\cdot)}$. Applying again Corollary~\ref{smoothslip} we get $g\in C^1(\mathcal{X})$ such that $|g(x)-f_\lambda(x)|<\varepsilon$ for all $x\in \mathcal{X}$ and $\|g|_S\leq \lambda+\varepsilon< 1$.
	Consider $\tilde{g}=\alpha(g-\phi)+\alpha \lambda \phi {(x_1)}\in C^1(\mathcal{X})$. Moreover, $\tilde{g}\in SC_{1^-}^1(\mathcal{X},\alpha d_{\mathcal{X}}')$, since $\tilde{g}=S\circ R (g) +\alpha \lambda \phi {(x_1)}.$
Let us now note that
$$ |\tilde{g}{(x_2)}-\lambda \alpha d_{\mathcal{X}}'(x_1,x_2)| =|\alpha (g{(x_2)}-\lambda d_{\mathcal{X}}(x_1,x_2)) -\alpha \phi {(x_2)}-\lambda \alpha\phi{(x_2)}| \leq \alpha \varepsilon + \alpha (1-\lambda)|\phi{(x_2)}|,$$
which together with $ |\tilde{g}{(x_1)}|=|\alpha g{(x_1)}-\alpha \phi {(x_1)}+\alpha \lambda \phi{(x_1)}|\leq \alpha \varepsilon +\alpha (1-\lambda)|\phi{(x_1)}|,$ yields
		\begin{align*}
		    d_{\mathcal{Y}}(y_1,y_2)&\geq \hat{T}^{-1}\tilde{g}{(y_2)}-\hat{T}^{-1}\tilde{g}{(y_1)}=\tilde{g}{(x_2)}-\tilde{g}{(x_1)}\\
		    &\geq \lambda \alpha d_{\mathcal{X}}'(x_1,x_2)-2\alpha \varepsilon -\alpha (1-\lambda)(|\phi(x_1)|+|\phi(x_2)|),
		\end{align*}
		for any $\varepsilon>0$ such that $\varepsilon+\lambda < 1$.
	Hence, $$d_{\mathcal{Y}}(y_1,y_2)\geq \alpha d_{\mathcal{X}}'(x_1,x_2)-\alpha (1-\lambda)(|\phi(x_1)|+|\phi(x_2)|),$$
	for any $\lambda\in (0,1)$, and therefore $d_{\mathcal{Y}}(y_1,y_2)\geq \alpha d_{\mathcal{X}}'(x_1,x_2)$. \smallskip

	We conclude that $\tau:(\mathcal{X}',\alpha d_{\mathcal{X}}')\to (\mathcal{Y}',d_{\mathcal{Y}})$ is an isometry. It is easy to check that $(\mathcal{X},\alpha d_{\mathcal{X}}')$ is also bicomplete, as $(d_{\mathcal{X}}')^s\leq 2d_{\mathcal{X}}^s$. Then, the isometry $\tau$ between the symmetrizations of $(\mathcal{X}',\alpha d_{\mathcal{X}}')$ and $(\mathcal{Y}',d_{\mathcal{Y}})$ extends to an isometry between $(\mathcal{X},\alpha d_{\mathcal{X}}')$ and $(\mathcal{Y},d_{\mathcal{Y}})$. By continuity, we obtain that for any $f\in SC_{1^-}^1(\mathcal{Y})$ and $x\in \mathcal{X}$,	
	$$Tf(x)=c\cdot f(\tau(x)) +\phi(x).$$
	
	Moreover, since $\tau$ is an almost isometry between the Finsler manifolds $(\mathcal{X},d_{\mathcal{X}})$ and $(\mathcal{Y},{\alpha}^{-1}{d_{\mathcal{Y}}})$, both $\tau$ and $\phi$ are smooth, thanks to Proposition~\ref{tausmooth}.  \hfill\qed
	\bigskip
	\subsection{Functional characterization of isometries and almost isometries}
Let us recall from \cite{CJ} the following definition:
\begin{definition}[almost unital isomorphism]\label{almostunital}
An isomorphism of convex partially ordered sets $${T:SC_{1^-}^1(\mathcal{Y})\to SC_{1^-}^1(\mathcal{X})}$$ is called \textit{almost unital} if ${T1-T0=1}$.
\end{definition}

Applying the results of the previous section we obtain:

\begin{corollary}[Characterization of {strict} Finsler almost isometries]\label{strictalmostiso}
Let $(\mathcal{X},d_{\mathcal{X}})$ and $(\mathcal{Y},d_{\mathcal{Y}})$ be connected, second countable Finsler manifolds, which are bicomplete (with their respective Finsler distances). Then, there is a {strict almost isometry} between $(\mathcal{X},d_{\mathcal{X}})$ and $(\mathcal{Y},d_{\mathcal{Y}})$ if and only if there exists an almost unital isomorphism $${T:SC_{1^-}^1(\mathcal{Y})\to SC_{1^-}^1(\mathcal{X})}.$$
In particular, for any such isomorphism, there exist a diffeomorphism $\tau:\mathcal{X}\to \mathcal{Y}$ and a smooth function ${\phi\in SC_{1^-}^1(\mathcal{X})}$ such that $Tf= f\circ \tau+\phi$ for all $f\in SC_{1^-}^1(\mathcal{Y})$.
	\end{corollary}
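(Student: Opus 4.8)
\emph{Plan.} The plan is to obtain the corollary as a specialization of Theorem~\ref{teo}, using the almost-unital hypothesis to pin down the multiplicative constant and Proposition~\ref{sai} to upgrade ``almost isometry'' to ``strict almost isometry''. For the ``only if'' direction I would start from a strict almost isometry $\tau:\mathcal{X}\to\mathcal{Y}$ and let $\phi:\mathcal{X}\to\mathbb{R}$ be the function associated with it by Proposition~\ref{ai}; Proposition~\ref{isomorph} then shows that $Tf=f\circ\tau+\phi$ is an isomorphism of convex partially ordered sets, and since $T1=1+\phi$ and $T0=\phi$ we get $T1-T0=1$, i.e. $T$ is almost unital. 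This same $T$ is plainly of the form claimed in the final sentence.

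For the converse, let $T:SC_{1^-}^1(\mathcal{Y})\to SC_{1^-}^1(\mathcal{X})$ be an almost unital isomorphism and apply Theorem~\ref{teo}: this furnishes $\alpha>0$, a quasi-metric $d'_{\mathcal{X}}$ on $\mathcal{X}$ and a bijection $\tau:\mathcal{X}\to\mathcal{Y}$ with properties (i)--(iv), in particular $Tf=c\,(f\circ\tau)+\phi$ with $c=\alpha^{-1}$ and $\phi=T0$. Evaluating this representation at the constant functions $1$ and $0$ forces $c=T1-T0=1$, hence $\alpha=1$. Then property (ii) says $\tau:(\mathcal{X},d'_{\mathcal{X}})\to(\mathcal{Y},d_{\mathcal{Y}})$ is an isometry and property (i) gives $d'_{\mathcal{X}}(x_1,x_2)=d_{\mathcal{X}}(x_1,x_2)+\phi(x_1)-\phi(x_2)$, so that
$$d_{\mathcal{Y}}(\tau(x_1),\tau(x_2))=d_{\mathcal{X}}(x_1,x_2)+\phi(x_1)-\phi(x_2)\qquad\text{for all }x_1,x_2\in\mathcal{X}.$$
By Proposition~\ref{ai}, $\tau$ is an almost isometry with associated function $\phi=T0\in SC_{1^-}^1(\mathcal{X})$, and $Tf=f\circ\tau+\phi$; by Theorem~\ref{teo}(iii) and Proposition~\ref{tausmooth}, $\tau$ is a diffeomorphism and $\phi$ is smooth, which settles the ``in particular'' part.

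It remains to show $\tau$ is strict, and this is the only step requiring real care. By Proposition~\ref{sai} it suffices that $\|\phi|_S<1$ and $\|\psi|_S<1$, where $\psi$ is the function associated with $\tau^{-1}$. The first holds because $\phi=T0\in SC_{1^-}^1(\mathcal{X})$. For the second, the representation $Tf=f\circ\tau+\phi$ gives $T^{-1}g=g\circ\tau^{-1}-\phi\circ\tau^{-1}$, and substituting $x_i=\tau^{-1}(y_i)$ in the displayed identity shows that $\psi$ equals, up to an additive constant, $-\phi\circ\tau^{-1}=T^{-1}0$, which lies in $SC_{1^-}^1(\mathcal{Y})$; hence $\|\psi|_S<1$. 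Thus Theorem~\ref{teo} supplies the almost isometry and the constant $\alpha=1$ for free, and the only substantive addition is the control of the ``reverse'' potential $\psi$ via the observation that $T^{-1}$, like $T$, lands inside the class of smooth semi-Lipschitz functions of constant strictly less than $1$.
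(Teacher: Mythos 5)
Your proposal is correct and follows essentially the same route as the paper, which simply cites Proposition~\ref{isomorph} for the ``only if'' direction and Theorem~\ref{teo} together with the almost-unital condition for the ``if'' direction. The only difference is that you spell out the strictness verification (via Proposition~\ref{sai}, using $\phi=T0$ and $\psi=-\phi\circ\tau^{-1}=T^{-1}0$, both of semi-Lipschitz constant less than $1$), a step the paper's one-line proof leaves implicit; your treatment of it is accurate.
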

\begin{proof}
	The ``if" implication follows directly from Theorem~\ref{teo} and Definition~\ref{almostunital}, and the ``only if" part is Proposition~\ref{isomorph}.
\end{proof}
	\medskip
{Using Proposition \ref{compact}, we obtain the following characterization of almost isometries between compact Finsler manifolds:
\begin{corollary}[Characterization of almost isometries between compact Finsler manifolds]\label{compactalmost}
	Let $(\mathcal{X},d_{\mathcal{X}})$ and $(\mathcal{Y},d_{\mathcal{Y}})$ be compact, connected, second countable Finsler manifolds, which are bicomplete (with their respective Finsler distances). Then  $(\mathcal{X},d_{\mathcal{X}})$ and $(\mathcal{Y},d_{\mathcal{Y}})$ are almost isometric if and only if there exists an almost unital isomorphism $${T:SC_{1^-}^1(\mathcal{Y})\to SC_{1^-}^1(\mathcal{X})}.$$
	In particular, for any such isomorphism, there exist a diffeomorphism $\tau:\mathcal{X}\to \mathcal{Y}$ and a smooth function ${\phi\in SC_{1^-}^1(\mathcal{X})}$ such that $Tf= f\circ \tau+\phi$ for all $f\in SC_{1^-}^1(\mathcal{Y})$.
\end{corollary}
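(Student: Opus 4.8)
The plan is to reduce the statement directly to Corollary~\ref{strictalmostiso}, the only extra ingredient being that in the compact setting ``almost isometric'' and ``strictly almost isometric'' mean the same thing. So I would first record that equivalence: a strict almost isometry is in particular an almost isometry, and conversely, by Proposition~\ref{compact}, \emph{every} almost isometry between connected compact Finsler manifolds is strict. Hence, for compact $\mathcal{X}$ and $\mathcal{Y}$, there exists an almost isometry between them if and only if there exists a strict almost isometry between them.

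For the forward implication I would then argue: if $\mathcal{X}$ and $\mathcal{Y}$ are almost isometric, pick an almost isometry $\tau:\mathcal{X}\to\mathcal{Y}$; by Proposition~\ref{compact} it is strict, with associated smooth function $\phi$ (in the sense of Proposition~\ref{ai}) satisfying $\|\phi|_S<1$ by Proposition~\ref{sai}. By Proposition~\ref{isomorph} the map $Tf=f\circ\tau+\phi$ is an isomorphism of convex partially ordered sets from $SC_{1^-}^1(\mathcal{Y})$ to $SC_{1^-}^1(\mathcal{X})$, and since $T0=\phi$ and $T1=1+\phi$ we get $T1-T0=1$, i.e. $T$ is almost unital. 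Conversely, if an almost unital isomorphism $T:SC_{1^-}^1(\mathcal{Y})\to SC_{1^-}^1(\mathcal{X})$ exists, Corollary~\ref{strictalmostiso} (whose hypotheses — connected, second countable, bicomplete — are all in force here) yields a strict almost isometry between $\mathcal{X}$ and $\mathcal{Y}$, hence an almost isometry; the same corollary's ``in particular'' clause provides the diffeomorphism $\tau:\mathcal{X}\to\mathcal{Y}$ and the smooth $\phi\in SC_{1^-}^1(\mathcal{X})$ with $Tf=f\circ\tau+\phi$, which is exactly the additional conclusion claimed.

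I do not expect a genuine obstacle: essentially all the work has already been done in Theorem~\ref{teo}/Corollary~\ref{strictalmostiso} and in Proposition~\ref{compact}. The only point deserving a sentence of care is making explicit that compactness is used \emph{solely} through Proposition~\ref{compact}, to upgrade an arbitrary almost isometry to a strict one, after which the compact hypothesis plays no further role and the proof is a one-line invocation of Corollary~\ref{strictalmostiso}.
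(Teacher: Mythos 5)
Your proof is correct and follows exactly the route the paper intends: the corollary is stated immediately after the remark ``Using Proposition~\ref{compact}, we obtain\dots'', i.e.\ the whole content is precisely the upgrade from almost isometry to strict almost isometry via Proposition~\ref{compact}, followed by an application of Corollary~\ref{strictalmostiso}. Nothing is missing.
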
	
}

	If we focus on isometries, we obtain:
	\begin{corollary}[Characterization of Finsler isometries]\label{isomGeneral}
Let $(\mathcal{X},d_{\mathcal{X}})$ and $(\mathcal{Y},d_{\mathcal{Y}})$ be connected, second countable Finsler manifolds which are  bicomplete (with their respective Finsler distances). Then, $(\mathcal{X},d_{\mathcal{X}})$ and $(\mathcal{Y},d_{\mathcal{Y}})$ are \emph{isometric} if and only if there exists an isomorphism ${T:SC_{1^-}^1(\mathcal{Y})\to SC_{1^-}^1(\mathcal{X})}$ such that
$${\|Tf|_S=\|f|_S}\,,\quad \text{for all } \,\,f\in SC_{1^-}^1(\mathcal{Y}).$$
Moreover, for any such isomorphism, there exist a diffeomorphism $\tau:\mathcal{X}\to \mathcal{Y}$ and ${\beta \in \mathbb{R}}$ such that ${Tf= f\circ \tau+\beta}$ for all $f\in SC_{1^-}^1(\mathcal{Y})$.
\end{corollary}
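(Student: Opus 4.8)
The plan is to obtain this corollary as a refinement of the Main Theorem: Theorem~\ref{teo} already produces, for an arbitrary isomorphism of convex partially ordered sets, the representation $Tf=c\,(f\circ\tau)+\phi$ with $c=T1-T0$ and $\phi=T0$, and the extra hypothesis $\|Tf|_S=\|f|_S$ should be exactly what is needed to collapse the multiplicative constant $c$ to $1$ and the function $\phi$ to a constant.

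\emph{The ``only if'' direction} is the elementary one. If $\sigma:(\mathcal X,d_{\mathcal X})\to(\mathcal Y,d_{\mathcal Y})$ is an isometry, then $\sigma$ is in particular a strict almost isometry, hence smooth (indeed a diffeomorphism) by Proposition~\ref{tausmooth}, and the function associated to it via Proposition~\ref{ai} is $\phi\equiv0$, since $d_{\mathcal Y}(\sigma x,\sigma x_0)=d_{\mathcal X}(x,x_0)$. So by Proposition~\ref{isomorph} the map $Tf:=f\circ\sigma$ is an isomorphism of convex partially ordered sets, and
$$\|Tf|_S=\sup_{x\ne x'}\frac{f(\sigma x')-f(\sigma x)}{d_{\mathcal X}(x,x')}=\sup_{y\ne y'}\frac{f(y')-f(y)}{d_{\mathcal Y}(y,y')}=\|f|_S,$$
because $\sigma$ is distance preserving and bijective. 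Post-composing $T$ with the addition of a constant, which changes neither the order, the convex structure, nor the semi-Lipschitz constant, accounts for the $\beta$ in the statement.

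\emph{The ``if'' direction.} Let $T$ be an isomorphism of convex partially ordered sets with $\|Tf|_S=\|f|_S$ for all $f\in SC_{1^-}^1(\mathcal Y)$. First I would invoke Theorem~\ref{teo} to obtain $\alpha>0$, a diffeomorphism $\tau:\mathcal X\to\mathcal Y$ and the quasi-metric $d'_{\mathcal X}(x,x')=d_{\mathcal X}(x,x')+\phi(x)-\phi(x')$, with $\phi=T0$ and $c:=\alpha^{-1}=T1-T0$, such that $\tau:(\mathcal X,\alpha d'_{\mathcal X})\to(\mathcal Y,d_{\mathcal Y})$ is an isometry and $Tf=c\,(f\circ\tau)+\phi$ for every $f$. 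Evaluating the norm hypothesis at $f=0$ gives $\|\phi|_S=\|T0|_S=\|0|_S=0$; since $\mathcal X$ is connected, $d_{\mathcal X}(x,x')>0$ whenever $x\ne x'$, so $\phi(x')\le\phi(x)$ for every ordered pair, which forces $\phi$ to be a constant $\beta$. Consequently $d'_{\mathcal X}=d_{\mathcal X}$, so $\tau:(\mathcal X,\alpha d_{\mathcal X})\to(\mathcal Y,d_{\mathcal Y})$ is an isometry and $Tf=c\,(f\circ\tau)+\beta$. It then remains to see that $c=1$; granting this, $\tau$ is an isometry $(\mathcal X,d_{\mathcal X})\to(\mathcal Y,d_{\mathcal Y})$ (and a diffeomorphism, being an almost isometry between the two Finsler manifolds, by Proposition~\ref{tausmooth}), we get $Tf=f\circ\tau+\beta$, and in particular $\mathcal X$ and $\mathcal Y$ are isometric.

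\emph{The main obstacle} is precisely the normalization $c=1$: Theorem~\ref{teo} only delivers $\tau$ as an isometry after an a priori unknown rescaling of $d_{\mathcal X}$ by $\alpha$, so one has to see that $\|Tf|_S=\|f|_S$ excludes $\alpha\ne1$. The route I would take is to test the identity on functions whose forward semi-Lipschitz constant is as close to $1$ as desired: fix $y_0\in\mathcal Y$ and $\lambda\uparrow1$, take $C^1$-smooth semi-Lipschitz approximations $f_\lambda$ of $\lambda\,d_{\mathcal Y}(y_0,\cdot)$ furnished by Corollary~\ref{smoothslip} (with $\|f_\lambda|_S$ arbitrarily close to $\lambda$), compute $\|Tf_\lambda|_S$ from $Tf_\lambda=c\,(f_\lambda\circ\tau)+\beta$ together with the rescaled isometry $\tau$, and match it against $\|f_\lambda|_S$; passing to the limit should pin $c$ down. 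This is the only place where the norm-preservation hypothesis does work that the mere existence of an isomorphism does not already do — the latter yielding, via Theorem~\ref{teo}, almost-isometry only up to a scale factor — so I expect it to be where the care is needed.
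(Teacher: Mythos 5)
Your proposal follows the same route as the paper's proof of this corollary: the forward direction via the composition operator $Tf=f\circ\tau$, and the converse by invoking Theorem~\ref{teo}, using $\|T0|_S=\|0|_S=0$ together with $d_{\mathcal{X}}(x,x')>0$ for $x\neq x'$ to conclude that $\phi=T0$ is a constant $\beta$ (hence $d'_{\mathcal{X}}=d_{\mathcal{X}}$), and then arguing that the scale factor satisfies $\alpha=1$. Up to that last point your argument is correct and coincides with the paper's.

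The gap lies in the step you yourself single out as the main obstacle. Your plan is to test the hypothesis $\|Tf|_S=\|f|_S$ on smooth approximations $f_\lambda$ of $\lambda\, d_{\mathcal{Y}}(y_0,\cdot)$ and let $\lambda\uparrow 1$, but this test is vacuous: once $\phi$ is the constant $\beta$, Theorem~\ref{teo} gives $Tf=c\,(f\circ\tau)+\beta$ with $c=\alpha^{-1}$ and $d_{\mathcal{Y}}(\tau(x),\tau(x'))=\alpha\, d_{\mathcal{X}}(x,x')$, whence for every $f$ and every $x\neq x'$
$$\frac{Tf(x')-Tf(x)}{d_{\mathcal{X}}(x,x')}=\frac{c\bigl(f(\tau(x'))-f(\tau(x))\bigr)}{\alpha^{-1}\,d_{\mathcal{Y}}(\tau(x),\tau(x'))}=\frac{f(\tau(x'))-f(\tau(x))}{d_{\mathcal{Y}}(\tau(x),\tau(x'))},$$
so $\|Tf|_S=\|f|_S$ holds identically for \emph{every} value of $\alpha$; no choice of test functions, extremal or not, can extract $\alpha=1$ from norm preservation, and passing to the limit $\lambda\uparrow1$ only returns a tautology. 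Concretely, $Tf=\tfrac12\, f\circ\tau$, with $\tau$ the dilation by $2$ from a circle of circumference $1$ onto one of circumference $2$, is an isomorphism of convex partially ordered sets that preserves all semi-Lipschitz constants, yet has $c=\tfrac12$ and cannot be written as $f\circ\sigma+\beta$. The paper's own proof disposes of this point with the one-line assertion that ``$\alpha$ must be $1$ for $T$ to preserve semi-Lipschitz constants,'' so you have correctly located the crux; but the strategy you propose does not close it, and the computation above shows that the normalization of $c$ must come from somewhere else (compare the almost-unitality condition $T1-T0=1$ used in Corollary~\ref{strictalmostiso}), not from the norm-preservation hypothesis alone.
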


\begin{proof}
	If $\tau$ is an isometry between $(\mathcal{X},d_{\mathcal{X}})$ and $(\mathcal{Y},d_{\mathcal{Y}})$, then $f\mapsto T(f):=f\circ \tau$ is an isomorphism between convex, partially ordered structures that satisfies ${\|Tf|_S=\|f|_S}$ for all $f\in SC_{1^-}^1(\mathcal{Y})$. \smallskip

Conversely, we can apply Theorem~\ref{teo} to the isomorphism $T$, and since $\|T0|_S=0$, the function $T0$ is a constant, so the quasi-metric $d_{\mathcal{X}}'$ induced by $T0$ is the same as $d_{\mathcal{X}}$. In addition, $\alpha$ must be $1$ for $T$ to preserve semi-Lipschitz constants, and therefore $\tau:(X,d_{\mathcal{X}})\to (Y,d_{\mathcal{Y}})$ is an isometry.
\end{proof}
\medskip
In the particular case of reversible Finsler manifolds, Theorem~\ref{teo} can be restated as follows.
\begin{corollary}\label{beta}
Let $(\mathcal{X},d_{\mathcal{X}})$ and $(\mathcal{Y},d_{\mathcal{Y}})$ be connected, second countable, reversible complete Finsler manifolds and ${T:C_{1^-}^1(\mathcal{Y})\to C_{1^-}^1(\mathcal{X})}$ be an isomorphism of convex partially ordered sets. Then, there exist $\alpha >0$, $\beta \in \mathbb{R}$ and a bijection $\tau:\mathcal{X}\to \mathcal{Y}$ such that:
		\begin{itemize}
   	\item[(i)] $(\mathcal{Y},d_{\mathcal{Y}})$ and $(\mathcal{X},\alpha d_{\mathcal{X}})$ are isometrically diffeomorphic via $\tau$.\smallskip
	\item[(ii)] For every $f\in C_{1^-}^1 (Y)$ we have $Tf=c\cdot (f\circ \tau) +\beta$, where $c={\alpha}^{-1}$ and $\beta=T0$.
\end{itemize}
		\end{corollary}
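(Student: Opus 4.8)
The plan is to obtain the statement as an immediate specialization of Theorem~\ref{teo}, the only additional observation being that in the reversible setting the function $\phi = T0$ produced by that theorem is forced to be constant. First I would note that a reversible Finsler manifold is a genuine metric space for its Finsler distance: since each $F(x,\cdot)$ is symmetric, the Finsler length of a path and of its reverse coincide, so $d_{\mathcal{X}}$ and $d_{\mathcal{Y}}$ are symmetric; in particular, ``complete'' here means the same as ``bicomplete'', and by the notational convention $C_{1^-}^1(\mathcal{X}) = SC_{1^-}^1(\mathcal{X})$ and $C_{1^-}^1(\mathcal{Y}) = SC_{1^-}^1(\mathcal{Y})$. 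Thus $T$ is an isomorphism of convex partially ordered sets between $SC_{1^-}^1(\mathcal{Y})$ and $SC_{1^-}^1(\mathcal{X})$ for connected, second countable, bicomplete Finsler manifolds, so Theorem~\ref{teo} applies.

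Theorem~\ref{teo} then yields $\alpha > 0$, a bijection $\tau:\mathcal{X}\to\mathcal{Y}$ which is moreover a diffeomorphism, and $\phi = T0$ with $c = \alpha^{-1} = T1 - T0$, such that the quasi-metric $d'_{\mathcal{X}}(x_1,x_2) = d_{\mathcal{X}}(x_1,x_2) + \phi(x_1) - \phi(x_2)$ makes $\tau:(\mathcal{X},\alpha\,d'_{\mathcal{X}})\to(\mathcal{Y},d_{\mathcal{Y}})$ an isometry and $Tf = c\,(f\circ\tau) + \phi$ for every $f\in SC_{1^-}^1(\mathcal{Y})$. The key step is now that, $(\mathcal{Y},d_{\mathcal{Y}})$ being a metric space, $\alpha\,d'_{\mathcal{X}}$ must itself be symmetric; comparing $d'_{\mathcal{X}}(x_1,x_2)$ with $d'_{\mathcal{X}}(x_2,x_1)$ and using the symmetry of $d_{\mathcal{X}}$ gives $\phi(x_1) - \phi(x_2) = \phi(x_2) - \phi(x_1)$ for all $x_1,x_2\in\mathcal{X}$, whence $\phi \equiv \beta$ for some $\beta\in\mathbb{R}$. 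Therefore $d'_{\mathcal{X}} = d_{\mathcal{X}}$, so $\tau$ is an isometry between $(\mathcal{X},\alpha\,d_{\mathcal{X}})$ and $(\mathcal{Y},d_{\mathcal{Y}})$ which is simultaneously a diffeomorphism, i.e.\ an isometric diffeomorphism, proving (i); substituting $\phi \equiv \beta$ into the formula for $T$ gives $Tf = c\,(f\circ\tau) + \beta$ with $c = \alpha^{-1}$ and $\beta = T0$, proving (ii).

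I do not expect a real obstacle here, as this is a routine specialization of Theorem~\ref{teo}; the only point requiring (minimal) care is the chain of implications ``$d_{\mathcal{Y}}$ symmetric and $\tau$ an isometry'' $\Rightarrow$ ``$d'_{\mathcal{X}}$ symmetric'' $\Rightarrow$ ``$\phi$ constant'', together with checking that the hypotheses of Theorem~\ref{teo} (connectedness, second countability, bicompleteness) are indeed inherited in the reversible case, which they are.
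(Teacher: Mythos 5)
Your proposal is correct and follows exactly the paper's own (very terse) argument: apply Theorem~\ref{teo} and observe that, since all the distances involved are symmetric in the reversible case, the isometry condition forces $d'_{\mathcal{X}}$ to be symmetric and hence $\phi$ to be constant. The extra details you supply (the chain $d_{\mathcal{Y}}$ symmetric $\Rightarrow$ $d'_{\mathcal{X}}$ symmetric $\Rightarrow$ $\phi(x_1)-\phi(x_2)=\phi(x_2)-\phi(x_1)$) are exactly the intended justification of the paper's parenthetical remark.
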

		\begin{proof}
		It follows from Theorem~\ref{teo}. (Since all involved distances are symmetric, $\phi$ must be constant.)
		\end{proof}
\smallskip
Therefore we obtain the following characterization of isometries for reversible Finsler Manifolds.	
\begin{corollary}[Characterization of isometries for reversible Finsler manifolds]\label{isomRev}
		Let $(\mathcal{X},d_{\mathcal{X}})$ and $(\mathcal{Y},d_{\mathcal{Y}})$ connected, second countable, reversible complete Finsler manifolds. Then the manifolds $(\mathcal{X},d_{\mathcal{X}})$, $(\mathcal{Y},d_{\mathcal{Y}})$ are isometric if and only if there exists an almost unital isomorphism ${T:C_{1^-}^1(\mathcal{Y})\to C_{1^-}^1(\mathcal{X})}$. Moreover, for any such isomorphism there exist a diffeomorphism ${\tau:\mathcal{X}\to \mathcal{Y}}$ and $\beta \in \mathbb{R}$ such that $$Tf= f\circ \tau +\beta\quad \text{ for all } \, f\in C_{1^-}^1(\mathcal{Y}).$$	
		\end{corollary}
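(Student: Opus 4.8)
The plan is to deduce this corollary directly from Corollary~\ref{beta}, treating the ``almost unital'' hypothesis purely as a normalization that kills the one free multiplicative constant. I do not expect any serious analytic difficulty at this stage, since the substantive work has already been carried out in Theorem~\ref{teo} and packaged into Corollary~\ref{beta}.

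\emph{Sufficiency of an isometry.} First I would check that a metric isometry $\tau:(\mathcal{X},d_{\mathcal{X}})\to(\mathcal{Y},d_{\mathcal{Y}})$ gives rise to an almost unital isomorphism. An isometry is in particular a strict almost isometry, so Proposition~\ref{isomorph} applies and yields that $Tf:=f\circ\tau+\phi$ is an isomorphism of convex partially ordered sets; and since in the isometric (in particular reversible) case the function associated to $\tau$ in the sense of Proposition~\ref{ai} is $\phi\equiv 0$, this reduces to $Tf=f\circ\tau$. (Alternatively one verifies this by hand: by the Deng--Hou characterization $\tau$ is a Finsler isometry, hence a diffeomorphism, so $f\circ\tau\in C^1(\mathcal{X})$, and being distance-preserving it satisfies $\|f\circ\tau|_S=\|f|_S$, whence $T$ is a well-defined affine order-preserving bijection with inverse $g\mapsto g\circ\tau^{-1}$.) Since $T1=1$ and $T0=0$ as constant functions, $T1-T0=1$, so $T$ is almost unital.

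\emph{Necessity: from an almost unital isomorphism to an isometry.} Conversely, given an almost unital isomorphism $T:C_{1^-}^1(\mathcal{Y})\to C_{1^-}^1(\mathcal{X})$, I would invoke Corollary~\ref{beta} to obtain $\alpha>0$, a constant $\beta=T0\in\mathbb{R}$ and a bijection $\tau:\mathcal{X}\to\mathcal{Y}$ which is an isometric diffeomorphism from $(\mathcal{X},\alpha d_{\mathcal{X}})$ onto $(\mathcal{Y},d_{\mathcal{Y}})$ with $Tf=\alpha^{-1}(f\circ\tau)+\beta$ for every $f$. Evaluating on the constants $0$ and $1$ gives $T0=\beta$ and $T1=\alpha^{-1}+\beta$, hence
$$1=T1-T0=\alpha^{-1},$$
so $\alpha=1$. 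Therefore $\tau$ is an isometric diffeomorphism between $(\mathcal{X},d_{\mathcal{X}})$ and $(\mathcal{Y},d_{\mathcal{Y}})$, and $Tf=f\circ\tau+\beta$, which gives both the equivalence and the ``moreover'' part.

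\emph{Where the only subtlety lies.} The hard part is not in this corollary at all; the one delicate passage is the one already absorbed into Corollary~\ref{beta}: in the reversible setting the additive perturbation $\phi=T0$ produced by Theorem~\ref{teo} must be constant, because the quasi-metric $d_{\mathcal{X}}'(x,x')=d_{\mathcal{X}}(x,x')+\phi(x)-\phi(x')$ has to come from a \emph{reversible} Finsler structure, which forces $d\phi\equiv 0$ and hence $\phi$ constant on the connected manifold $\mathcal{X}$. Granting that, the present statement is merely the combination of this fact with the trivial identity $\alpha^{-1}=T1-T0=1$, and I anticipate no further obstacle.
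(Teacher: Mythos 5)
Your proposal is correct and follows essentially the same route as the paper, which derives this corollary directly from Corollary~\ref{beta} (itself a consequence of Theorem~\ref{teo} plus the observation that reversibility forces $\phi=T0$ to be constant), with the almost unital condition $T1-T0=\alpha^{-1}=1$ pinning down $\alpha=1$. The forward direction via $Tf=f\circ\tau$ is likewise the intended one, so there is nothing to add.
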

\smallskip
Note that the isomorphism of partially ordered sets in the above Corollary preserves Lipschitz constants, and can be replaced by $\tilde{T}=T-\beta$ in order to extend linearly to the spaces of $C^1$-smooth Lipschitz functions, denoted by $C_{\mathrm{Lip}}^1(\mathcal{Y})$ and $C_{\mathrm{Lip}}^1(\mathcal{X})$ respectively. Therefore, for the particular case of reversible Finsler manifolds we can reformulate Corollary~\ref{isomRev} as follows:
\begin{corollary}
Let $(\mathcal{X},d_{\mathcal{X}})$ and $(\mathcal{Y},d_{\mathcal{Y}})$ connected, second countable, reversible complete Finsler manifolds. Then $(\mathcal{X},d_{\mathcal{X}})$ and $(\mathcal{Y},d_{\mathcal{Y}})$ are isometric if and only if there exists a linear, order and semi-norm preserving bijection $T:(C_{\mathrm{Lip}}^1(\mathcal{Y}),\|\cdot\|_\mathrm{Lip}) \to (C_{\mathrm{Lip}}^1(\mathcal{X}),\|\cdot\|_\mathrm{Lip})$. Moreover, for any such bijection there exist a diffeomorphism ${\tau:\mathcal{X}\to \mathcal{Y}}$ such that $Tf= f\circ \tau$ for all $ f\in C_{\mathrm{Lip}}^1(\mathcal{Y})$.
\end{corollary}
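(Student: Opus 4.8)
The plan is to deduce this corollary from Corollary~\ref{isomGeneral} (equivalently, its reversible specialization Corollary~\ref{isomRev}), using the elementary observation that on a reversible Finsler manifold one has $\|g\|_{\mathrm{Lip}}=\|g|_S$ for every $C^1$ function, so that $C^1_{1^-}(\mathcal{Z})$ is exactly the open unit ``$\|\cdot\|_{\mathrm{Lip}}$-ball'' inside $C^1_{\mathrm{Lip}}(\mathcal{Z})$ for $\mathcal{Z}\in\{\mathcal{X},\mathcal{Y}\}$.

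For the ``only if'' direction I would argue directly: if $(\mathcal{X},d_{\mathcal{X}})$ and $(\mathcal{Y},d_{\mathcal{Y}})$ are isometric, then by the Myers--Steenrod/Deng--Hou theorem the isometry $\tau$ is a diffeomorphism preserving the Finsler structures, so $f\mapsto f\circ\tau$ is a linear, order-preserving bijection $C^1_{\mathrm{Lip}}(\mathcal{Y})\to C^1_{\mathrm{Lip}}(\mathcal{X})$ with inverse $g\mapsto g\circ\tau^{-1}$, and it preserves $\|\cdot\|_{\mathrm{Lip}}$ because $d\tau$ is fibrewise an isometry of the Minkowski norms.

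For the ``if'' direction, let $T$ be a linear, order- and semi-norm-preserving bijection between $(C^1_{\mathrm{Lip}}(\mathcal{Y}),\|\cdot\|_{\mathrm{Lip}})$ and $(C^1_{\mathrm{Lip}}(\mathcal{X}),\|\cdot\|_{\mathrm{Lip}})$. First I would check that $T$ restricts to a bijection $T_0:C^1_{1^-}(\mathcal{Y})\to C^1_{1^-}(\mathcal{X})$: this uses semi-norm preservation essentially, since for $g\in C^1_{1^-}(\mathcal{X})$ the preimage $f=T^{-1}g$ satisfies $\|f\|_{\mathrm{Lip}}=\|g\|_{\mathrm{Lip}}<1$. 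Linearity of $T$ makes $T_0$ preserve convex combinations, the hypothesis makes it an order isomorphism (that is, $T_0f\ge T_0g\iff f\ge g$), and reversibility yields $\|T_0f|_S=\|Tf\|_{\mathrm{Lip}}=\|f\|_{\mathrm{Lip}}=\|f|_S$; hence $T_0$ is an isomorphism of convex partially ordered sets preserving semi-Lipschitz constants. Corollary~\ref{isomGeneral} then gives that $(\mathcal{X},d_{\mathcal{X}})$ and $(\mathcal{Y},d_{\mathcal{Y}})$ are isometric and produces a diffeomorphism $\tau:\mathcal{X}\to\mathcal{Y}$ and a constant $\beta$ with $T_0f=f\circ\tau+\beta$ for all $f\in C^1_{1^-}(\mathcal{Y})$. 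Since $T$ is linear, $T0=0$, so $\beta=0$ and $T_0f=f\circ\tau$ on $C^1_{1^-}(\mathcal{Y})$. To upgrade this to all of $C^1_{\mathrm{Lip}}(\mathcal{Y})$, given $f$ I would pick $\lambda>0$ with $\lambda\|f\|_{\mathrm{Lip}}<1$, note $\lambda f\in C^1_{1^-}(\mathcal{Y})$, and use linearity: $\lambda Tf=T(\lambda f)=(\lambda f)\circ\tau=\lambda(f\circ\tau)$, hence $Tf=f\circ\tau$.

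I do not expect a serious obstacle: all the geometric content — recovering $\tau$ and knowing that it is an isometric diffeomorphism — is already packaged in Corollary~\ref{isomGeneral}. The only slightly delicate points are the bookkeeping ones: that $T$ really restricts to a bijection between the two ``$\|\cdot\|_{\mathrm{Lip}}$-balls'' (where semi-norm preservation is used for surjectivity), and that the additive constant $\beta$ must vanish by linearity of $T$; the final rescaling argument is routine.
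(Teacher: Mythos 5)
Your proof is correct and follows essentially the same route as the paper, which also obtains this statement by passing between the ``unit ball'' $C^1_{1^-}$ and the full cone $C^1_{\mathrm{Lip}}$ via linearity, rescaling and the identity $\|\cdot\|_{\mathrm{Lip}}=\|\cdot|_S$ in the reversible case, and then invoking the previously established characterization corollary (the paper cites Corollary~\ref{isomRev}, you use its source Corollary~\ref{isomGeneral}; these coincide here). Your write-up is merely more explicit about the restriction/extension bookkeeping and the vanishing of $\beta$, which the paper leaves as a remark.
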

		
	
\bigskip
\subsection*{Acknowledgement}
Part of this work has been done during a research stay of A. Daniildis (as Gaspard Monge invited professor) and F. Venegas (as Research trainee) at INRIA (\'Equipe Tropicale) and CMAP of \'Ecole Polytechnique (France).  A preliminary version of this work has been presented by the third author at the conference ``Function Theory on Infinite Dimensional Spaces XV", held at the Complutense University of Madrid (February 2018). This author wishes to thank the organizing committee of the aforementioned event for hospitality.

%
%
%
%
%
%

%
%



\begin{thebibliography}{9}
	\bibitem{BCS}
	D. Bao, S-S Chern, and Z. Shen.
	\newblock {\em An introduction to Riemann-Finsler geometry}, volume 200 (2012).
	\newblock Springer Science \& Business Media.
	
		\bibitem{CC}
	F. Cabello-S{\'a}nchez and J. Cabello-S{\'a}nchez.
	\newblock \emph{Some preserver problems on algebras of smooth functions.}
	\newblock Ark. Mat. \textbf{48(2)} (2010), 289--300.
	
	
	\bibitem{CC11}
	F. Cabello-S{\'a}nchez and J. Cabello-S{\'a}nchez.
	\newblock \emph{Nonlinear isomorphisms of lattices of {L}ipschitz functions.}
	\newblock Houston J. Math \textbf{37(1)} (2011), 181--202.
	
	
	\bibitem{CJ} J. Cabello-S\'anchez and J. A. Jaramillo,
	\emph{A functional representation of almost isometries},
	J. Math. Anal. Appl. \textbf{445} (2017), 1243--1257.
	
	\bibitem{C}
	S. Cobzas.
	\newblock {\em Functional analysis in asymmetric normed spaces} (2012).
	\newblock Springer Science \& Business Media.
	
	\bibitem{DH} S. Deng and Z. Hou, Zixin
	\emph{The group of isometries of a Finsler space},
	Pacific J. Math. \textbf{207} (2002), 149--155.
	
	\bibitem{GJ} M. I. Garrido and J. A. Jaramillo,
	\emph{Variations on the Banach-Stone theorem},
	Extracta Math. \textbf{17} (2002), 351--383.
	
	\bibitem{GJR-10} M. I. Garrido, J. A. Jaramillo and Y. Rangel,
	\emph{Lip-density and algebras of Lipschitz functions on metric spaces},
	Extracta Math. \textbf{25} (2010), 249--261.
	
		
	\bibitem{GJR-13}
	M. I. Garrido, J. A. Jaramillo and Y. Rangel,
	\newblock \emph{Smooth approximation of {L}ipschitz functions on {F}insler manifolds.}
	\newblock J. Func. Spaces Appl. \textbf{2013} (2013), 10p. 		
	
	
	
	\bibitem{JLP} M.A. Javaloyes, L. Lichtenfelz and P. Piccione,
	\emph{Almost isometries of non-reversible metrics with applications to stationary spacetimes},
	J. Geom. Phys. \textbf{89} (2015) 38--49.
	
	\bibitem{M} S. B. Myers,
	\emph{Algebras of differentiable functions},
	Proc. Amer. Math. Soc. \textbf{5} (1954), 917--922.
	
	\bibitem{MS} S. B. Myers and N. E. Steenrod,
	\emph{The group of isometries of a Riemannian manifold},
	Ann. of Math. \textbf{(2) 40} (1939), 400--416.
	
		
	\bibitem{N} M. Nakai,
	\emph{Algebras of some differentiable functions on Riemannian manifolds},
	Japan. J. Math. \textbf{29} (1959),  60--67.

	



	
\end{thebibliography}
\end{document}